\theoremstyle{plain}
\newtheorem{theorem}{Theorem}[section]
\newtheorem{proposition}[theorem]{Proposition}
\newtheorem{corollary}[theorem]{Corollary}
\newtheorem{lemma}[theorem]{Lemma}
\theoremstyle{definition}
\newtheorem{definition}[theorem]{Definition}
\newtheorem{remark}{Remark}
\theoremstyle{remark}
\newcommand{\twopiece}[5][0]{% use option [1] if display is desired; default [0] is not in display
    \ifcase#1
        \left\{\begin{array}{ll}{#2}&{\text{ if } #3}\\{#4}&{\text{ if } #5}\end{array}\right.
    \else
        \left\{\begin{array}{ll}{#2}&{\text{ if } #3}\vspace{#1pt}\\{#4}&{\text{ if } #5}\end{array}\right.
\fi}
\newcommand{\E}{{\mathbb E}}
\renewcommand{\P}{{\mathbb P}}
\newcommand{\A}{{\mathcal{A}}}
\newcommand{\one}{{\mathbbm{1}}}
\newcommand{\T}{{\mathcal{T}}}
\newcommand{\lin}{{\mathcal{L}}}
\newcommand{\lf}{{\mathfrak{L}}}
\newcommand{\Ps}{{\mathcal{P}}}
\newcommand{\F}{{\cal F}}
\newcommand{\samp}{{\mathcal{X}}}
\newcommand{\ph}{{\mathfrak{p}}}
\newcommand{\qh}{{\mathfrak{q}}}
\newcommand{\smud}{\mathcal{P}_{\mathrm{SMU}}(d)}
\newcommand{\smumle}{\hat{p}_{n, d}^{\mathrm{SMU}}}
\newcommand{\helconst}{32}
\newcommand{\classunidec}{\mathcal{P}(1)}
\newcommand{\ben}{\begin{enumerate}}
\newcommand{\een}{\end{enumerate}}
\def\qt#1{\qquad\text{#1}}
\def\argmax{\mathop{\rm argmax}}
\begin{document}
\begin{frontmatter}
\title{Convergence rates for estimating multivariate scale mixtures of uniform densities}
%\title{A sample article title with some additional note\thanksref{t1}}
\runtitle{SMU density estimation rates}
%\thankstext{T1}{A sample additional note to the title.}

\begin{aug}
%%%%%%%%%%%%%%%%%%%%%%%%%%%%%%%%%%%%%%%%%%%%%%%
%% Only one address is permitted per author. %%
%% Only division, organization and e-mail is %%
%% included in the address.                  %%
%% Additional information can be included in %%
%% the Acknowledgments section if necessary. %%
%% ORCID can be inserted by command:         %%
%% \orcid{0000-0000-0000-0000}               %%
%%%%%%%%%%%%%%%%%%%%%%%%%%%%%%%%%%%%%%%%%%%%%%%
\author[A]{\fnms{Arlene K. H.}~\snm{Kim}\ead[label=e1]{arlenent@korea.ac.kr}},
\author[B]{\fnms{Gil}~\snm{Kur}\ead[label=e2]{gil.kur@inf.ethz.ch}}
\and
\author[C]{\fnms{Adityanand}~\snm{Guntuboyina}\ead[label=e3]{aditya@stat.berkeley.edu}\orcid{0009-0000-2674-4065}}  
%%%%%%%%%%%%%%%%%%%%%%%%%%%%%%%%%%%%%%%%%%%%%%
%% Addresses                                %%
%%%%%%%%%%%%%%%%%%%%%%%%%%%%%%%%%%%%%%%%%%%%%%
\address[A]{Department of Statistics,
Korea University\printead[presep={,\ }]{e1}}

\address[B]{Institute for Machine Learning,
ETH Z\"urich\printead[presep={,\ }]{e2}}

\address[C]{Department of Statistics,
University of California at Berkeley\printead[presep={,\ }]{e3}}
\runauthor{A. K. H. Kim, G. Kur and A. Guntuboyina}
\end{aug}

\begin{abstract}
    The Grenander estimator is a well-studied
  procedure for univariate nonparametric density estimation. It is
  usually defined as the Maximum Likelihood Estimator (MLE) over the class
  of all non-increasing densities on the positive real line. It can
  also be seen as the MLE over the class of
  all scale mixtures of uniform densities. Using the latter viewpoint,
  Pavlides and Wellner~\cite{pavlides2012nonparametric} proposed a
  multivariate extension of the  
  Grenander estimator as the nonparametric MLE over the class of all
  multivariate scale mixtures of uniform densities. We prove that this
  multivariate estimator achieves the univariate cube root rate of
  convergence with only a logarithmic multiplicative factor that
  depends on the dimension. The usual curse of dimensionality is 
  therefore avoided to some extent for this multivariate
  estimator. This result positively resolves a conjecture of
  Pavlides and Wellner~\cite{pavlides2012nonparametric} under an
  additional lower bound   assumption. Our proof proceeds via a
  general accuracy result for the   Hellinger accuracy of MLEs over
  convex classes of densities. We also provide algorithms for
  computing the estimator, and illustrate performance on real and
  simulated datasets. 
\end{abstract}

\begin{keyword}[class=MSC]
\kwd[Primary ]{62G07}
\end{keyword}

\begin{keyword}
\kwd{minimax rate}
\kwd{density estimation}
\kwd{Hellinger distance}
\kwd{curse of dimensionality}  
\kwd{mixture model}
\kwd{nonparametric maximum likelihood estimator (NPMLE)}
\kwd{shape-constrained inference}
\end{keyword}

\end{frontmatter}
%%%%%%%%%%%%%%%%%%%%%%%%%%%%%%%%%%%%%%%%%%%%%%
%% Please use \tableofcontents for articles %%
%% with 50 pages and more                   %%
%%%%%%%%%%%%%%%%%%%%%%%%%%%%%%%%%%%%%%%%%%%%%%
% \tableofcontents

\section{Introduction}
The Grenander estimator \cite{grenander1956theory}  is a popular procedure
for univariate nonparametric density estimation. Given positive observations
$x_1, \dots, x_n$ for some $n \geq 2$, the Grenander estimator
$\hat{p}_n$ is defined as the Maximum Likelihood Estimator (MLE) over
the class of all  nonincreasing densities on $(0, \infty)$. More precisely
\begin{equation*}
  \hat{p}_n := \argmax_{p \in \classunidec} \frac{1}{n} \sum_{i=1}^n \log p(x_i)
\end{equation*}
where $\classunidec$ is the class of all univariate density functions on the
 positive real line $(0, \infty)$
which are nonincreasing. Basic properties of the Grenander estimator
(including existence, uniqueness, efficient computation as well as
applications) can be found in the books
\cite{groeneboom2014nonparametric} and \cite{BBBB72}.

The Grenander estimator can also be seen as the MLE over the class of
scale mixtures of uniform densities. More specifically, consider the
class $\mathcal{P}_{\mathrm{SMU}}(1)$  consisting of all densities $p$ on $(0, \infty)$
that can be written, for every $u > 0$,  as
\begin{equation}\label{smu1}
  p(u) := \int_0^{\infty} p_{\mathrm{Unif(0, \theta]}}(u) dG(\theta)
  = \int_0^{\infty} \frac{\one\{u \leq \theta\}}{\theta} dG(\theta)
\end{equation}
for some probability measure $G$ on $(0, \infty)$. Here
$p_{\mathrm{Unif(0, \theta]}}(u) := \theta^{-1}\one\{u \leq \theta\}$ is
the uniform density on $(0, \theta]$. A density of the form
\eqref{smu1} is referred to as a scale mixture of uniform densities
because the mixture is over the scale parameter $\theta$ (the
subscript SMU in $\mathcal{P}_{\mathrm{SMU}}(1)$ refers to ``Scale
Mixture of Uniform''). The 
Grenander estimator maximizes likelihood over $\mathcal{P}_{\mathrm{SMU}}(1)$ 
because $\mathcal{P}_{\mathrm{SMU}}(1)$ and
$\classunidec$ are essentially the same density function
class. Indeed, it is easy to see that $\mathcal{P}_{\mathrm{SMU}}(1) \subseteq
\classunidec$ and, conversely, every density in $\classunidec$ that is
also upper semi-continuous belongs to $\mathcal{P}_{\mathrm{SMU}}(1)$ (see
\cite{williamson1956multiply} for a proof). 

Many authors have studied theoretical convergence properties of
$\hat{p}_n$ under the assumption that the observations $x_1, \dots, x_n$ are realizations of
independent random variables $X_1,\dots, X_n$ having a common density
$p_0 \in \mathcal{P}_{\mathrm{SMU}}(1)$. In this case,
$\hat{p}_n$ is a decently accurate estimator of $p_0$, especially when $n$ is
large. More precisely, it is well-known that the risk of $\hat{p}_n$
under the squared Hellinger loss function, defined for two densities
$p$ and $q$ as:
\begin{equation}
  \label{helldef}
h^2(p, q) := \int (\sqrt{p} - \sqrt{q})^2
\end{equation}
converges to zero at the
rate $n^{-2/3}$ under mild additional assumptions on $p_0$ (see e.g.,
\cite[Theorem 7.12]{VandegeerBook}; these mild  
additional assumptions will be satisfied if, for example,
$p_0$ is bounded from above and has compact support). Similar results
exist for the total variation loss function (see e.g.,
\cite{Birge89}):
\[
TV(p,q) := \int |p-q|,
\]
as well as for the convergence of $\hat{p}_n(x_0)$
to $p_0(x_0)$ for fixed points $x_0$ (see e.g., \cite[Chapter
3]{groeneboom2014nonparametric}). The rate $n^{-2/3}$ cannot be
improved in a minimax sense (see e.g., \cite{birge1987estimating,
  Birge87, G85}) although when $p_0 \in \mathcal{P}_{\mathrm{SMU}}(1)$ is piecewise
constant with a finite number of constant pieces, the rate of
convergence of $\hat{p}_n$ to $p_0$ is parametric (i.e., $n^{-1}$)
upto logarithmic factors in the squared Hellinger distance (see
\cite[Page 113]{VandegeerBook}; analogous results for the total
variation distance can be found in \cite{Birge89}).

Our paper studies rates of convergence for a multivariate extension of
the Grenander estimator that was originally proposed and studied by
Pavlides and Wellner \cite{pavlides2012nonparametric} (henceforth, we
shall use PW to refer to the paper
\cite{pavlides2012nonparametric}). For a fixed $d \geq 1$,  
PW defined the class $\smud$ consisting
of  all densities $p$ on $(0, \infty)^d$ that can be written, for
every $u_1, \dots, u_d > 0$, as  
\begin{equation}\label{pwsmudef}
  \begin{split}
  p(u_1, \dots, u_d) = \int_0^{\infty}  \dots
  \int_0^{\infty} p_{\mathrm{Unif(0, 
      \theta_1]}}(u_1) \dots
  p_{\mathrm{Unif(0, \theta_d]}}(u_d) dG(\theta_1,  \dots,
    \theta_d)
  \end{split}  
\end{equation}
for some probability measure $G$ on $(0,
\infty)^d$. PW argued that $\smud$  is a natural multivariate analog of the
univariate class $\mathcal{P}_{\mathrm{SMU}}(1)$. For the multivariate
density estimation problem where the goal is to fit a density to
observations $x_1, \dots, x_n$ in $(0, \infty)^d$, PW 
studied the MLE over $\smud$ : 
\begin{equation}\label{smumle_def}
\smumle := \argmax_{p \in \smud} \frac{1}{n}
  \sum_{i=1}^n \log p(x_i). 
\end{equation}
PW  proved several important properties
of $\smumle$ including 
existence, almost sure uniqueness, and characterizations. Under the 
standard modeling 
assumption that the data points $x_1, \dots, x_n$ are realizations of
random variables 
\begin{equation*}
  X_1, \dots, X_n \overset{\text{i.i.d}}{\sim} p_0 \qt{with $p_0 \in
    \smud$}, 
\end{equation*}
PW also studied the performance of $\smumle$
as an estimator for $p_0$. Among other results, they proved that
$\smumle$ is a strongly
consistent estimator of $p_0$ in both the total variation and
Hellinger loss functions.

PW also made an interesting but unproved observation on the rate of
convergence of $\smumle$ to $p_0$ in the Hellinger distance. The main
motivation for the present paper is to rigorously prove this
conjecture which appeared as Conjecture 2 in \cite[Section
5]{pavlides2012nonparametric}, and states the following: suppose $p_0
\in \smud$ is bounded from above by a constant and is concentrated on
$[0,   M]^d$ for some constant $M$, then  
\begin{equation}\label{pwconj}
  h^2(\smumle, p_0)  = O_p(n^{-2/3} (\log n)^{\gamma_d})  
\end{equation}
for some $\gamma_d$ depending on the dimension $d$ alone. 
%Here $h^2(\cdot,
%\cdot)$ stands for the squared Hellinger distance defined as
%\begin{equation}
%  \label{helldef}
%h^2(p, q) := \int (\sqrt{p} - \sqrt{q})^2
%\end{equation}
%for two densities $p$ and $q$. 
The same conjecture \eqref{pwconj} was
also stated in \cite[Section 5.3]{gao2013global}.   

Assertion \eqref{pwconj} is interesting mainly because the rate
$n^{-2/3} (\log n)^{\gamma_d}$  is quite close to the univariate rate of
$n^{-2/3}$ achieved by the Grenander estimator. Indeed, it is only
inferior by the logarithmic multiplier $(\log n)^{\gamma_d}$. The
curse of dimensionality which plagues most multidimensional estimation
procedures is therefore much milder for the multivariate extension
$\smumle$ of the Grenander estimator. Alternative multivariate
extensions of the Grenander estimator such as the MLE over ``block
decreasing'' densities over $(0,
\infty)^d$ admit convergence rates that are adversely affected by the curse of
dimensionality. Indeed, the minimax rate over ``block decreasing'' densities
was shown in \cite{biau2003risk} to be $n^{-2/(d+2)}$ in the squared total
variation distance and this rate is clearly much slower than the right
hand side of \eqref{pwconj} for $d \geq 2$.

Insight into the fast convergence rate in \eqref{pwconj} can be
obtained by noting the fact that the number of 
constraints imposed by the class $\smud$ on its member densities
increases significantly with the dimension $d$. More precisely, it can
be shown (using, for example, \cite[Theorem
  2.3]{pavlides2012nonparametric}) that, in order to belong to the
  class $\smud$, a smooth density $p$ on $(0, \infty)^d$ needs to
  satisfy the constraints:  
\begin{equation}\label{smupd}
(-1)^{|S|}  \frac{\partial^{|S|}p}{\prod_{i\in S} \partial x_i} \geq 0
\qt{for every $\emptyset \neq S \subseteq \{0, 1\}^d$}, 
\end{equation}
where $|S|$ denotes the cardinality of the subset $S$. Thus partial derivatives of up to order $d$ are
constrained by the class $\smud$ and, moreover, the number of
constraints is increasing exponentially in $d$. This is intuitively
the reason why the convergence rates for $\smumle$ do not suffer from
the usual curse of dimensionality. For comparison, note that the class of 
block-decreasing densities  (\cite{robertson1967estimating,
  sager1982nonparametric,   polonik1998silhouette,   biau2003risk})
imposes only the significantly weaker conditions
\begin{equation}\label{bdpd} 
\frac{\partial p}{\partial x_i}   \geq 0 \qt{for every $i = 1, \dots,
  d$}.  
\end{equation}
The constraint in \eqref{smupd} is similar to the notion of Entire
Monotonicity \cite{aistleitner2015functions, hobson1921theory,
  leonov1996total, young1924discontinuties} which has been used as a
shape constraint for nonparametric regression in
\cite{fang2021multivariate}.  More generally, $L_p$ norm constraints
on mixed derivatives similar to those appearing in \eqref{smupd}  have
been used for nonparametric regression by many authors (see e.g.,
\cite{donoho2000high, lin2000tensor, fang2021multivariate,
  ki2021mars, benkeser2016highly}) and these procedures often achieve
rates  similar to \eqref{pwconj} avoiding the usual curse of
dimensionality. On
the other hand, nonparametric regression with monotonicity constraints
similar to \eqref{bdpd} has been studied in \cite{han2019isotonic}.  

\color{black}
We now describe our results. Our main result is Corollary \ref{conjlowbou} which  proves \eqref{pwconj} with
$\gamma_d = 4d-2$ for $d \geq 2$, under the following
assumptions:
\begin{enumerate}
\item \textbf{Compact Support (CS)}: $p_0 \in \smud$ is concentrated on
  $[0, M]^d$ for a positive constant $M$, 
\item \textbf{Upper Bound (UB)}: $p_0$ is bounded from above on $[0, M]^d$
  by a positive constant $B$, 
\item \textbf{Lower Bound (LB)}: $p_0$ is bounded from below on $[0, M]^d$
  by a positive constant $b$. 
\end{enumerate}
The first two assumptions were also made by PW while stating their
conjecture \eqref{pwconj}. The third assumption is an additional one
that is needed for our proof of \eqref{pwconj}. Although we are unable to remove the LB assumption completely, we have been able to prove results which weaken it to some extent by relaxing it to hold on subrectangles of the full domain $[0, M]^d$ and also by replacing it with conditions on the
$L_{\qh}$ norm of $p_0^{-1}$ for a fixed $\qh \in (1, \infty)$ (see Theorem \ref{jrec}, Corollary \ref{conjecture}, and Corollary \ref{finitelq}). 
 
\color{black}

Our proofs proceed via a new result,
Theorem \ref{hellexp}, which gives Hellinger distance bounds for 
the MLE over an arbitrary convex class of densities $\Ps$. It reduces
the problem of obtaining Hellinger rates for the MLE to that of
obtaining upper bounds for the function:    
\begin{equation} \label{expsup}
t \mapsto  \E \sup_{p \in \mathcal{P} : h(p, p_0) \leq t} \int
\frac{4p_0}{p_0 + p} d(P_0 - P_n),  
\end{equation}
where $P_0$ is the probability distribution with density $p_0$ and
$P_n$ is the empirical distribution of the samples $X_1, \dots,
X_n$. Theorem \ref{hellexp} appears to be new and can be seen as a
maximum likelihood analogue of the result of Chatterjee
\cite{chatterjee2014new} 
for least squares estimators under convex constraints. While our focus
is on the case $\Ps = \smud$, Theorem \ref{hellexp} is applicable for
any convex class of densities $\Ps$. In order to obtain upper bounds
for \eqref{expsup} when $\Ps = \smud$, we use available bracketing
entropy bounds for distribution functions of nonnegative measures from
Gao \cite{gao2013book}. The connection between densities in $\smud$
and distribution functions of nonnegative measures is explained in
Section \ref{sec:bracken} (see \eqref{altrep}). 

We also provide a minimax lower bound (Theorem \ref{lowerbound}) which
proves that the logarithmic 
factor in \eqref{pwconj} cannot be removed
completely. Specifically, we prove that the minimax risk in squared
Hellinger distance over the class of densities in $\smud$ that are
bounded (from above by $B$ and below by $b$) and are supported on $[0,
M]^d$ is at least by a constant multiple of $n^{-2/3} (\log
n)^{(d-1)/3}$ (as long as $B$ and $M$ are large enough
constants and $b$ is a small enough constant). This obviously implies
that $\gamma_d$ in \eqref{pwconj} 
has to be at least $(d-1)/3$ (on the other hand, the upper bound on
$\gamma_d$ from our Theorem \ref{conjlowbou} is
$4d-2$). 

In Theorem \ref{adaptation}, we also prove that the rate of
convergence of $\smumle$ to $p_0 \in 
\smud$ can be much faster than \eqref{pwconj} when $p_0$ is piecewise 
constant over a finite set of rectangles in
$(0,\infty)^d$. Specifically, if the support of $p_0$ can be 
decomposed into $m$ rectangles that are nearly disjoint (in the sense
that their pairwise intersections have zero volume) such that $p_0$ is
constant on each rectangle, then
\begin{equation}\label{adapresult}
    h^2(\smumle, p_0)  = O_p\left(\frac{m}{n} (\log n)^{\tilde \gamma_d}
    \right),   
  \end{equation}
where $\tilde \gamma_d = 8(2d-1)/3$ 
which implies that the   rate of
convergence of $\smumle$ to $p_0$ is faster than the worst case upper
bound given by \eqref{pwconj} when $m$ is of smaller order than
$n^{1/3}$. In the univariate case (i.e., for the Grenander estimator),
such results can be found in \cite[Page 113]{VandegeerBook} and
\cite{Birge89}). 

We also discuss algorithms for computing $\smumle$. In Section \ref{sec:comp}, we discuss an exact algorithm (see Algorithm \ref{alg:smu_mle}) for computing $\smumle$, and also an approximate algorithm (see Algorithm \ref{alg:smu_mle_approx}) which is more computationally efficient. We illustrate the performance of the estimator on one simulated dataset and one real dataset involving bivariate $p$-values. 

The rest of the paper is organized as follows. Our general
result connecting the Hellinger accuracy of an MLE over a convex class of
densities to the expected supremum in \eqref{expsup} is stated in
Section \ref{sec:general}. This
result is crucially used with $\Ps = \smud$ to prove our Hellinger accuracy
results for $\smumle$. In Section \ref{sec:bracken}, we state
bracketing entropy results for subclasses of $\smud$ that are
necessary for proving our Hellinger accuracy results for
$\smumle$. Our main results are given in Section \ref{sec:smuresults}. Section \ref{sumdisc} has additional
discussion of issues relevant to our main results. Section \ref{sec:comp} discusses computational details. 
The proofs of the main
results are in Section \ref{proofs}, while Section \ref{additechres}
contains additional technical results and proofs.  

\section{Hellinger Accuracy of MLEs over convex classes of
  densities}\label{sec:general}
This section describes a general result for the Hellinger accuracy of
the MLE over a convex class of densities. Let $\Ps$ be a convex class
of densities on some common domain. Given $X_1, \dots, X_n$  generated
according to a true density $p_0 \in \Ps$, consider any MLE over $\Ps$
defined as  
\begin{equation*}
  \hat{p}_n \in \argmax_{p \in \Ps} \frac{1}{n} \sum_{i=1}^n \log
  p(X_i). 
\end{equation*}
We assume that $\hat{p}_n$ exists. The following result gives upper
bounds for the squared Hellinger distance $h^2(\hat{p}_n,
p_0)$. It will be used with $\Ps = \smud$ to prove
our Hellinger rate results for $\smumle$. 

\begin{theorem}\label{hellexp}
 Consider the setting described above. For $t \geq 0$, let
  \begin{equation}\label{gdef}
    G(t) := \sup_{p \in \Ps : h(p_0, p) \leq t} \int
    \frac{4p_0}{p_0 + p} d(P_0 - P_n)
  \end{equation}
  where $P_0$ is the probability measure corresponding to the true
  density $p_0$ and $P_n$ is the empirical distribution of $X_1,
  \dots, X_n$. All expectations below are with respect to
  $P_0$. Suppose there exist two real numbers $t_0 > 0$ and $0 < 
  \eta \leq 1$, and a function $\bar{G}: [0, \infty)
  \rightarrow [0, \infty)$ such that  
  \begin{enumerate}
  \item $\E G(t) \leq \bar{G}(t)$ for every $t \geq t_0$, 
  \item  $\bar{G}(t_0) \leq t_0^2$, and
  \item  $t \mapsto \frac{\bar{G}(t)}{t^{2 - \eta}}$ is non-increasing
    on $[t_0, \infty)$.  
  \end{enumerate}
  Then 
  \begin{equation}\label{hellexp.eq}
    \P \left\{h(\hat{p}_n, p_0) \geq t_0 + x \right\} \leq \exp
    \left(\frac{-n \eta^2 x^2}{\helconst} \right) \qt{for every
      $x > 0$}
  \end{equation}
  and
  \begin{align}\label{hellexp.exp}
    \E h^2(\hat{p}_n, p_0) \leq 2 t_0^2 + \frac{\helconst}{n \eta^2}. 
  \end{align}
\end{theorem}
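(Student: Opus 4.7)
The plan follows the Chatterjee framework for convex-constrained estimators, adapted to the maximum likelihood setting via a Hellinger ``basic inequality'' in the style of Birg\'e--Massart and van de Geer. I would proceed in three steps: establishing a basic inequality bounding $h^2(\hat p_n, p_0)$ by the empirical process $G$, obtaining McDiarmid concentration for $G(t)$ at each fixed scale, and combining these via a peeling argument.

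For the basic inequality, convexity of $\Ps$ ensures that the midpoint $\bar p_n := (\hat p_n + p_0)/2$ lies in $\Ps$, and MLE optimality of $\hat p_n$ yields $\int \log(\hat p_n/\bar p_n)\, dP_n \geq 0$. Applying the elementary bound $\log y \leq y - 1$ to $y = \hat p_n/\bar p_n$ simplifies this to $\int 2p_0/(p_0+\hat p_n)\, dP_n \leq 1$. Transferring one integral to $P_0$ and using $\int (p_0 - \hat p_n)\, d\lambda = 0$ rewrites the resulting inequality as
\[
\int \frac{(p_0 - \hat p_n)^2}{2(p_0 + \hat p_n)}\, d\lambda \;\leq\; \int \frac{2p_0}{p_0 + \hat p_n}\, d(P_0 - P_n),
\]
whose left side is at least $\tfrac12 h^2(p_0, \hat p_n)$ by the pointwise inequality $(a-b)^2/(a+b) \geq (\sqrt a - \sqrt b)^2$ (which follows from $(\sqrt a + \sqrt b)^2 \geq a + b$). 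This gives the basic inequality $h^2(p_0, \hat p_n) \leq G(h(p_0, \hat p_n))$, in the spirit of Chatterjee's analysis for constrained least squares.

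For concentration, each integrand $4p_0/(p_0 + p)$ takes values uniformly in $[0, 4]$, so changing a single sample $X_i$ shifts $G(t)$ by at most $4/n$. McDiarmid's bounded-differences inequality therefore yields $\P(G(t) - \E G(t) \geq y) \leq \exp(-n y^2/8)$ at each fixed $t$, and combining with the hypothesis $\E G(t) \leq \bar G(t)$ controls the deviation of $G$ from $\bar G$ in probability at each scale.

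Finally, for the peeling, the hypotheses on $\bar G$ combine with concavity of $t \mapsto t^\eta$ to produce the tangent-line bound $\bar G(t) \leq t^2 - \eta t(t - t_0)$ on $[t_0, \infty)$, i.e., $t^2 - \bar G(t) \geq \eta t (t - t_0)$. Combined with the basic inequality, on the event $\{h(\hat p_n, p_0) \geq t_0 + x\}$ this forces $G(h) - \bar G(h) \geq \eta (t_0 + x) x$. Decomposing the event into suitably spaced shells in $h$, invoking the McDiarmid tail at each level with monotonicity of $G$ in $t$, and union-bounding across peels yields \eqref{hellexp.eq}; the expectation bound \eqref{hellexp.exp} follows by integrating the tail. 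The main technical obstacle is calibrating the peeling so that the $\eta^2$ factor survives in the exponent of \eqref{hellexp.eq} with the precise constant $\helconst = 32$: this requires spacing the shells finely enough that the deviation lower bound scales linearly in $\eta$ on each shell (so that squaring in the Hoeffding tail produces $\eta^2$), while keeping the total number of peels controlled so that the union bound contributes at most a constant overhead and does not inflate the exponential constant.
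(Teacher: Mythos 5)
Your derivation of the basic inequality $h^2(p_0,\hat p_n)\le G(h(p_0,\hat p_n))$ is a correct variant of the paper's (you use the midpoint $\bar p_n$ and $\log y\le y-1$; the paper uses the directional derivative of the log-likelihood plus a convexity step), and the $8/n$ McDiarmid constant is computed correctly. However, two things go wrong.

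First, and critically, McDiarmid's inequality cannot deliver the stated exponent. McDiarmid gives $\P\{G(t)-\E G(t)\ge y\}\le\exp(-ny^2/8)$ with no dependence on the scale $t$. With $y\approx\eta(t_0+x)x$ this yields $\exp\bigl(-n\eta^2(t_0+x)^2x^2/8\bigr)$, which is strictly weaker than the claimed $\exp(-n\eta^2x^2/32)$ whenever $t_0+x<1/2$ — precisely the regime that matters (Hellinger distance is at most $\sqrt 2$, and in the application $t_0\sim n^{-1/3}$). Integrating your tail gives $\int_0^\infty 2x\,e^{-n\eta^2(t_0+x)^2x^2/8}dx\gtrsim\min(n^{-1/2},\,1/(n t_0^2))$, which is of order $n^{-1/3}$ when $t_0\sim n^{-1/3}$, destroying the $n^{-2/3}$ rate. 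The paper instead invokes Bousquet's inequality, which has a variance term in the denominator; since $\mathrm{var}\bigl(4p_0/(p_0+p)\bigr)\lesssim t^2$ when $h(p,p_0)\le t$, the $(t_0+x)^2$ coming from $y^2$ cancels against the variance, producing the scale-free $\exp(-n\eta^2x^2/32)$. This variance-adaptation is essential and cannot be recovered by bounded-differences alone.

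Second, you rely on the weak basic inequality $\hat t^2\le G(\hat t)$ at the random scale $\hat t=h(\hat p_n,p_0)$ and then peel. The paper avoids peeling entirely by proving a strictly stronger statement, $s^2\le G(s)$ for \emph{every} $s\in[0,\hat t]$, via a second convexity argument: if $G(s)<s^2$ for some $s<\hat t$, consider the mixture $(1-\alpha_s)p_0+\alpha_s\hat p_n$ with Hellinger distance $s$ from $p_0$ and derive a contradiction with the KKT condition $\int 2p_0/(p_0+\hat p_n)\,dP_n\le 1$. This yields directly $\P\{\hat t\ge t_0+x\}\le\P\{G(t_0+x)\ge(t_0+x)^2\}$, a single fixed-scale event. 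Even if you switched to Bousquet, a peeling/union bound over shells would introduce further slack, so this strengthening is not just elegant — it is how the clean constant $\helconst=32$ is obtained.
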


In order to apply Theorem \ref{hellexp}, we need to bound the
expectation of \eqref{gdef} from above. For this, our main tool will
be the following standard 
bound from \cite[Theorem 19.36]{vaart98book} on the expected supremum
of an empirical process. \color{black} This result uses the definition of bracketing
numbers (see e.g. \cite[Definition 2.1.6]{vaart98book}). \color{black}

%\color{black}
%\begin{definition}[Bracketing numbers]\label{bracketdef}
%Let $\F$  be a class of functions on some space $\samp$ and
%let $\rho$ be a pseudometric on $\F$ (in the result below, $\rho$ will
%be the $L_2$ metric with respect to a probability  measure $P_0$ on
%$\samp$). The $\epsilon$-bracketing number of $\F$  with respect to
%the pseudometric $\rho$ will be denoted by $N_{[]}(\epsilon, \F,
%\rho)$ and is defined as the smallest positive integer $M$ for which
%there exist $M$ pairs of functions $(f_{L, 1}, f_{U, 1}), \dots,
%(f_{L, M}, f_{U, M})$ such that $\rho(f_{L, j}, f_{U, j}) \leq \epsilon$ for each
%$j = 1, \dots, M$  and such that for every $f \in \F$, there exists $j
%:= j(f) \in \{1, \dots, M\}$ with $f_{L, j}(x) \leq f(x) \leq f_{U,
%  j}(x)$  for every $x \in \samp$. We shall refer to the logarithm of
%$N_{[]}(\epsilon, \F, \rho)$ as the $\epsilon$-bracketing entropy of
%$\F$ with respect to the pseudometric $\rho$. 
%\end{definition}
%\color{black}

%\color{black}
%DP pollard paper citation? Ossiander bound
%\color{black}
\begin{theorem}[\cite{Ossiander87bracket} and Theorem 19.36 of
  \cite{vaart98book}]\label{expcontrol}
Let $X_1, \dots, X_n$ be i.i.d taking values in a space $\samp$ with
distribution $P_0$.  Suppose $\F$ is a class of functions on $\samp$
that are uniformly bounded by $M$ and such that $\sup_{f \in \F} \E
f^2(X_1) \leq \delta^2$ for some fixed $\delta > 0$. Let
\begin{equation}\label{braentint}
  J(\delta) := \int_0^{\delta} \sqrt{\log N_{[]}(\epsilon, \F,
    L_2(P_0))} d\epsilon. 
\end{equation}
Then
\begin{equation*}
  \E \sup_{f \in \F} |P_n f - P_0 f| \leq \frac{C}{\sqrt{n}} J(\delta)
  \left(1 + \frac{M J(\delta)}{\delta^2 \sqrt{n}} \right)
\end{equation*}
for a universal constant $C$. 
\end{theorem}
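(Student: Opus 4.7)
The plan is a bracketing chaining argument of the Ossiander/Pollard type, using Bernstein's inequality to control chain increments. Set $\epsilon_k = \delta 2^{-k}$ and, for each $k\geq 0$, take a minimal $\epsilon_k$-bracketing of $\F$ in $L_2(P_0)$ of cardinality $N_k = N_{[]}(\epsilon_k,\F,L_2(P_0))$. For each $f\in\F$, let $\pi_k f$ denote the upper function of a bracket at level $k$ containing $f$; then $\pi_k f$ takes at most $N_k$ distinct values, is bounded by $M$, and $\|\pi_k f - f\|_{L_2(P_0)} \leq \epsilon_k$. One then telescopes
\[
(P_n - P_0)f = (P_n - P_0)\pi_0 f + \sum_{k=0}^{K-1}(P_n - P_0)(\pi_{k+1}f - \pi_k f) + (P_n - P_0)(f - \pi_K f)
\]
and takes suprema over $f\in\F$ term by term.

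Each link $g = \pi_{k+1}f - \pi_k f$ satisfies $\|g\|_\infty \leq 2M$ and $\|g\|_{L_2(P_0)}\leq 3\epsilon_k$, and ranges over at most $N_k N_{k+1}$ distinct functions as $f$ varies. Bernstein's inequality together with a union bound gives
\[
\P\Bigl\{\sup_f \bigl|(P_n-P_0)(\pi_{k+1}f-\pi_k f)\bigr| > x\Bigr\} \leq 2N_k N_{k+1}\exp\!\Bigl(-\frac{nx^2/2}{9\epsilon_k^2 + 2Mx/3}\Bigr),
\]
from which integration of the tail produces an expectation bound on the $k$-th link of order $\epsilon_k\sqrt{\log N_{k+1}/n} + M\log N_{k+1}/n$. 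Choosing $K$ so that $\epsilon_K \lesssim 1/\sqrt n$ makes the residual negligible via a direct Bernstein estimate with variance $\epsilon_K^2$, and the base term $(P_n-P_0)\pi_0 f$ is absorbed into the first chaining level (the condition $\|\pi_0 f\|_{L_2(P_0)} \leq 2\delta$ follows from the triangle inequality).

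Summing over $k$ completes the proof. The sub-Gaussian contributions $\sum_k \epsilon_k\sqrt{\log N_{k+1}/n}$ telescope into a Riemann sum for the bracketing integral and yield a term of order $J(\delta)/\sqrt n$. The sub-exponential contributions $(M/n)\sum_k \log N_{k+1}$ are controlled via the elementary inequality $\sqrt{\log N_k}\leq J(\delta)/\epsilon_k$ (a consequence of the monotonicity of $\log N_{[]}(\cdot,\F,L_2(P_0))$ applied inside the definition of $J(\delta)$); a careful weighting across levels yields a bound of order $M J(\delta)^2/(\delta^2 n)$. Factoring out $J(\delta)/\sqrt n$ from the two contributions produces the advertised multiplicative correction $1 + M J(\delta)/(\delta^2 \sqrt n)$. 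The principal obstacle is precisely this last bookkeeping step: the per-level deviation thresholds $x_k$ must be allocated so that the Bernstein sub-exponential terms sum to exactly $M J(\delta)^2/(\delta^2 n)$ rather than to some larger quantity. This is the distinctive and delicate feature of bracketing chaining with Bernstein increments, in contrast to Dudley's purely sub-Gaussian chaining via Hoeffding, which would not produce the correct additional term.
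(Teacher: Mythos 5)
The paper does not prove this theorem: it is stated as a known result and attributed to Ossiander and to van der Vaart (Theorem 19.36), so there is no internal proof for your attempt to be compared against. Judged on its own merits, your sketch correctly identifies the skeleton of a bracketing chaining argument, but the step you flag as ``the principal obstacle'' is in fact the entire substance of the theorem, and the plan as written does not close it.

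Concretely, the naive allocation you describe fails. With $\epsilon_k = \delta 2^{-k}$ and the bound $\sqrt{\log N_{[]}(\epsilon_k)} \leq J(\delta)/\epsilon_k$, the sub-exponential Bernstein contributions sum as $\frac{M}{n}\sum_{k\leq K}\log N_{k+1} \leq \frac{M J(\delta)^2}{n}\sum_{k\leq K}\epsilon_{k+1}^{-2} \asymp \frac{M J(\delta)^2}{n\delta^2}4^{K}$, so if you run the chain down to $\epsilon_K \asymp 1/\sqrt n$ (as you propose, to make the residual negligible) you get $4^K \asymp n\delta^2$ and the sub-exponential term becomes of order $M J(\delta)^2$, which is $n\delta^2$ times too large. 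Simply ``allocating thresholds carefully'' within the telescoping $(P_n-P_0)(\pi_{k+1}f-\pi_k f)$ does not repair this. The actual proof in van der Vaart (Lemma 19.34 feeding into Theorem 19.36) uses a different and sharper device: at each level $q$ one introduces truncation indicators comparing the bracket width $\Delta_q f = u_q f - l_q f$ to a threshold $\sqrt n\, a_q$ with $a_q = \epsilon_q/\sqrt{\log N_{q+1}}$, and decomposes $f - \pi_{q_0}f$ into a sum of products of chain increments with these indicators. This makes the effective sup-norm of each increment of order $\sqrt n\, a_q$ rather than $M$, which is what tames the Bernstein sub-exponential term and produces the correct factor $1 + M J(\delta)/(\delta^2\sqrt n)$. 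The residual beyond the stopping level is then bounded by the bracket width itself (via $|(P_n-P_0)(f-\pi_K f)| \leq (P_n+P_0)\Delta_K f$), not by another Bernstein deviation. Without this truncation mechanism, your argument does not yield the stated inequality, so the proposal is a correct high-level description of the strategy but not a proof.
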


Theorem \ref{hellexp} appears to be new although it is quite
similar to existing results such as \cite[Theorem
7.6]{VandegeerBook}. The main difference is that Theorem \ref{hellexp}
characterizes the key quantity $t_0$ (which controls $h(\hat{p}_n,
p_0)$) via the condition:  
\begin{equation}
  \label{rate.char}
 \E \sup_{p \in \Ps : h(p_0, p) \leq t} \int
    \frac{4p_0}{p_0 + p} d(P_0 - P_n) \leq t^2 \qt{for all $t \geq
      t_0$}. 
  \end{equation}
On the other hand, van de Geer \cite[Theorem 7.6]{VandegeerBook} characterizes
the rate $t_0$ by the inequality obtained by replacing the left hand
side of \eqref{rate.char} by the bracketing entropy integral (as in \eqref{braentint}) of the function class 
\begin{equation}\label{brackentclass}
  \left\{\frac{4p_0}{p+p_0} : p \in \Ps, h(p, p_0) \leq t \right\} 
\end{equation}
under the $L_2(P_0)$ metric. Even though bracketing entropy integrals
are important for bounding
expected suprema of empirical processes, the expected supremum in
\eqref{rate.char} is more directly 
connected to the Hellinger accuracy of $\hat{p}_n$. Working with the expected
supremum as in \eqref{rate.char} is more convenient compared to
working with the bracketing entropy
integral because the bracketing entropy
of the whole class \eqref{brackentclass} is usually not available so
one would need to decompose it into smaller subclasses whose entropy
can be bounded; it is easier to carry out such a decomposition in
terms of the expected supremum. In some cases, one can use simpler
bounds on the expected supremum without recourse to bracketing entropy
integrals (see, for example, the bound \eqref{trivialb} below); it is
not clear how such bounds can be used in conjunction with  \cite[Theorem
7.6]{VandegeerBook}. Also, for obtaining accuracy results
for the least squares estimator in nonparametric regression with
convex constraints, the current popular approach is based on bounding
expected suprema similar to \eqref{rate.char} via the results of
Chatterjee \cite{chatterjee2014new}. Our Theorem \ref{hellexp} can be seen as an
analogue of the upper bound part of \cite[Theorem
1.1]{chatterjee2014new} for density estimation. Note however that \cite[Theorem
1.1]{chatterjee2014new} also provides a lower bound on the accuracy of
convex least squares estimators in terms of expected suprema while our
result, Theorem \ref{hellexp}, only gives upper bounds.

\section{Bracketing Entropy Bounds for subclasses of
  $\smud$}\label{sec:bracken}
Subsection \ref{init_steps} gives an outline of how Theorem \ref{hellexp} is used to prove Hellinger accuracy bounds for $\smumle$. The key here is to prove bracketing entropy numbers for the following function class: 
\begin{align}\label{Fdest}
  \F = \left\{\frac{p_0 - p}{p_0 + p} \one(R_i) : p \in \smud \text{ and
  }  h(p_0, p) \leq t\right\}. 
\end{align}
The following lemma  allows working with the SMU densities $p$
directly instead of the transformed functions $\frac{p_0 - p}{p_0 +
  p}$. Specifically, it bounds the bracketing numbers of $\F$
via those of  
\begin{align}\label{Pdest}
  \left\{p I_{R} : p \in \smud \text{ and } h(p_0, p) \leq t
  \right\}. 
\end{align}

\begin{lemma}\label{trivial}
  Fix $\mathfrak{q} \in (1, \infty]$ and let $\mathfrak{p}$ be such that
  $1/\mathfrak{p} + 1/\mathfrak{q} = 1$. Then for every $\epsilon >
  0$, we have 
  \begin{equation}\label{brack.lqbound}
    \begin{split}
 &   N_{[]}\left(\epsilon, \left\{\frac{p_0 - p}{p_0 + p} \one(R) : p
          \in \smud, h(p_0, p) \leq t \right\}, L_2(P_0) \right) \\ &\leq
    N_{[]} \left(\frac{\epsilon}{2
    \sqrt{\|p_0^{-1}\|_{L_{\mathfrak{q}}(R)}}}, \{p 
                                \one(R) : p \in \smud, h(p_0, p) \leq
                                                                      t
                                                                      \}
                                                                      ,
                                                                      L_{2
                                                                      \mathfrak{p}}(R)
                                                                      \right)  
   \end{split}                                                                      
 \end{equation}
 where $L_{2 \mathfrak{p}}(R)$ is the usual $L_{2 \mathfrak{p}}$
 metric with respect to the Lebesgue measure on $R$, and
 \[ \|p_0^{-1}\|_{L_{\mathfrak{q}}(R)} :=
  \begin{cases}
    \left(\int_{R} \frac{1}{p_0^{\mathfrak{q}}} \right)^{1/\mathfrak{q}}       & \quad \text{for } \mathfrak{q} \in (1, \infty)\\
   \left(\min_{x \in R} p_0(x) \right)^{-1}  & \quad \text{for } \mathfrak{q} = \infty.
  \end{cases}
\]
%In particular for $\mathfrak{q} = \infty$, we obtain
%\begin{equation}\label{brack.l2minbound}
%  \begin{split}
%   &   N_{[]}\left(\epsilon, \left\{\frac{p_0 - p}{p_0 + p} \one(R) : p
%          \in \smud, h(p_0, p) \leq t \right\}, L_2(P_0) \right) \\ &\leq N_{[]}
%    \left(\frac{\epsilon}{2} \sqrt{\min_{x \in R} p_0(x)}, \left\{p
%        \one(R) : p \in \smud, h(p_0, p) \leq t \right\},
%    L_2(R) \right)
%    \end{split}
%  \end{equation}
\end{lemma}

The above lemma bounds the $L_2(P_0)$ bracketing
entropy number of $$\left\{\frac{p_0 - p}{p_0 + p} \one(R) : p
          \in \mathfrak{P} \right\}$$ in terms of the
bracketing entropy number of $\left\{p
  \one(R) : p \in \mathfrak{P} \right\}$ for the $L_{2 \ph}$
metric. Because of the presence of $L_{\qh}(R)$ norm of $p_0^{-1}$,
these bounds are useful only when $p_0$ is not too small at any point
in $R$. This term is ultimately the reason for the lower bound
restrictions in our Hellinger rate results for $\smumle$ .

We shall apply Lemma \ref{trivial} with $\mathfrak{P} = \left\{p \in
  \smud : h(p_0, p) \leq t \right\}$ for $t > 0$ and this will lead to
upper bounds on the $L_2(P_0)$ bracketing entropy of \eqref{Fdest} in
terms of the bracketing entropy of \eqref{Pdest}. The next step is
therefore to bound the bracketing entropy numbers of SMU densities
over rectangles $R$ under Hellinger constraints of the form $h(p, p_0)
\leq t$. Dealing with such Hellinger constraints directly is a bit
tricky so we convert them into upper and lower bounds for $p$ on the
set $R$ via the following result. 

\begin{lemma}\label{bds}
Suppose $p$ and $p_0$ are coordinatewise non-increasing functions on
  $(0, \infty)^d$ such that
  \begin{align*}
    \int_{{  R }}  \left(\sqrt{p} - \sqrt{p_0} \right)^2 \leq
    t^2 
  \end{align*}
  for some $t > 0$ where $R \subseteq (0, \infty)^d$ is a $d$ dimensional rectangle. Then for every $x \in R$, we have 
  \begin{equation*}
    p(x) \leq U_{p_0}(x, t) := \inf_{  \alpha \leq x, \alpha \in R}
    \left(\sqrt{p_0(\alpha)} + \frac{t}{\sqrt{(x_1 - \alpha_1) \dots (x_d -
          \alpha_d)}} \right)^2
  \end{equation*}
  and
  \begin{equation*}
    p(x) \geq L_{p_0}(x, t) := \sup_{\beta \geq x, \beta \in R} \left(\sqrt{p_0(\beta)} -
        \frac{t}{\sqrt{(\beta_1 - x_1) \dots (\beta_d - x_d)}}
    \right)_+^2
  \end{equation*}
  where, in the second inequality above, $u^2_+ := \left[\max(u,0)
  \right]^2$.  The inequalities $\leq$ and  $\geq$ appearing in the
  infimum and supremum above respectively should be interpreted in
  the pointwise   sense.
\end{lemma}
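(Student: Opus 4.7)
The plan is to exploit the coordinatewise monotonicity of $p$ and $p_0$ so that the pointwise value $p(x)$ controls, and is controlled by, integrals of $(\sqrt{p}-\sqrt{p_0})^2$ over sub-rectangles of $R$ having $x$ as a corner. The global Hellinger-type bound $\int_R(\sqrt{p}-\sqrt{p_0})^2 \leq t^2$ then turns into an inequality between $\sqrt{p(x)}$ and $\sqrt{p_0}$ evaluated at the opposite corner of that sub-rectangle, and optimizing over the choice of corner delivers $U_{p_0}$ and $L_{p_0}$.

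To prove the upper bound, I would fix $x \in R$ and an arbitrary $\alpha \in R$ with $\alpha \leq x$, and consider the sub-rectangle $[\alpha, x] \subseteq R$, whose Lebesgue volume equals $\prod_{i=1}^d(x_i - \alpha_i)$. Coordinatewise monotonicity forces $p(y) \geq p(x)$ and $p_0(y) \leq p_0(\alpha)$ for every $y \in [\alpha, x]$, hence $\sqrt{p(y)} - \sqrt{p_0(y)} \geq \sqrt{p(x)} - \sqrt{p_0(\alpha)}$ on that sub-rectangle. If $\sqrt{p(x)} \leq \sqrt{p_0(\alpha)}$, the claimed inequality $\sqrt{p(x)} \leq \sqrt{p_0(\alpha)} + t/\sqrt{\prod_i(x_i - \alpha_i)}$ is trivial; otherwise both sides are non-negative, so I can square the pointwise inequality and integrate over $[\alpha, x]$ to obtain $t^2 \geq (\sqrt{p(x)} - \sqrt{p_0(\alpha)})^2 \prod_i(x_i - \alpha_i)$. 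Rearranging, squaring, and taking the infimum over admissible $\alpha$ yields exactly $p(x) \leq U_{p_0}(x,t)$.

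The lower bound follows by the symmetric argument. For $\beta \in R$ with $\beta \geq x$, on the sub-rectangle $[x,\beta]$ monotonicity gives $p(y) \leq p(x)$ and $p_0(y) \geq p_0(\beta)$ throughout, so $\sqrt{p_0(y)} - \sqrt{p(y)} \geq \sqrt{p_0(\beta)} - \sqrt{p(x)}$. When $\sqrt{p_0(\beta)} \geq \sqrt{p(x)}$, squaring and integrating yields $\sqrt{p(x)} \geq \sqrt{p_0(\beta)} - t/\sqrt{\prod_i(\beta_i - x_i)}$; otherwise $0$ is automatically a lower bound, which is precisely what the positive-part notation $(\,\cdot\,)_+^2$ in the statement encodes. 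Taking the supremum over admissible $\beta$ then gives $p(x) \geq L_{p_0}(x,t)$.

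This is a short, essentially mechanical argument and I do not foresee a real obstacle. The only subtlety is the sign check that justifies squaring the Hellinger-integrand inequality; this is handled trivially in the upper bound (since the RHS bound holds vacuously when $\sqrt{p(x)}$ is smaller than $\sqrt{p_0(\alpha)}$) and is absorbed into the $(\,\cdot\,)_+^2$ notation in the lower bound, so no case analysis beyond that is required.
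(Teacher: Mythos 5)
Your proposal is correct and follows essentially the same approach as the paper's proof: restrict the Hellinger integral to the sub-rectangle $[\alpha,x]$ (or $[x,\beta]$), use coordinatewise monotonicity of $p$ and $p_0$ to bound the integrand pointwise by its value at the opposite corner, and integrate. The paper handles the sign check by a brief WLOG at the start; your explicit case split is just a more spelled-out version of the same observation.
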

The above result is stated for coordinatewise non-increasing functions
on $(0, \infty)^d$ and it automatically applies to densities in
$\smud$ as they are always coordinatewise non-increasing (this follows
directly from \eqref{pwsmudef}). 

The main task now is to control the bracketing entropy numbers of
bounded densities in $\smud$ over rectangles $R$ (with respect to the
$L_2$ and Hellinger metrics). More precisely, for a fixed compact rectangle
\begin{align}\label{Rspec}
  R := [a_1, b_1] \times \dots \times [a_d, b_d]
\end{align}
for $0 \leq a_i < b_i < \infty, 1 \leq i \leq d$ and $0 \leq \alpha <
\beta < \infty$, let 
\begin{equation}\label{falbe}
  \F(R, \alpha, \beta) := \left\{g : R \rightarrow [\alpha, \beta]
    \text{ such that } g = p \middle|_R \text{ for some } p \in
    \smud  \right\}
\end{equation}
where $g = p|_R$ means that $g(x) = p(x)$ for $x \in R$. Note that 
functions in $\F(R, \alpha, \beta)$ are bounded on $R$ by $\alpha$ (from 
below) and $\beta$ (from above). The following result gives upper
bounds on the bracketing entropy of $\F(R, \alpha, \beta)$ under the
$L_r(R)$ metric (here $L_r(R)$ stands for $L_r$ metric with respect to 
the Lebesgue measure on $R$) for fixed $r \in [1, \infty)$. 

\begin{lemma}\label{brackd}
For every $\epsilon > 0$ and $r \in [1, \infty)$, we have 
\begin{equation}\label{L2ent.eq}
  \begin{split}
  &    \log N_{[]}(\epsilon, \F(R, \alpha, \beta),  L_r(R)
      \color{black}) \\ &\leq \frac{C_{d, r} (\beta - \alpha)
       |R|^{1/r}}{\epsilon} 
      \left(\log \frac{(\beta - \alpha) |R|^{1/r}}{\epsilon}
      \right)^{2(d-1)} \one \left(\epsilon \leq (\beta - \alpha)
    |R|^{1/r} \right)
  \end{split}    
    \end{equation}
    where $|R| := (b_1 - a_1) \dots (b_d - a_d)$ is the volume of
    $R$, and $C_{d, r}$ is a constant depending on $d$ and $r$. 
\end{lemma}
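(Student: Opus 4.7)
\textbf{Proof proposal for Lemma \ref{brackd}.}

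My plan is to reduce this entropy bound to an analogous known estimate for distribution functions of nonnegative Borel measures, following the route signaled in Section~\ref{sec:bracken} via the references to Gao~\cite{gao2013book}.

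The starting point is the correspondence between SMU densities and multivariate distribution functions. Any $p \in \smud$ satisfies the sign-alternation conditions \eqref{smupd} on its mixed partial derivatives, so the reflected function defined on $R = [a_1, b_1] \times \dots \times [a_d, b_d]$ by $q(u_1, \dots, u_d) := p(a_1 + b_1 - u_1, \dots, a_d + b_d - u_d)$ satisfies $\partial^{|S|} q / \prod_{i \in S} \partial u_i \geq 0$ for every nonempty $S \subseteq \{1, \dots, d\}$. Equivalently, $q$ has finite Hardy--Krause variation on $R$ and admits a Hoeffding-type decomposition into contributions from nonnegative measures on the various faces of $R$. If $p \in \F(R, \alpha, \beta)$, the range of $q$ on $R$ lies in $[\alpha, \beta]$, and this uniform bound forces the total masses of each of the component measures appearing in the decomposition to be bounded by a multiple of $\beta - \alpha$ depending only on $d$.

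Next I would invoke Gao's bracketing entropy estimate \cite{gao2013book} for the class of $d$-variate distribution functions on $[0,1]^d$ associated to nonnegative measures of total mass at most $V$: this class has $L_r([0,1]^d)$ bracketing entropy bounded by a constant multiple of $(V/\epsilon)(\log(V/\epsilon))^{2(d-1)}$ for $0 < \epsilon \leq V$. Summing the bounds obtained for each component in the Hoeffding decomposition (absorbing dimension-dependent constants into $C_{d,r}$) yields a bracketing entropy bound of the same shape for $\F(R, \alpha, \beta)$, after transferring from $[0,1]^d$ to $R$ and noting that the affine rescaling inflates the $L_r$ norm by a factor of $|R|^{1/r}$. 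Substituting $V = \beta - \alpha$ produces the right-hand side of \eqref{L2ent.eq}. The indicator restriction $\epsilon \leq (\beta - \alpha)|R|^{1/r}$ is trivial, since $\F(R,\alpha,\beta)$ has $L_r(R)$-diameter at most this value and is otherwise covered by a single bracket.

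The primary obstacle is the first step: carefully verifying that $\F(R, \alpha, \beta)$ embeds into a class of Hardy--Krause-bounded functions whose total variation on $R$ is controlled by $\beta - \alpha$, so that Gao's estimate transfers uniformly. Tracking the Hoeffding decomposition and checking that each component's total mass is controlled by $\beta - \alpha$ (rather than by some potentially much larger quantity depending on the global behavior of $p$ outside $R$) is the key technical point. The remaining steps---invocation of Gao's bound, handling of the lower-dimensional face contributions, and the affine rescaling---are routine.
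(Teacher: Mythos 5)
Your plan correctly identifies Gao's bracketing estimate (Theorem~\ref{gaoL2}) as the engine, but the reduction you sketch is both incomplete and more complicated than what is actually needed. The paper does not pass through a Hoeffding/ANOVA decomposition at all. It starts from the exact integral representation \eqref{altrep}, $p(u) = \tilde G\bigl([u_1,\infty)\times\cdots\times[u_d,\infty)\bigr)$, which holds for every $p \in \smud$ (no smoothness is needed, whereas your argument leans on the mixed-derivative constraints \eqref{smupd}, which the paper only states for \emph{smooth} members of $\smud$). It then restricts $\tilde G$ to $[a_1,\infty)\times\cdots\times[a_d,\infty)$ and pushes it forward under $(u_1,\dots,u_d)\mapsto(\min(u_1,b_1),\dots,\min(u_d,b_d))$. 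This produces a \emph{single} measure $G'$ supported on the closed rectangle $R$ (the mass that used to live far from $R$ is simply deposited on the upper faces of $R$) with $p(x) = G'\bigl([x_1,b_1]\times\cdots\times[x_d,b_d]\bigr)$ for all $x \in R$. After the affine reflection/rescaling of $R$ to $[0,1]^d$ and subtracting the atom of mass $\alpha$ that sits at the corner $(b_1,\dots,b_d)$, the function $(p|_R - \alpha)/(\beta-\alpha)$ is exhibited as the distribution function of a subprobability measure on $[0,1]^d$, and Theorem~\ref{gaoL2} applies directly. No decomposition into lower-dimensional face contributions is needed, precisely because the class $\A_d$ already contains distribution functions of measures with mass on the boundary of $[0,1]^d$.

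There is a genuine gap in your version: the step you flag as ``the key technical point'' is asserted, not proved. You claim that the range constraint $q \in [\alpha,\beta]$ forces each Hoeffding component measure to have total mass $O_d(\beta-\alpha)$, but you neither exhibit the decomposition nor verify this bound; moreover, after decomposing you would still need to justify that brackets built separately for each face component can be recombined into brackets for the sum (this is not automatic --- the pieces must be bracketed consistently), and you would need Gao's estimate for each lower-dimensional class separately. The pushforward-under-$\min$ construction sidesteps all of this and is what makes the lemma clean; it also works for general (non-smooth) $p \in \smud$, whereas a derivative-based reflection argument would require passing to finite differences to be fully rigorous.
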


Lemma \ref{brackd} (proved in  Section \ref{additechres}\color{black})
is a consequence of the following result due to
Gao \cite{gao2013book} on  bracketing entropy numbers of distribution
functions of subprobability measures on $[0, 1]^d$ with respect to the
$L_2[0, 1]^d$ metric (a subprobability measure $G$ on $[0, 1]^d$ is a
nonnegative measure satisfying $G[0, 1]^d \leq 1$).  

\begin{theorem}[Theorem 1.1 of \cite{gao2013book}]\label{gaoL2}
Let $\A_d$ denote the class of all distribution functions of
subprobability measures on $[0, 1]^d$ i.e., $\A_d$ contains functions
of the form 
\begin{equation*}
  (x_1, \dots, x_d) \mapsto G \left([0, x_1] \times \dots \times [0, x_d] \right)
\end{equation*}
as $G$ varies over the class of all nonnegatives measures on $[0,
1]^d$ with $G[0, 1]^d \leq 1$. Then for every $\epsilon > 0$ and $r
\in [1, \infty)$, we have
  \begin{equation}\label{gaohell.eq}
    \log N_{[]} \left(\epsilon, \A_d, { L_r([0,1]^d)} \right) \leq
    \frac{C_{d, r}}{\epsilon} \left(\log \frac{1}{\epsilon}
    \right)^{2(d-1)} \one \left(\epsilon \leq 1 \right)
  \end{equation}
  for a constant $C_{d, r}$ depending on $d$ and $r$. 
\end{theorem}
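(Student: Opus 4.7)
The plan is to prove the result by induction on the dimension $d$, working with the $L_r$ norm directly throughout (rather than trying to convert from the $L_1$ case, which loses a factor in the exponent of $\epsilon$ because elements of $\mathcal{A}_d$ are bounded by $1$). Throughout, the key structural feature I would exploit is that for any bracket pair $(L,U)$ in $\mathcal{A}_d$ one has $0 \leq U - L \leq 1$ pointwise, so it suffices to construct, for each $\epsilon$, a family of bracket pairs whose $L_r$-width is at most $\epsilon$, and count them.

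For the base case $d = 1$, the class $\mathcal{A}_1$ consists of non-decreasing $[0,1]$-valued functions on $[0,1]$. Partitioning the range $[0,1]$ into $\lceil 1/\epsilon \rceil$ equally spaced levels and, for each non-decreasing step function at those levels, taking the bracket of width one level value in $L_r$, yields $\log N_{[]}(\epsilon, \mathcal{A}_1, L_r) \leq C_r/\epsilon$; this matches the claim at $d=1$ since $(\log(1/\epsilon))^{2(d-1)} = 1$.

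For the inductive step, given $F \in \mathcal{A}_d$ arising from a sub-probability measure $G$, slice along the last coordinate using a dyadic partition of $[0,1]$ into $2^j$ slabs $S_{j,k} := [k 2^{-j}, (k+1) 2^{-j}]$. For each $k$, the projected measure $G|_{[0,1]^{d-1} \times S_{j,k}}$ (integrated over the last coordinate) has, after normalization, a distribution function on $[0,1]^{d-1}$ that lies in $\mathcal{A}_{d-1}$, while the total mass across $k$ sums to at most $1$. I would then write
\[
F(x_1,\ldots,x_d) = \sum_{k : (k+1)2^{-j} \leq x_d} \mu_{j,k} + \mu_{j,k^*}\,\tilde F_{j,k^*}(x_1,\ldots,x_{d-1},x_d),
\]
where $\mu_{j,k}$ is the $G$-mass of the $k$-th slab and $\tilde F_{j,k^*}$ is the conditional CDF within the straddling slab. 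Applying the inductive bracketing bound at $d-1$ with a scale-dependent accuracy $\epsilon_j$ to each $\tilde F_{j,k}$ produces brackets of width $\epsilon_j$ at cost $(C/\epsilon_j)(\log(1/\epsilon_j))^{2(d-2)}$ per slab, and combining across all $2^j$ slabs and across dyadic scales $j = 0, 1, \ldots, J$ via a telescoping / multi-resolution argument yields an $L_r$-bracket for $F$ of total width $\lesssim \epsilon$ provided $J \asymp \log(1/\epsilon)$ and the $\epsilon_j$ are chosen to balance the contributions.

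The main obstacle is the combinatorial bookkeeping needed to obtain exactly the exponent $2(d-1)$ on the logarithm while keeping the $\epsilon^{-1}$ factor linear. The delicate point is that although each dyadic scale has $2^j$ slabs, only $O(2^j \epsilon)$ of them can carry enough mass to affect an $\epsilon$-bracket (because the total mass is bounded by $1$), so the effective count at each scale is manageable; balancing $\epsilon_j$ so that $\sum_{j=0}^{J}(1/\epsilon_j)(\log(1/\epsilon_j))^{2(d-2)}$ remains of order $(1/\epsilon)(\log(1/\epsilon))^{2(d-1)}$ absorbs one logarithm from the range of scales and one from the intra-scale discretization of conditional masses, producing the claimed two extra logarithmic factors per added dimension. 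The full implementation of this multi-resolution bracketing construction is carried out in \cite{gao2013book}.
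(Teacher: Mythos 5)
There is a genuine gap here, and it starts with a mismatch of expectations: the paper does not prove this statement at all — it is imported verbatim as Theorem 1.1 of \cite{gao2013book}, and the paper's only obligation is the citation. Your proposal, despite its framing as a proof by induction, ultimately does the same thing, since the decisive step ("the combinatorial bookkeeping needed to obtain exactly the exponent $2(d-1)$") is never carried out but is instead deferred to "the full implementation \ldots carried out in \cite{gao2013book}" — i.e., to the very reference whose Theorem 1.1 you are being asked to prove. As written, the proposal is therefore a sketch of a plausible strategy plus an appeal to the original source, not an independent argument.

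Moreover, the sketch itself contains a concrete error in the inductive step. Writing $F(x)=G([0,x_1]\times\cdots\times[0,x_d])$ and slicing the last coordinate into dyadic slabs $S_{j,k}$, the contribution of a slab lying entirely below $x_d$ is
\[
G\left([0,x_1]\times\cdots\times[0,x_{d-1}]\times S_{j,k}\right)
=\mu_{j,k}\,\tilde F_{j,k}(x_1,\ldots,x_{d-1}),
\]
i.e.\ the slab's mass multiplied by its $(d-1)$-dimensional conditional distribution function evaluated at $(x_1,\ldots,x_{d-1})$ — not the full mass $\mu_{j,k}$ as in your displayed decomposition. This is not a cosmetic slip: it means every one of the up to $2^j$ completed slabs carries its own element of $\mathcal{A}_{d-1}$ that must be bracketed, and the real difficulty of the theorem is precisely how to bracket the weighted sum $\sum_k \mu_{j,k}\tilde F_{j,k}$ without the $L_r$ bracket widths adding up across slabs and scales to something worse than $\epsilon^{-1}(\log(1/\epsilon))^{2(d-1)}$. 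Your remarks that only $O(2^j\epsilon)$ slabs "can carry enough mass" and that balancing $\epsilon_j$ "absorbs one logarithm from the range of scales and one from the intra-scale discretization" are assertions of the desired outcome rather than derivations; under $L_r$ with $r>1$ the interaction between the weights $\mu_{j,k}$ and the per-slab bracket widths is exactly where the delicate work lies. If you want a self-contained proof, you must fix the decomposition and then actually execute this accounting; otherwise the honest course is to do what the paper does and cite \cite{gao2013book} for the result.
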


The reason why Theorem \ref{gaoL2} implies Lemma \ref{brackd} is that
functions in $\smud$ are quite closely connected to distribution
functions of measures. To see this, note that, by definition, every
density $p \in \smud$ is of the form 
\begin{equation*}
  p(u_1, \dots, u_d) = \int \frac{\one\{u_1 \leq \theta_1, \dots, u_d
    \leq \theta_d\}}{\theta_1 \dots \theta_d} dG(\theta_1, \dots,
  \theta_d)
\end{equation*}
for some probability measure $G$ on $(0, \infty)^d$. The above can be
alternatively written as
\begin{equation}\label{altrep}
  p(u_1, \dots, u_d) = \tilde{G} \left([u_1, \infty) \times \dots
    \times [u_d, \infty) \right). 
\end{equation}
where $\tilde{G}$ is the measure on $(0, \infty)^d$ defined by 
\begin{equation*}
  d \tilde{G}(\theta_1, \dots, \theta_d) := \frac{dG(\theta_1, \dots,
    \theta_d)}{\theta_1 \dots \theta_d}. 
\end{equation*}
The right hand side of \eqref{altrep} has obvious connections to the
distribution function of a measure. 

Lemma \ref{brackd} can be used, in conjunction with  inequality
\eqref{brack.lqbound} for $q=\infty$ as well as Lemma \ref{bds}, to prove
bracketing entropy bounds for \eqref{Fdest}. These entropy bounds are then used with Theorem \ref{hellexp} (following the approach
outlined in Subsection \ref{init_steps}) to yield bounds
on the Hellinger accuracy for $\smumle$.

\section{Hellinger accuracy of $\smumle$} \label{sec:smuresults}

We now present our results on the Hellinger‐distance accuracy of $\smumle$ relative to the true density $p_0 \in \smud$. The centerpiece is Theorem \ref{jrec}, from which several natural consequences follow—most notably Corollary \ref{conjlowbou}, which confirms the PW conjecture under the three assumptions (CS, UB, and LB) introduced in the introduction.

We use here the following notation. For a closed and
bounded rectangle $R \subseteq [0, \infty)^d$ and $\qh \in (1, \infty)$
\begin{align*}
  W(R, p_0, \qh) := \max \left(1, |R|^{1/(4\ph)}
    \|p_0^{-1}\|_{L_{\qh}(R)}^{1/4} \sqrt{\max_{x \in R} p_0(x)}
  \right)
\end{align*}
where $\ph$ is such that $1/\ph + 1/\qh = 1$. It is helpful to note
that if $R$ is of the form $[a_1, b_1] \times \dots \times [a_d,
b_d]$, then $\max_{x \in R} p_0(x) = p_0(a_1, \dots, a_d)$ because
$p_0 \in \smud$. 

\begin{theorem}\label{jrec}
  Suppose there exists a set of rectangles $R_j, j = 1, \dots, J$ with
  disjoint interiors such that
  \begin{equation}
    \label{jrec.con}
    \max_{1 \leq j \leq J} W(R_j, p_0, \qh) < \infty ~~ \text{ and }
    ~~ P_0 \left(\cup_{j=1}^J R_j \right) \geq 1 -  J^2\frac{(\log
      n)^{4d-2}}{n^{2/3}}.
  \end{equation}
  Then there exist positive constants $C_{d, \qh}$ and $C_d$ such that
  \begin{equation}\label{jrec.eq}
    \E h^2 \left(p_0, \smumle \right)  \leq C_{d, \qh} J^2 \frac{(\log
  n)^{4d-2}}{n^{2/3}} W^2 \max \left((\log W)^{2d-2}, 1 \right) + 
 C_d \frac{J}{n} W^4 
\end{equation}
where $W = \max_{1 \leq j \leq J} W(R_j, p_0, \qh)$. 
\end{theorem}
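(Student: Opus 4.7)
The overall plan is to follow the same strategy used to establish Theorem \ref{conjecture}, with the extra twist of controlling the mass of $p_0$ that lies outside the good region $G := \bigcup_{j=1}^J R_j$. The first step is to apply Theorem \ref{hellexp} with $\Ps = \smud$; this reduces the problem to producing a majorant $\bar G(t)$ of $\E G(t)$ and a critical radius $t_0$ such that $\bar G(t_0) \leq t_0^2$, together with the monotonicity requirement that $t \mapsto \bar G(t)/t^{2-\eta}$ be non-increasing (for which $\eta$ close to $1$ will suffice given the form of $\bar G$ we construct).

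Next, extend the decomposition \eqref{eledeco} to include the complement $B := [0,\infty)^d \setminus G$:
\[
\E G(t) \leq 2 \sum_{j=1}^J H(t, R_j) + 2 H(t, B).
\]
Handle $H(t, B)$ with the trivial bound \eqref{trivialb}, namely $H(t, B) \leq 2 P_0(B)$, which by the second half of \eqref{jrec.con} is at most $2 J^2 (\log n)^{4d-2}/n^{2/3}$; this contribution is $t$-free and so it feeds directly into $t_0^2$. For each $H(t, R_j)$ I would replay the single-rectangle analysis used for Theorem \ref{conjecture}: use Lemma \ref{trivial} with exponent $\qh$ to pass from the $L_2(P_0)$-bracketing entropy of $\{(p_0-p)/(p_0+p)\one(R_j) : p \in \smud,\ h(p_0,p) \leq t\}$ to the $L_{2\ph}(R_j)$-bracketing entropy of $\{p\one(R_j) : p \in \smud,\ h(p_0,p) \leq t\}$ at scale $\epsilon/(2\sqrt{\|p_0^{-1}\|_{L_\qh(R_j)}})$; use Lemma \ref{bds} to sandwich each admissible $p$ between the envelopes $L_{p_0}(\cdot, t)$ and $U_{p_0}(\cdot, t)$, further subdividing $R_j$ so that these envelopes are comparable to $\max_{R_j} p_0$; apply Lemma \ref{brackd} on each sub-piece; and feed the result into Theorem \ref{expcontrol}. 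The outcome is $H(t, R_j) \leq f(t, W(R_j, p_0, \qh))$ for precisely the function $f$ that appears in the proof of Theorem \ref{conjecture}, and monotonicity of $f$ in its second argument gives $f(t, W(R_j, p_0, \qh)) \leq f(t, W)$.

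Summing yields $\bar G(t) \leq 2 J f(t, W) + 2 J^2 (\log n)^{4d-2}/n^{2/3}$. Since $f(t, W)$ is the sum of a $\sqrt{t}$-type term (from the bracketing integral in Theorem \ref{expcontrol}) and a $1/t$-type term (from the quadratic piece of the Ossiander bound), solving $2 J f(t_0, W) \leq t_0^2/2$ gives $t_0^2 \lesssim J^{4/3} (\log n)^{4d-2} n^{-2/3} W^2 \max((\log W)^{2d-2}, 1)$; since $J \geq 1$, this is majorized by $J^2 (\log n)^{4d-2} n^{-2/3} W^2 \max((\log W)^{2d-2}, 1)$, the target first summand of \eqref{jrec.eq}, which also absorbs the $J^2 (\log n)^{4d-2}/n^{2/3}$ contribution from the bad set. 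Combining this with the $\helconst/(n\eta^2)$ term from \eqref{hellexp.exp}---which, after tracking how the sup-norm and variance of the integrand scale with $W$ and with the number $J$ of rectangles, contributes at most $C_d J W^4/n$---produces \eqref{jrec.eq}.

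I expect the main obstacle to be the bookkeeping required when transferring the single-rectangle analysis uniformly across $R_1, \dots, R_J$. In particular, applying Lemma \ref{bds} on an individual $R_j$ requires further partitioning it into sub-rectangles on which the envelopes $L_{p_0}(\cdot, t)$ and $U_{p_0}(\cdot, t)$ are within constant factors of $\max_{R_j} p_0$; the number and geometry of these sub-rectangles depend on $W(R_j, p_0, \qh)$, and one must absorb all this dependence cleanly into the single quantity $W = \max_j W(R_j, p_0, \qh)$ so that the resulting $\bar G(t)$ is amenable to the balance inequality. Conceptually, however, the argument is a direct adaptation of the proof of Theorem \ref{conjecture}, and the bad set $B$ is handled essentially for free thanks to the smallness of $P_0(B)$ assumed in \eqref{jrec.con}.
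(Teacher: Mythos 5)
Your proposal follows essentially the same route as the paper: apply Theorem \ref{hellexp}, decompose $\E G(t)$ into the sum of per-rectangle terms $H(t,R_j)$ plus a complement term, bound the complement by the trivial bound \eqref{trivialb} and the assumed bound on $P_0\bigl((\cup_j R_j)^c\bigr)$, and bound each $H(t,R_j)$ via the dyadic sub-rectangle decomposition, Lemma \ref{trivial}, Lemma \ref{bds}, Lemma \ref{brackd}, and Theorem \ref{expcontrol}; the paper packages this per-rectangle analysis as Proposition \ref{mainhrbound}. One small inaccuracy: you report the balance $2Jf(t_0,W)\leq t_0^2/2$ as yielding $t_0^2\lesssim J^{4/3}$, but that only accounts for the $\sqrt{t/n}$ piece of the Ossiander bound; the linear-in-$t$ pieces of $f$ (the $(\log n)^{2d-1} n^{-1/3} t\,W$ term and the boundary contribution $(\log n)^d t n^{-1/3}W$) force $t_0\gtrsim J(\log n)^{2d-1}n^{-1/3}W\max((\log W)^{d-1},1)$, so the genuine scaling is $t_0^2\sim J^2$ --- your final step of majorizing $J^{4/3}$ by $J^2$ happens to land on the right exponent, but the bound cannot in fact be improved to $J^{4/3}$ by this argument.
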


It is clear from \eqref{jrec.eq} that if the support of $p_0$ can be
partitioned into a logarithmic number of subrectangles $R_j$ (along
with a residual subset of small 
$p_0$ probability) for which $W(R_j, p_0, \qh) < \infty$, then
$h^2(p_0, \smumle)$ converges at the rate  $n^{-2/3} (\log
n)^{\gamma_d}$.

When $J=1$, we have the following Corollary.
\begin{corollary}\label{conjecture}
Fix $d \geq 2$ and $n \geq 2$. Suppose $p_0 \in \smud$ is concentrated
on a rectangle $R = [0,M]^d \subseteq [0, \infty)^d$ for some $M < \infty$. Assume that
\begin{equation}
  \label{wass}
 W =  W(R, p_0, \qh)  < \infty, 
\end{equation}
for a fixed $\qh \in (1, \infty]$. Then there exist positive constants $C_{d, \qh}$ and $C_d$ such that 
\begin{align}\label{conjecture.eq}
  \E h^2 \left(p_0, \smumle \right) \leq C_{d, \qh} \frac{(\log
  n)^{4d-2}}{n^{2/3}} W^2 \max \left((\log W)^{2d-2}, 1 \right) + 
  C_d\frac{W^4}{n}. 
\end{align}
\end{corollary}

Observe that $W(R, p_0, \qh) < \infty$ is equivalent to the three
conditions $|R| < \infty$ (CS assumption),  $\max_{x \in R} p_0(x) <
\infty$ (UB assumption) and $\|p_0^{-1}\|_{L_{\qh}(R)}
< \infty$ (assumption \eqref{lqass}). Therefore Corollary
\ref{conjecture} is equivalent to the following Corollary \ref{finitelq}.
Note that Corollary \ref{conjecture} is a more explicit form of Corollary \ref{finitelq} where
the dependence of the constant $C_{d, B, M, \qh, T}$ on the
$p_0$-dependent quantities $B, M, \qh, T$ is made more explicit. 
If \eqref{lqass}  is violated, then $W(R, p_0, \qh) = +\infty$. For such
$p_0$, it might sometimes be possible to obtain smaller subrectangles
$R_1, \dots, R_J$ inside the full domain $R$ for which $W(R_j, p_0,
\qh) < \infty$. If the number of such rectangles $J$ is at most
logarithmic in $n$, then one still gets the $n^{-2/3} (\log
n)^{\gamma_d}$ rate as proved in Theorem \ref{jrec}.

\begin{corollary}\label{finitelq}
  Fix $d \geq 2$ and $n \geq 2$. Suppose $p_0 \in \smud$ is
  concentrated on $[0, M]^d$ for some $M > 0$, and is bounded from
  above by $B$ on $[0, M]^d$. Suppose further that for some fixed $\qh \in (0,
      \infty)$,
  \begin{equation}\label{lqass}
    T := \|p_0^{-1}\|_{L_{\qh}([0, M]^d)} := \left(\int_{[0, M]^d} p_0^{-\qh}
    \right)^{1/\qh} < \infty. %\qt{for some fixed $\qh \in (0,\infty)$}. 
  \end{equation}
  Then there exists $C_{d, B, M, \qh, T} \in (0, \infty)$ such that
  \begin{equation*}
    \E h^2 \left(p_0, \smumle \right) \leq C_{d. B, M, \qh, T}
    n^{-2/3} (\log n)^{4d-2}. 
  \end{equation*}
\end{corollary}

On the compact domain $[0, M]^d$, it is clear that the LB assumption
implies \eqref{lqass} for every $\qh$. This leads to the following corollary. Note that \eqref{lqass} is a weaker assumption compared to LB because there exist
many densities $p_0 \in \smud$ which satisfy \eqref{lqass} for a fixed
finite $\qh \in (1, \infty)$ but which violate the LB assumption. 

\begin{corollary}\label{conjlowbou}
  Fix $d\geq 2$ and $n \geq 2$. Suppose $p_0 \in \smud$  is
  concentrated on $[0, M]^d$ for some $M > 0$, and is bounded from
  above by $B$ and below by $b > 0$ on $[0, M]^d$. Then there exists
  $C_{d, B, M, b} \in (0, \infty)$ (depending on $d, B, M, b$) such that
  \begin{equation}\label{conjlowbou.eq}
\mathbb{E} h^2(p_0,  \smumle) \leq C_{d, B, M, b} n^{-2/3} (\log
  n)^{4d-2}. 
\end{equation}
\end{corollary}

% In Corollary \ref{conjlowbou}, we used the three assumptions CS, UB and LB (these names for the assumptions were introduced in the introduction) to prove the PW conjecture \eqref{pwconj} with $\gamma_d = 4d -2$.  The CS and UB assumptions were also made by PW while stating their conjecture. We have included the LB assumption for technical reasons because we are unable to prove \eqref{pwconj} without it. We are able to weaken it to some degree (as will be clear from the following results) but are unable to remove it completely. Corollary \ref{finitelq} replaces the LB assumption by a weaker finite $L_{\qh}$ norm assumption on $p_0^{-1}$ for a fixed $\qh \in (1, \infty)$.

\color{black}

Theorem \ref{jrec} can be used to remove the LB assumption in some cases. The following result shows that, when $p_0 \in
\smud$ is \color{black} close to 
a product density, $h^2(p_0, \smumle)$ has the $n^{-2/3} (\log n)^{4d-2}$ rate without
any lower bound assumption on $p_0$. Only assumptions needed are compact
support and boundedness from above for each marginal of $p_0$ used in the lower and the upper bound.  Note
that even though $p_0$ is assumed to  be close to a product measure in
Proposition \ref{prodsmugen}, the estimator $\smumle$ is the MLE over all
densities in $\smud$. 
Proposition \ref{prodsmugen} ia proved by
explicitly constructing a partition of $[0, M]^d$ with $J \leq C_{d,
  A, M} ( \log \log n)^{d}$ satisfying the 
conditions of Theorem \ref{jrec}.

\color{black}

\begin{proposition}\label{prodsmugen}
  Suppose $p_0 \in \smud$ is such that there exist
  univariate right-continuous nonincreasing densities $p_{01}, \dots,
  p_{0d}$  such that 
  \begin{equation}\label{prodsmugencon}
a~p_{01}(x_1) \dots p_{0d}(x_d) \leq  p_0(x_1, \dots, x_d) \leq A~
p_{01}(x_1) \dots p_{0d}(x_d) 
\end{equation}
for two positive $a$ and $A$. Further assume that each
$p_{0j}$ is concentrated on $[0, 
  M]$ with $\sup_{x_j \in [0,
    M]} p_{0j}(x_j) \leq B$. Then there 
  exists $C_{d, B, M, a, A} \in (0, \infty)$ such that
  \begin{equation*}
    \E h^2 \left(p_0, \smumle \right) \leq C_{d, B, M, a, A} (
    \log \log n)^{2d} n^{-2/3}
    (\log n)^{4d - 2}.  
  \end{equation*}
  for all $n \geq 3$. 
\end{proposition}

\begin{remark}
If $a=A=1$, $p_0 \in \smud$ is a product probability density of the form
  \begin{equation*}
    p_0(x_1, \dots, x_d) = p_{01}(x_1) \dots p_{0d}(x_d)
  \end{equation*}
  where each $p_{0j}$ is a nonincreasing right continuous univariate
  density on $[0, 
  M]$ with $\sup_{x_j \in [0, M]} p_{0j}(x_j) \leq B$. Thus there
  exists $C_{d, B, M} \in (0, \infty)$ such that
  \begin{equation*}
    \E h^2 \left(p_0, \smumle \right) \leq C_{d, B, M} n^{-2/3} (
    \log \log n )^{2d} (\log
    n)^{4d - 2}. 
  \end{equation*}
\end{remark}

In the next result, we prove a minimax lower bound which proves that
the rate  given by Corollary \ref{conjlowbou} cannot be significantly
improved. Specifically, we prove that the minimax risk in squared
Hellinger distance under the assumptions of Corollary \ref{conjlowbou}
is bounded from below by $n^{-2/3} (\log n)^{(d-1)/3}$. This shows
that the bound \eqref{conjlowbou.eq} is optimal up to a logarithmic
factor of  $(\log n)^{(11d-5)/3}$. 

\begin{theorem}[Minimax lower bound]\label{lowerbound}
Let $\mathcal{P}_{\mathrm{SMU}}([0,M]^d,b, B)$ be the class of scale
mixtures of uniform densities that are supported on $[0,M]^d$ and that
are bounded above by $B$ and bounded below by $b$. There exists a
positive constant $c_d$ such that 
\begin{equation}\label{lowerbound.eq}
\inf_{\tilde p_n} \sup_{p_0 \in \mathcal{P}_{\mathrm{SMU}}([0,M]^d,b, B)
} \E h^2(p_0, \tilde p_n) \geq c_d n^{-2/3} (\log n)^{(d-1)/3}. 
\end{equation}
whenever $B, 1/b, M$ are all larger than $c_d$.
\end{theorem}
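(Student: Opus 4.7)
The plan is to apply Fano's inequality after constructing a rich packing of $\mathcal{P}_{\mathrm{SMU}}([0,M]^d,b,B)$ in the Hellinger metric. Given a finite family $\{p_1,\dots,p_N\}$ of admissible densities with pairwise Hellinger separation at least $2\epsilon$ and pairwise Kullback--Leibler divergences at most $c\log N/n$, Fano yields a minimax lower bound of order $\epsilon^2$; so it suffices to exhibit such a family with $\epsilon^2 \asymp n^{-2/3}(\log n)^{(d-1)/3}$.

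The construction uses the correspondence \eqref{altrep} identifying SMU densities with survival functions of nonnegative measures. I start from the base $p_0 \equiv M^{-d}$ on $[0,M]^d$ (which lies in $\smud$, realized by $G = \delta_{(M,\dots,M)}$) and, for grid points $\theta^{(1)},\ldots,\theta^{(m)}$ in $(0,M]^d$ and $\sigma \in \{0,1\}^m$, form
\[
p_\sigma = (1-\eta) p_0 + \frac{\eta}{|\mathcal I_\sigma|} \sum_{k \in \mathcal I_\sigma} q_{\theta^{(k)}}, \qquad \mathcal I_\sigma := \{k : \sigma_k = 1\},
\]
where $q_\theta$ is the uniform density on the corner box $[0,\theta_1]\times\cdots\times[0,\theta_d]$. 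Each $p_\sigma$ is a convex combination of SMU densities and hence lies in $\smud$; choosing $\eta$ small and restricting the $\theta^{(k)}$ to a region where $q_\theta$ is of order $p_0$ (feasible precisely because $b$ is small and $B, M$ are large) keeps all $p_\sigma$ in $[b, B]$ uniformly. The logarithmic factor then emerges by placing the $\theta^{(k)}$ on a multi-scale grid spanning $\Theta(\log n)$ dyadic scales in each of $d-1$ independent coordinate directions, mirroring the combinatorial structure underlying Gao's entropy bound (Theorem \ref{gaoL2}) and its matching packing lower bound for classes of survival functions. A careful selection yields $m \asymp \epsilon^{-1}(\log 1/\epsilon)^{(d-1)/2}$ grid points whose associated bumps $q_{\theta^{(k)}} - p_0$ are nearly $L_2$-orthogonal.

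A Varshamov--Gilbert argument then extracts a subfamily of cardinality $N \geq 2^{c m}$ with pairwise Hamming distance at least $m/4$, on which the near-orthogonality of the bumps yields $h^2(p_\sigma, p_{\sigma'}) \gtrsim \eta^2$, while the lower bound $p_\sigma \geq b$ gives $\mathrm{KL}(p_\sigma, p_{\sigma'}) \lesssim \eta^2/b$ via the standard $L_2$-to-KL comparison. Solving the Fano balance $\log N \asymp m \asymp n \epsilon^2$ with $\epsilon \asymp \eta$ produces $\epsilon^2 \asymp n^{-2/3}(\log n)^{(d-1)/3}$, as claimed. The main obstacle is the combinatorial step: constructing the $m$ near-orthogonal multi-scale bumps inside $\smud$ requires spreading corner rectangles across $d-1$ independent logarithmic scales while decoupling their $L_2$ inner products, an argument closely related to the packing lower bound that matches Gao's entropy estimate. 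Once the packing is in place, the KL and Hellinger comparisons (via the lower bound $b$) and the Fano calculation are routine.
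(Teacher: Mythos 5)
Your proposal matches the paper's high-level strategy — Fano plus a Varshamov--Gilbert packing built from multi-scale perturbations of a base density in $\smud$, with the Fano balance yielding exactly the claimed rate $n^{-2/3}(\log n)^{(d-1)/3}$ — but the step you flag as ``the main obstacle'' is in fact a genuine gap, and the specific construction you sketch would not close it. Your perturbations are corner-box differences $q_{\theta^{(k)}} - p_0$ where $q_\theta$ is uniform on $[0,\theta_1]\times\cdots\times[0,\theta_d]$. Every such box contains the origin, so any two of these perturbations overlap on a set of positive measure comparable to the smaller box; as a result the $L_2$ inner products $\langle q_{\theta^{(k)}}-p_0,\,q_{\theta^{(k')}}-p_0\rangle$ are not small, and no choice of dyadic grid for the $\theta^{(k)}$ can make them so. The separation estimate $h^2(p_\sigma,p_{\sigma'}) \gtrsim \eta^2$ therefore does not follow, and the Fano balance collapses. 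Nor is there an off-the-shelf matching packing lower bound for Gao's Theorem~\ref{gaoL2} that you could import here to sidestep the issue.

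The paper's proof resolves exactly this difficulty, and the mechanism is worth noting because it is not a variant of your construction but a different one. Instead of perturbing the mixing measure $G$ by point masses (which gives globally supported density perturbations), the paper perturbs the \emph{density} of $G$ by rescaled shifted Legendre polynomials $s_{m,i}(\theta) = \lf_2(2^m\theta - i)$. Because $\lf_2$ is orthogonal to constants and to linear functions (Lemma~\ref{legendrelem}), its antiderivative $A_{m,i}(x)=\int_x^{(i+1)2^{-m}}s_{m,i}$ vanishes outside the dyadic interval $I_{m,i}=[i2^{-m},(i+1)2^{-m}]$. The resulting density perturbations $\prod_j A_{m_j,i_j}(x_j)$ are therefore \emph{compactly supported} on dyadic boxes: at a fixed multiscale $M=(m_1,\dots,m_d)$ they have disjoint supports and are exactly $L_2$-orthogonal, and across scales $M\neq\tilde M$ the cross terms are controlled by Lemma~\ref{crossprod}, using the explicit cancellation properties of $\lf_1,\lf_3$ and the choice $c_d=2d$ to make the cross contribution a small fraction of the diagonal. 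This compact support is precisely what corner boxes can never give you. If you want to pursue your route, you would need to replace the $q_\theta$'s by signed combinations of corner-box indicators whose second and third moments vanish — which, after unwinding, is essentially the Legendre construction the paper uses.
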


In the next result, we prove that the rate of convergence of $\smumle$
can be  faster when $p_0$ is piecewise constant on a finite number of
bounded rectangles. This reveals adaptive risk properties of
$\smumle$. 

\begin{theorem}\label{adaptation}
Suppose $p_0 \in \smud$ is of the form
\[
p_0(x) = \sum_{j=1}^m p_j \one\{\bm{x} \in R_j\}
\]
where $R_j$ is a d-dimensional rectangle of the form $R_j = [a_{j1},
b_{j1}] \times \ldots \times [a_{jd},b_{jd}] \in \mathbb{R}^d$ for
$j=1, \ldots, m$. Also suppose that $|R_j \cap R_{j'}| =0$ for $j \neq
j'$. Then there exists $C_{d} \in (0,
\infty)$ depending only on $d$ such that  
\[
\mathbb{E} h^2(p_0, \smumle) \leq C_{d} \frac{m}{n} (\log n)^{(8/3)(2d-1)}.
\]
\end{theorem}
Clearly when $m$ is of constant order, the rate given by Theorem
\ref{adaptation} is much faster than the minimax lower bound $n^{-2/3}
(\log n)^{(d-1)/3}$. Observe that no lower
bound assumption on values $p_1, \dots, p_m$ of $p_0$ on the $m$
rectangles is needed for Theorem \ref{adaptation}.

We would like to emphasize that all our Hellinger rate results apply to the estimator $\smumle$ which is the MLE
over the entire class $\smud$. In other words, even though we make
some assumptions on $p_0$ (such as compact support and boundedness in
Corollary \ref{conjlowbou}, and rectangular piecewise constant in
Theorem \ref{adaptation}), 
the estimator analyzed is still the MLE over all the densities in
$\smud$. This makes the proofs 
of these results nontrivial. 

%We follow the strategy outlined near the end of Section \ref{sec:general} which require bounding expected suprema of an empirical process over sets of the form $\{p \in \smud: h(p_0, p) \leq t\}$. The boundedness assumptions on $p_0$ imply that the densities in this set are also bounded in parts of the domain that are away from the boundary. This allows the use of bracketing entropy bounds in these parts of the domain together with the simpler bound \eqref{trivialb} near the boundary. 

%\section{Improvements of Theorem \ref{conjlowbou}}\label{improv} 

\section{Summary and Discussion}\label{sumdisc}

\subsection{Summary of main results and the key proof idea}
In this paper, we proved Hellinger risk results for the nonparametric
maximum likelihood estimator $\smumle$ over the class of SMU densities
$\smud$. Our main result (Corollary \ref{conjlowbou}) proves the rate
$n^{-2/3} (\log n)^{\gamma_d}$ for $h^2(p_0, \smumle)$ provided the true
density $p_0 \in \smud$ satisfies the three assumptions: CS, UB and
LB. The LB assumption can be relaxed to an $L_{\qh}$ assumption on
$p_0^{-1}$ (see Corollaries \ref{conjecture} and \ref{finitelq}). We also
proved a more 
abstract result (Theorem \ref{jrec}) which requires boundedness of
$p_0$ over smaller subrectangles instead of the full domain. We
demonstrated in Proposition \ref{prodsmugen} how
this abstract result can be used in the absence of 
the lower bound restriction for densities $p_0$ which are not far
from product densities in the sense of \eqref{prodsmugencon}. We also
proved a minimax lower bound (Theorem \ref{lowerbound}) which matches
the rate in Corollary \ref{conjlowbou} up to logarithmic factors, and an
adaptation result (Theorem \ref{adaptation}) which proves near
parametric rates for piecewise constant densities $p_0$ in $\smud$.

Our bounds for $h^2(p_0, \smumle)$ are all based on upper bounds for (see e.g., Subsection \ref{init_steps}):
\begin{equation*}
  \E \sup_{p \in \smud: h(p_0, p) \leq t} \int \frac{p_0 - p}{p_0 + p}
  d(P_0 - P_n).  
\end{equation*}
Bounding the above expected supremum requires bracketing entropy
bounds on the functions $(p_0 - p)/(p_0 + p)$. In order to modify
available bracketing entropy bounds for distribution functions
\cite{gao2013book}, we convert distances between these transformed
functions $(p_0 - p)/(p_0 + p)$ to distances in terms of the original
densities $p$. The following inequality (from the proof of Lemma
\ref{trivial}) is our main tool here: 
\begin{equation}\label{discl2}
  \begin{split}
&\int_{R} \left( \frac{p_0-p_L}{p_0+p_L}-\frac{p_0-p_U}{p_0+p_U}
  \right)^2 p_0 \\ &= 4\int_{R} \frac{p_0^3
                  (p_U-p_L)^2}{(p_0+p_L)^2(p_0+p_U)^2} 
\leq 4 \left\{\int_R \left(p_U -
                     p_L \right)^{2\ph} \right\}^{1/\ph} \|p_0^{-1}\|_{L_{\qh}(R)}.
\end{split}                     
\end{equation}
This inequality involves $\|p_0^{-1}\|_{L_{\qh}(R)} <
\infty$, and this is the reason for the presence of this term in Theorem \ref{jrec} and 
Corollaries \ref{conjecture}, \ref{finitelq} and \ref{conjlowbou}.

\subsection{Hellinger bracketing numbers}

Let us discuss here a possible alternative approach to bounding the
left hand side of \eqref{discl2}. This involves the inequality:
\begin{equation}\label{dischell}
  \begin{split}
&  \int_{R} \left( \frac{p_0-p_L}{p_0+p_L}-\frac{p_0-p_U}{p_0+p_U}
  \right)^2 p_0 \\ &= 4\int_{R} \frac{p_0^3
                  (p_U-p_L)^2}{(p_0+p_L)^2(p_0+p_U)^2} 
    \leq 16 \int_R \left(\sqrt{p_U} - \sqrt{p_L} \right)^2.
  \end{split}    
\end{equation}
Unlike \eqref{discl2}, the right hand side of \eqref{dischell} does
not involve any norm on $p_0^{-1}$. Instead, it involves the Hellinger
distance between $p_L$ and $p_U$ on the set $R$. If we replace
\eqref{discl2} by \eqref{dischell} in the proof of 
Lemma \ref{trivial}, we would obtain the following bound instead of
\eqref{brack.lqbound}:
\begin{align}\label{brack.hellbound}
  \begin{split}
 &     N_{[]}\left(\epsilon, \left\{\frac{p_0 - p}{p_0 + p} \one(R) : p
          \in \smud, h(p_0, p) \leq t \right\}, L_2(P_0) \right) \\ &\leq N_{[]}
      \left(\frac{\epsilon}{4}, \left\{p
    \one(R) : p \in \smud, h(p_0, p) \leq t \right\}, h \right).
  \end{split}
  \end{align}
%where the right hand side is the bracketing number in the Hellinger
%distance. 
Unfortunately, we are unable to use the inequality
\eqref{brack.hellbound} because we do not quite know how to bound this
Hellinger bracketing number. The key challenge here is to prove
an analogue of Theorem \ref{gaoL2} for Hellinger bracketing. Hellinger
distance for distribution functions of subprobability measures on $[0,
1]^d$ is larger (up to a factor of $1/2$) than the $L_2$ distance
because:
\begin{align*}
h^2(F_1, F_2) &:= \int_{[0, 1]^d} \left(\sqrt{F_1(x)} - \sqrt{F_2(x)}
  \right)^2 dx \\ &= \int_{[0, 1]^d} \frac{(F_1(x) - F_2(x))^2}{\left(\sqrt{F_1(x)} + \sqrt{F_2(x)}
  \right)^2} dx \geq \frac{1}{4} \int_{[0, 1]^d} \left(F_1(x) - F_2(x)
  \right)^2 dx, 
\end{align*}
where we used the fact that $F_1$ and $F_2$ are nonnegative functions
that are upper bounded by  
$1$ on $[0, 1]^d$. Because of this, it is not clear if Theorem
\ref{gaoL2} will continue to hold if the $L_2$ metric is replaced by
the Hellinger metric. However if this stronger result can be proved,
then no condition on the size of $p_0^{-1}$ will be necessary, and
this will allow one to establish the PW conjecture without additional
assumptions on the size of $p_0^{-1}$.

\subsection{Minimax rates}
A related issue that we have not resolved in this paper concerns the
minimax rate. Corollary \ref{conjlowbou} and Theorem \ref{lowerbound} together
show that the minimax rate 
(in squared Hellinger distance) for the class $\mathcal{P}_{\mathrm{SMU}}([0,M]^d,b, B)$
(consisting of all densities in $\smud$ that are supported on $[0,
M]^d$ and are bounded from above by $B$ and below by $b$) is of the
order $n^{-2/3}$ with a multiplicative factor that lies between $(\log
n)^{(d-1)/3}$ and $(\log n)^{4d-2}$. It is natural to ask here for the
minimax rate without the lower bound constraint; in other words, what
is the minimax rate for $\mathcal{P}_{\mathrm{SMU}}([0,M]^d,b = 0, B)$.

It is obvious that the minimax rate for $\mathcal{P}_{\mathrm{SMU}}([0,M]^d,b = 0, B)$ will be larger than the rate for $\mathcal{P}_{\mathrm{SMU}}([0,M]^d,b, B)$ for $b > 0$. It is unclear however if the former minimax rate will be of a strictly larger order than the latter rate. If the former minimax rate is also  $n^{-2/3}$ with logarithmic factors, it would give a strong
indication that the MLE $\smumle$ will achieve the $n^{-2/3}(\log
n)^{\gamma_d}$ rate without any additional lower bound
assumptions on $p_0$. On the other hand, if the minimax rate were to become
significantly slower, then obviously conditions on $p_0^{-1}$ are necessary for the
PW conjecture to hold. We highlight the determination of the minimax rate
for $\mathcal{P}_{\mathrm{SMU}}([0,M]^d,b = 0, B)$ as an open question. 

\subsection{The univariate ($d = 1$) case}
Specialized to $d = 1$, our results lead to superfluous logarithmic
factors multiplying the expected $n^{-2/3}$ rate for
$\smumle$ (which coincides with the Grenander estimator). For example,
specializing Proposition 
\ref{prodsmugen} to $d = 1$, we get that the rate of convergence of
$\smumle$ equals $n^{-2/3} (\log n)^2 (\log \log n)^2$ when $p_0$ is
concentrated on $[0, M]$ and bounded from above by $B$ for two positive
constants $B$ and $M$ (note that \eqref{prodsmugencon} is
automatically satisfied as $d = 1$). It turns out that our argument
(specifically Proposition \ref{mainhrbound}) 
can be sharpened for $d = 1$ which eliminates the additional $(\log
n)^2$ factor leading to the following result (proved in Subsection
\ref{proofs_1d}).  

\begin{proposition}\label{1dimproved}
    Suppose $p_0 \in \mathcal{P}_{\text{SMU}}(1)$ is concentrated on
    $[0, M]$ and bounded from above by $B$. Then there exists $C_{B,
      M} \in (0, \infty)$ such that for every $n \geq 3$
    \begin{equation}\label{1dimproved_eq}
        \mathbb{E} h^2\left(p_0, \hat{p}_{n, 1}^{\text{SMU}} \right)
        \leq C_{B, M} (\log \log n)^{2} n^{-2/3}.  
    \end{equation}
  \end{proposition}
It appears that it may not be possible to remove the $(\log \log n)^2$
factor using our proof techniques. \cite[Example 7.4.2]{VandegeerBook}
uses a different technique to achieve the univariate rate $n^{-2/3}$ without any
redundant logarithmic factors. For completeness, we state this result
below and include a proof in Subsection \ref{proofs_1d}. Note that
\cite[Theorem 7.12]{VandegeerBook} is an even stronger result that
replaces the boundedness and compact support assumptions by moment
restrictions on $p_0$. 

\begin{proposition}[van de Geer]\label{1d_vandegeer}
    Suppose $p_0 \in \mathcal{P}_{\text{SMU}}(1)$ is concentrated on $[0, M]$ and bounded from above by $B$. Then there exists $C_{B, M} \in (0, \infty)$ such that
    \begin{equation}\label{1dvandegeer_eq}
        \mathbb{E} h^2\left(p_0, \hat{p}_{n, 1}^{\text{SMU}} \right) \leq C_{B, M} n^{-2/3}. 
    \end{equation}
\end{proposition}
Although our proof of Proposition
\ref{1d_vandegeer}  differs slightly
from that in \cite[Example 7.4.2]{VandegeerBook}---being based on
Theorem \ref{hellexp}---it employs the same central idea: exploiting
the fact that the function 
\begin{equation}\label{keymon}
   x \mapsto \frac{p(x) p_0(x)}{p(x) + p_0(x)}
\end{equation}
is non-increasing on $[0, M]$ for every $p, p_0 \in
\mathcal{P}_{\text{SMU}}(1)$. This monotonicity follows since the
right-hand side of \eqref{keymon} equals $(p_0^{-1}(x) +
p^{-1}(x))^{-1}$, where both $p^{-1}(x)$ and $p_0^{-1}(x)$ are
non-decreasing in $x$. This key observation allows control of the
$L_2(P_0)$ bracketing numbers of \eqref{brackentclass} (for
$\mathcal{P} = \mathcal{P}_{\text{SMU}}(1)$) directly using bracketing
numbers for non-increasing functions. 

However, when $d \geq 2$, while the function in \eqref{keymon} remains
coordinate-wise non-increasing, this property alone is insufficient to
achieve our desired rate for $\smumle$  which is $n^{-2/3} (\log
n)^{O(d)}$  because the bracketing numbers of coordinate-wise
non-increasing functions are quite large (see e.g.,
\cite{GW07}). Obtaining this rate requires additional structural
constraints on \eqref{keymon} (analogous to \eqref{smupd}), but it
remains unclear how such constraints might be derived. Consequently,
the univariate approach from \cite{VandegeerBook} does not readily
extend to the multivariate case $d \geq 2$.

To summarize, our argument bounds the bracketing entropy numbers of
\eqref{brackentclass} in terms of bracketing entropy numbers of SMU
densities $p$. On the other hand, in the univariate case, it is
possible to control the entropy numbers of \eqref{brackentclass}
directly. This direct method seems infeasible for $d \geq 2$. 

\subsection{When the domain is $[0, M_1] \times \dots \times [0, M_d]$}\label{domain}
In our Hellinger rate results: Corollaries \ref{conjecture}, \ref{finitelq} and \ref{conjlowbou}, we assumed that the domain of $p_0$ is $[0, M]^d$. As described below, these results also hold when the domain of $p_0$ is $[0,M_1] \times \ldots \times [0,M_d]$ with an appropriate modification of the underlying constants. We focus on Corollary \ref{conjlowbou} for simplicity. Suppose $p_0$ is
supported on $[0, M_1] \times \dots \times [0, M_d]$ and we are given
i.i.d observations $X_1, \dots, X_n$ from $p_0$. For each $i$, define
\begin{equation*}
  \tilde{X}_i := \left(X_{i1}/M_1, \dots, X_{id}/M_d \right)
\end{equation*}
where $X_{i1}, \dots, X_{id}$ denote the coordinates of $X_i$. Then it
is clear that
\begin{equation*}
  \tilde{X}_{1}, \dots, \tilde{X}_n \overset{\text{i.i.d}}{\sim}
  \tilde{p}_0  \qt{where $\tilde{p}_0(u_1, \dots, u_d) = p_0(u_1M_1,
    \dots, u_dM_d) M_1\dots M_d$}. 
\end{equation*}
$\tilde{p}_0$ is clearly concentrated on $[0, 1]^d$ and it is an SMU density (see the proof in Subsection \ref{domain_claims}). Also, $\tilde{p}_0$ is bounded
from above by $B M_1 \dots M_d$ and from below by $b M_1 \dots
M_d$. Let $\tilde{p}^{\text{SMU}}_{n, d}$ denote the SMU MLE for the
data $\tilde{X}_1, \dots, \tilde{X}_n$. Then Corollary 4.4 implies
that 
\begin{equation}\label{tilbound}
  \E h^2 \left(\tilde{p}_0, \tilde{p}^{\text{SMU}}_{n, d} \right) \leq
  C_{d, B, M_1, \dots, M_d, b} n^{-2/3} (\log 
n)^{4d-2}. 
\end{equation}
Next we observe that
\begin{equation}\label{mle_scaling}
  \tilde{p}^{\text{SMU}}_{n, d} (u_1, \dots, u_d) =
  \hat{p}^{\text{SMU}}_{n, d}(u_1 M_1, \dots, u_d M_d) M_1 \dots M_d
\end{equation}
where $\hat{p}^{\text{SMU}}_{n, d}$ is the SMU MLE based on the
original data $X_1, \dots, X_n$. The proof of \eqref{mle_scaling} is given in Subsection \ref{domain_claims}. Then
it follows by scale invariance of the Hellinger distance that
\begin{align*}
  h^2  \left(\tilde{p}_0, \tilde{p}^{\text{SMU}}_{n, d} \right)& =
  h^2\left(p_0(\cdot M_1, 
    \dots, \cdot M_d) M_1\dots M_d, \hat{p}^{\text{SMU}}_{n, d}(\cdot
    M_1, \dots, \cdot M_d) M_1 \dots M_d \right)\\
    &= h^2 (p_0,
  \hat{p}^{\text{SMU}}_{n, d}) 
\end{align*}
which proves that $h^2 (p_0,
  \hat{p}^{\text{SMU}}_{n, d}) $ is also bounded by the right hand
  side of \eqref{tilbound}. 

The minimax lower bound in Theorem \ref{lowerbound} can also be similarly extended to the case with domain $[0, M_1] \times \dots \times [0, M_d]$. The construction of $f_\alpha(\bm x)$ for $\bm x \in
[0,1]^d$ and $b \leq f_\alpha \leq B$ in the proof of Theorem \ref{lowerbound} is modified by
considering $\tilde f_\alpha(\bm x) =\frac{1}{M_1 \ldots M_d} f_\alpha(x_1/M_1, x_2/M_2, \ldots, x_d/M_d)$ where
$\bm x \in [0,M_1] \times [0,M_2] \times \ldots \times [0,M_d]$. Because $\frac{1}{M_1 \ldots M_d} b\leq \tilde f_\alpha(\bm x) \leq
\frac{1}{M_1 \ldots M_d} B$, the minimax lower bound we obtain still holds with a constant $c$ which depends on $d,b,B,M_1, \ldots, M_d$. Thus without loss of generality, we can let $M_1 = \ldots = M_d = 1$, $b=1/2$, and let $B=3/2$ as used in Theorem \ref{lowerbound}.

\section{Computational details and numerical experiments}\label{sec:comp}

We focused mainly on the theoretical convergence rates of the MLE $\smumle$ in this paper. Computational details, which also do not seem to have been studied previously in the literature, are discussed in this section. Using the expression \eqref{pwsmudef} for $p$ in \eqref{smumle_def}, it follows that the optimization problem underlying the MLE involves maximization of: 
\begin{equation}\label{G_opt}
   \frac{1}{n} \sum_{i=1}^n \log \left(\int_0^{\infty}  \dots
  \int_0^{\infty} p_{\mathrm{Unif(0, 
      \theta_1]}}(x_{i1}) \dots
  p_{\mathrm{Unif(0, \theta_d]}}(x_{id})  dG(\theta_1,  \dots,
    \theta_d) \right)
\end{equation}
over all probability measures $G$ on $(0, \infty)^d$ (here $x_{i1}, \dots, x_{id}$ denote the coordinates of the $i^{th}$ data point $x_i$). 

This maximization is a convex optimization problem as the objective function \eqref{G_opt} is concave in $G$ and the constraint set is the space of all probability measures on $(0,\infty)^d$ which is a convex class. However it is an \textit{infinite-dimensional} optimization problem 
as the optimization variable $G$ takes values in the infinite-dimensional set of all probability measures on $(0, \infty)^d$. To solve it,
we need to reduce $G$ to be a probability measure in a finite-dimensional space. This can be done using results of \cite[Section
3]{pavlides2012nonparametric} which ensure that $G$ can be restricted
to be a discrete probability measure that is supported on the
\textit{rectangular grid} generated by the data $x_1, \dots, x_n$. The rectangular grid generated by the data points $x_i := (x_{i1},
\dots, x_{id})$ for $i = 1, \dots, n$ is given by
\begin{equation}\label{rect_grid}
  A := \left\{(x_{(i_1), 1}, x_{(i_2), 2}, \dots, x_{(i_d), d}) : i_1,
  \dots, i_d \in \{1, \dots, n\}\right\}
\end{equation}
where $x_{(i),j}$ denotes the $i^{th}$ smallest element among $x_{1j},
\dots, x_{nj}$ for $1 \leq i \leq n, 1 \le j \le d$. Restricting $G$ in \eqref{G_opt} to be supported on $A$, we get
\begin{equation*}
\argmax_{\{w_{\theta}, \theta \in A\}: w_{\theta} \geq 0, \sum_{\theta
  \in A} w_{\theta} = 1} \frac{1}{n}\sum_{i=1}^n \log
\left(\sum_{\theta = (\theta_1, \dots, \theta_d) \in A} \frac{I\{x_{i1} \leq \theta_1, \dots, x_{id} \leq
    \theta_d\}}{\theta_1 \dots \theta_d} w_{\theta} \right)
\end{equation*}
This is a finite-dimensional optimization problem that can be solved using standard software for convex optimization. Here is the overall algorithm: 
%\begin{enumerate}
%\item Input is $n$ points $x_1, \dots, x_n$ in $(0, \infty)^d$.
%\item Create the rectangular grid \eqref{rect_grid} generated %by $x_1, \dots, x_n$. Suppose the
%elements of $A$ are given by $\theta^{(1)}, \dots, \theta^{(N)}$ for
%some $N \geq 1$.
%\item We now obtain weights $\hat{w}_1, \dots, \hat{w}_N$ (where
 % $\hat{w}_j$ corresponds to $\theta^{(j)}$) by solving the %convex
 % optimization problem: 
%\begin{equation*}
%\argmax_{w_j \geq 0, \sum_{j=1}^N w_j = 1} \frac{1}{n}\sum_{i=1}^n \log
%\left(\sum_{j = 1}^N w_j \frac{I\{x_{i1} \leq \theta_{j1}, \dots, x_{id} \leq
%    \theta_{jd}\}}{\theta_{j1} \dots \theta_{jd}}  \right)
%\end{equation*}
%where $(\theta_{j1}, \dots, \theta_{jd})$ denote the elements of the
%vector $\theta^{(j)}$.
%\item The MLE is given by
%  \begin{equation*}
%    \smumle(x) := \sum_{j = 1}^N \hat{w}_j \frac{I\{x_{1} \leq \theta_{j1}, \dots, x_{d} \leq
%    \theta_{jd}\}}{\theta_{j1} \dots \theta_{jd}}. 
%  \end{equation*}
%\end{enumerate}

\begin{algorithm}[htbp]
\caption{SMU MLE Exact Algorithm}
\label{alg:smu_mle}
\begin{algorithmic}[1]    % “1” switches on line-numbers
  \Require $n$ data points $x_1,\dots,x_n \in (0,\infty)^d$
  \Ensure SMU MLE $\hat f_{\mathrm{SMU}}(x)$

  \State Construct the rectangular grid \eqref{rect_grid} generated by $\{x_i\}_{i=1}^n$; denote its vertices by $\theta^{(1)},\dots,\theta^{(N)}$.
  \State Obtain weights $\hat w_1,\dots,\hat w_N$ by solving
        \[
          \max_{\substack{w_j\ge 0\\ \sum_{j=1}^N w_j = 1}}
          \frac1n\sum_{i=1}^n
          \log\!\Bigl(
            \sum_{j=1}^{N} w_j\,
            \frac{\mathbf 1\{x_{i1}\le\theta_{j1},\dots,
                         x_{id}\le\theta_{jd}\}}
                 {\theta_{j1}\cdots\theta_{jd}}
          \Bigr),
        \]
        where $\theta^{(j)}=(\theta_{j1},\ldots,\theta_{jd})$.
  \State \textbf{Return}
        \[
          \hat f_{\mathrm{SMU}}(x)\;=\;
          \sum_{j=1}^N
            \hat w_j\,
            \frac{\mathbf 1\{x_1\le\theta_{j1},\dots,
                            x_d\le\theta_{jd}\}}
                 {\theta_{j1}\cdots\theta_{jd}}.
        \]
\end{algorithmic}
\end{algorithm}

The computational cost of Algorithm \ref{alg:smu_mle} grows with the size of 
$N$. In the worst-case scenario, $N$ can be as large as $n^d$. When the dimension is modest—say $d = 2$—and 
$n$ remains moderate, an exact implementation is still practical. As either $d$ or $n$ increases, however, the workload rises sharply, and an exact computation soon becomes infeasible.

\begin{figure}[htbp]
  \centering
  % Adjust width as needed (e.g., 0.8\linewidth, 10cm, etc.)
  \includegraphics[width=\linewidth]{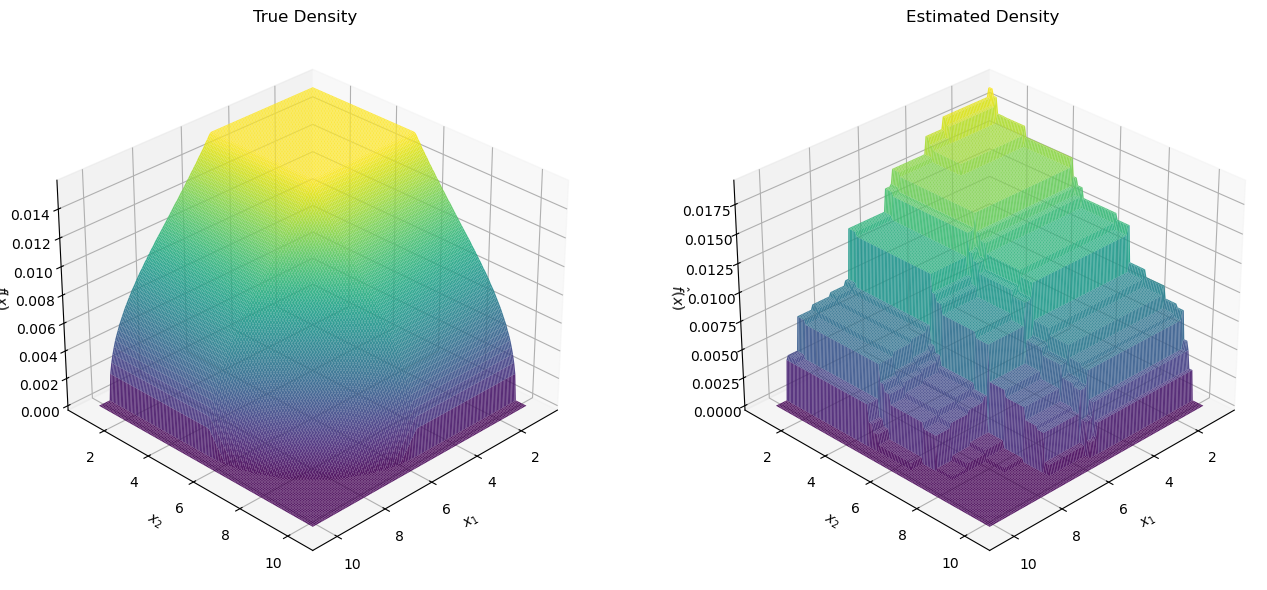}
  \caption{True density (left panel) $p_0$ and Estimated density computed using Algorithm \ref{alg:smu_mle} from $n = 400$ points drawn from $p_0$. Here $p_0$ is given by \eqref{pwsmudef} with $G$ taken to be the discrete uniform distribution on $\{\vartheta_1,\dots, \vartheta_n\}$ where $\vartheta_j = (5 + 5 \cos(\pi j/(2n)), 5 + 5 \sin (\pi j /(2n)))$}
  \label{fig:plot:data_estimate_fullmethod_n400}
\end{figure}

Figure \ref{fig:plot:data_estimate_fullmethod_n400} shows a true density in $\smud$ (left panel) along with $\smumle$ (right panel) computed, using Algorithm \ref{alg:smu_mle}, from $n = 400$ data points drawn from the true density. 

When $n$ is larger (even with $d = 2$), Algorithm \ref{alg:smu_mle} becomes computationally infeasible because of the large size of the rectangular grid. In such cases, it is natural to take a smaller set of points $\theta^{(1)}, \dots, \theta^{(N)}$ in Step 2. Motivated by the exemplar algorithm from Gaussian location density mixture fitting (see e.g., \cite{lashkari2007convex, bohning1999computer, soloff2025multivariate}), it is natural to take $\theta^{(j)} = x_j$ for $j = 1, \dots, n$. In other words, we take the $\theta$-vectors to be just the data points. This significantly reduces the number of $\theta$-vectors (from $N$, which can be as large as $n^d$, to $n$) and allows computation. However, the resulting density estimate will only be an approximate MLE. This algorithm is summarized below.

\begin{algorithm}[htbp]
\caption{SMU MLE Approximate Algorithm}
\label{alg:smu_mle_approx}
\begin{algorithmic}[1]    % “1” switches on line-numbers
  \Require $n$ data points $x_1,\dots,x_n \in (0,\infty)^d$
  \Ensure An approximate SMU MLE $\hat f_{\mathrm{SMU-APPROX}}(x)$

  \State Take $\theta^{(i)} = x_i$ for $i = 1, \dots, n$. 
  \State Obtain weights $\hat w_1,\dots,\hat w_n$ by solving
        \[
          \max_{\substack{w_j\ge 0\\ \sum_{j=1}^n w_j = 1}}
          \frac1n\sum_{i=1}^n
          \log\!\Bigl(
            \sum_{j=1}^{n} w_j\,
            \frac{\mathbf 1\{x_{i1}\le\theta_{j1},\dots,
                         x_{id}\le\theta_{jd}\}}
                 {\theta_{j1}\cdots\theta_{jd}}
          \Bigr),
        \]
        where $\theta^{(j)}=(\theta_{j1},\ldots,\theta_{jd})$.
  \State \textbf{Return}
        \[
          \hat f_{\mathrm{SMU-APPROX}}(x)\;=\;
          \sum_{j=1}^n
            \hat w_j\,
            \frac{\mathbf 1\{x_1\le\theta_{j1},\dots,
                            x_d\le\theta_{jd}\}}
                 {\theta_{j1}\cdots\theta_{jd}}.
        \]
\end{algorithmic}
\end{algorithm}
Figure \ref{fig:plot:data_estimate_approximate_n2000} shows the result of applying Algorithm \ref{alg:smu_mle_approx} to $n = 2000$ data points drawn from the same true density $p_0$ as in Figure \ref{fig:plot:data_estimate_fullmethod_n400}. 
\begin{figure}[htbp]
  \centering
  % Adjust width as needed (e.g., 0.8\linewidth, 10cm, etc.)
  \includegraphics[width=\linewidth]{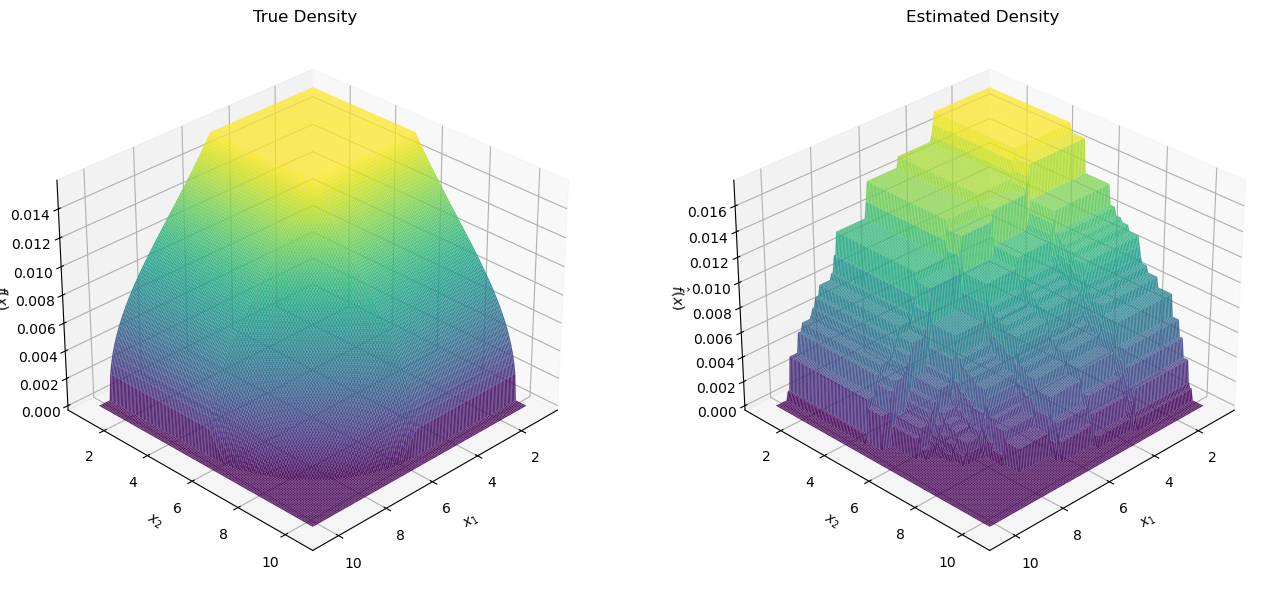}
  \caption{True density (left panel) $p_0$ (same as in Figure \ref{fig:plot:data_estimate_fullmethod_n400}) and estimated density computed using Algorithm \ref{alg:smu_mle_approx} from $n = 2000$ points drawn from $p_0$}
  \label{fig:plot:data_estimate_approximate_n2000}
\end{figure}

\paragraph{Application to leukaemia gene‐expression data}

We applied our method to the paired $p$-values obtained from the seminal
leukaemia micro-array study of Golub \emph{et al.}\,\cite{golub1999molecular}.
Their experiment profiled mRNA abundance in childhood leukaemia samples
using the Affymetrix \texttt{HGU‐6800} array, which reports expression
levels for 7\,129 probe sets (roughly one per gene).  
The data are publicly available on OpenML (data‐id~1104) and contain
expression measurements for 72 patients, each annotated with two
clinically relevant labels:

\begin{enumerate}
  \item \textbf{Disease subtype:} acute lymphoblastic leukaemia (ALL, 47 samples) vs.\ acute myeloid leukaemia (AML, 25 samples);
  \item \textbf{Sampling tissue:} bone marrow (BM, 24 samples) vs.\ peripheral blood (PB, 48 samples).
\end{enumerate}

For every gene we performed \emph{two} two‐sided Welch $t$‐tests—
splitting the samples according to each label—to obtain the following
hypothesis tests and their corresponding $p$-values:
\begin{enumerate}
  \item \textbf{ALL vs.\ AML:} compares the mean log‐expression of a gene across the two disease subtypes;
  \item \textbf{BM vs.\ PB:} compares the same gene across the two sampling tissues.
\end{enumerate}
Thus each of the 7\,129 genes contributes a pair of $p$-values.  These
pairs are shown in Figure~\ref{fig:pvalues}, and we fit an SMU density to
this bivariate $p$-value samples. Here is an argument for why the SMU model is suitable here. Under a true null hypothesis, the $p$-value is uniformly distributed on
$(0,1]$.  Under an alternative, it tends to be small, and a
$\mathrm{Unif}(0,\theta]$ distribution with $\theta\ll1$ is a reasonable
approximation \cite{strimmer2008unified}.  For each gene $i$ we therefore
associate a latent vector $\theta_i=(\theta_{i1},\theta_{i2})$ and model
\begin{equation}
  x_{ij}\,\stackrel{\mathrm{ind}}{\sim}\,
  \mathrm{Unif}\bigl(0,\theta_{ij}\bigr],\qquad j=1,2,
  \label{eq:unif_model}
\end{equation}
where $\theta_{ij}=1$ if gene~$i$ is null for hypothesis~$j$ and
$\theta_{ij}\ll1$ otherwise.  Assuming the genes are a priori
exchangeable, we posit
\begin{equation}
  \theta_1,\dots,\theta_n
  \,\stackrel{\text{i.i.d.}}{\sim}\,G
  \label{eq:theta_prior}
\end{equation}
for some mixing measure~$G$.  Combining
\eqref{eq:unif_model}–\eqref{eq:theta_prior} yields the piecewise‐SMU
model in~\eqref{pwsmudef}.

\begin{figure}[htbp]
  \centering
  \includegraphics[width=0.8\linewidth]{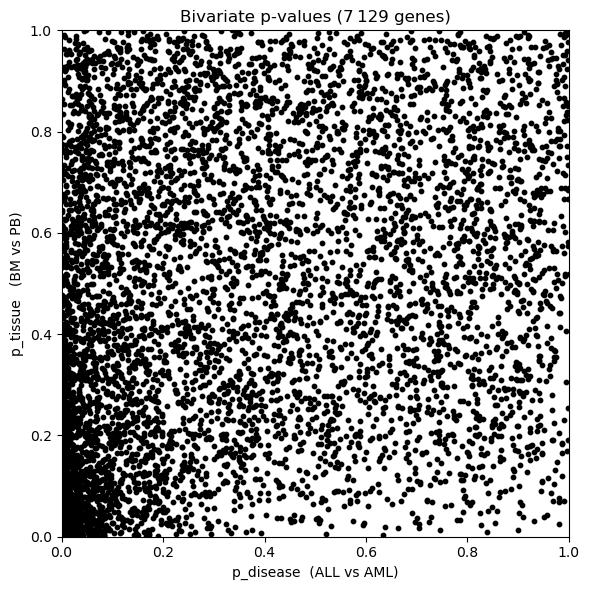}
  \caption{7129 $p$-values obtained from the micro-array dataset of \cite{golub1999molecular}. Each point in this plot corresponds to one gene, and represents a pair of $p$-values.}
  \label{fig:pvalues}
\end{figure}

We used Algorithm~\ref{alg:smu_mle_approx} to compute the approximate
SMU maximum‐likelihood estimator (MLE).  The resulting density estimate
is shown in Figure~\ref{fig:smu_pvalues}.

Figure~\ref{fig:smu_pvalues} reveals a mixture of a uniform component
and a component concentrating near the origin, the latter capturing the
non‐null $p$-values.

We would like to emphasize that the density estimates shown in Figures \ref{fig:plot:data_estimate_fullmethod_n400}, \ref{fig:plot:data_estimate_approximate_n2000} and \ref{fig:smu_pvalues} do not involve any tuning parameters (unlike other density estimation techniques such as those based on kernel density estimation). This is an important advantage of methods such as $\smumle$ which are based on shape constraints. 

In Figures \ref{fig:plot:data_estimate_fullmethod_n400}, \ref{fig:plot:data_estimate_approximate_n2000} and \ref{fig:smu_pvalues}, we actually do not plot the densities for points $(x_1, x_2)$ for which either $x_1$ or $x_2$ is too close to zero. At such points, $\smumle$ has a tendency to overfit leading to very large values for the fitted density. This behavior has been observed previously in other shape-constrained estimation problems \cite{woodroofe1993penalized, sun1996adaptive, kulikov2006behavior, lim2012consistency, mazumder2019computational, liao2024convex, liao2024overfitting}. 

\begin{figure}[htbp]
  \centering
  \includegraphics[width=\linewidth]{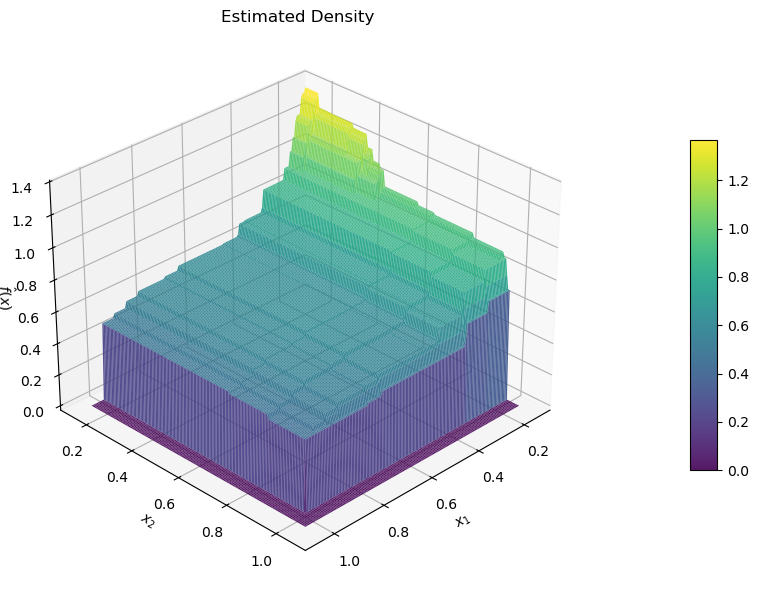}
  \caption{An approximate SMU MLE fitted to the data in Figure \ref{fig:pvalues} using Algorithm \ref{alg:smu_mle_approx}.} 
  \label{fig:smu_pvalues}
\end{figure}

\section{Proofs of main results}\label{proofs}

This section contains proofs of Theorems
\ref{hellexp}, \ref{jrec}, \ref{adaptation}, \ref{lowerbound}, 
Proposition \ref{prodsmugen}, Proposition \ref{1dimproved} and Proposition \ref{1d_vandegeer}. 
Note that Corollary \ref{conjecture} is
the special case of Theorem \ref{jrec} (for $J = 1$), Corollary
\ref{finitelq} is simply a restatement of Corollary \ref{conjecture},
Corollary \ref{conjlowbou} is a consequence of Corollary
\ref{finitelq} because $L_{\qh}$ norms for finite $\qh$ on a compact
rectangle can be bounded from above using the $L_{\infty}$ norm.
Due
to these reasons, we do not need to provide proofs for Corollaries
\ref{conjecture}, \ref{finitelq}, \ref{conjlowbou}. We also note 
that the lemmas stated in Section \ref{sec:bracken} are proved in
Section \ref{additechres}.

\subsection{Proof of Theorem \ref{hellexp}}
\color{black}
First we use convexity arguments to prove
that
\begin{equation*}
  s^2 \leq G(s) \qt{for all $0 \leq s \leq h(\hat{p}_n, p_0)$}
\end{equation*}
where $G(\cdot)$ is as defined in \eqref{gdef}. From here, the proof
is completed by use of the Bousquet concentration inequality for
suprema of empirical processes (see, for example, \cite[Theorem
12.5]{boucheron2013concentration}).   
\color{black}

\begin{proof}[Proof of Theorem \ref{hellexp}]
%\section{Proof of Theorem \ref{hellexp}} 
Because $\hat{p}_n$ is the MLE over $\Ps$, the function
\begin{equation*}
  g(\alpha) := \frac{1}{n} \sum_{i=1}^n \log \left((1 - \alpha)
    \hat{p}_n(X_i) + \alpha p(X_i) \right)
\end{equation*}
for $\alpha \in [0, 1]$ is maximized at $\alpha = 0$ for every $p \in
\Ps$. This implies 
that $g'(0+) \leq 0$ which gives 
\begin{equation*}
  \frac{1}{n} \sum_{i=1}^n \frac{p(X_i)}{\hat{p}_n(X_i)} \leq 1
  \qt{for every $p \in \Ps$}. 
\end{equation*}
The above inequality is equivalent to 
\begin{equation*}
\frac{1}{n} \sum_{i=1}^n \left(\frac{1}{2}
  \frac{p(X_i)}{\hat{p}_n(X_i)} + \frac{1}{2}
  \frac{p(X_i)}{p(X_i)}\right) \leq \frac{1}{2} + \frac{1}{2} = 1. 
\end{equation*}
Using convexity of the map $u \mapsto p(X_i)/u$, we obtain
\begin{equation}\label{kkt}
  \frac{1}{n} \sum_{i=1}^n \frac{2p(X_i)}{p(X_i) + \hat{p}_n(X_i)} \leq 1
  \qt{for every $p \in \Ps$}. 
\end{equation}
Specializing the above inequality to $p = p_0$, we get (below $P_0$ is
the probability measure having density $p_0$ and $P_n$ is the
empirical distribution) 
\begin{equation*}
  1 \geq \int \frac{2p_0}{p_0 + \hat{p}_n} dP_n = \int \frac{2p_0}{p_0 +
    \hat{p}_n} dP_0 + \int\frac{2p_0}{p_0 + \hat{p}_n} d(P_n - P_0). 
\end{equation*}
This gives
\begin{equation}\label{temp11}
  \int\frac{2p_0}{p_0 +
    \hat{p}_n} dP_0 -1\leq \int
  \frac{2p_0}{p_0 + \hat{p}_n} d(P_0 - P_n). 
\end{equation}
Also note that for any pair of densities $p$ and $q$:
\begin{align*}
  h^2(p, q) &= \int \left( \sqrt{p} - \sqrt{q} \right)^2  \\
            &= \int \frac{(p - q)^2}{(\sqrt{p} + \sqrt{q})^2} \\
  &\leq \int \frac{(p - q)^2}{p + q} = \int \frac{4 p^2 + (p+q)^2 - 
                                          4 p (p + q)}{p + q} = 2
    \left(\int \frac{2p^2}{p + q} - 1 \right). 
\end{align*}
With $p = p_0$ and $q = \hat{p}_n$, we get
\begin{align*}
h^2(\hat{p}_n, p_0) \leq  2\left(\int\frac{2p_0}{p_0 +
    \hat{p}_n} dP_0 -1 \right).
\end{align*}
Combining the above inequality with \eqref{temp11}, we get 
\begin{equation}\label{biloss}
  \hat{t}^2 \leq G(\hat{t}) \qt{where $\hat{t} := h(\hat{p}_n,
    p_0)$}. 
\end{equation}
Here the function $G(\cdot)$ is as defined in \eqref{gdef}. We now
claim that the above inequality is actually true for all $s \in [0, 
\hat{t}]$ i.e.,
\begin{equation}\label{strong.biloss}
  s^2 \leq G(s) \qt{for all $0 \leq s \leq \hat{t}$}. 
\end{equation}
To prove \eqref{strong.biloss}, assume, if possible, that $G(s) < s^2$
for some $0 < s < \hat{t}$. Suppose $\alpha_s \in (0, 1)$ is such that
\begin{equation}\label{exeq}
h(p_0, (1 - \alpha_s) p_0 + \alpha_s \hat{p}_n) = s. 
\end{equation}
Such an $\alpha_s \in (0, 1)$ exists because the function
\begin{equation*}
  \alpha \mapsto h(p_0, (1 - \alpha) p_0 + \alpha \hat{p}_n)
\end{equation*}
is continuous in $\alpha$, takes the value $0$ at $\alpha = 0$ and
$\hat{t}$ at $\alpha = 1$. We then get
\begin{equation*}
  \int \frac{4p_0}{p_0 + (1 - \alpha_s) p_0 + \alpha_s \hat{p}_n}
  d(P_0 - P_n) \leq
  G(s) < s^2 
\end{equation*}
which is equivalent to
\begin{equation*}
  h^2(p_0, (1 - \alpha_s) p_0 + \alpha_s \hat{p}_n) - s^2 + 2 -
  \int \frac{4p_0}{p_0 + (1 - \alpha_s) p_0 + \alpha_s \hat{p}_n} 
  dP_n  < 0. 
\end{equation*}
Because of \eqref{exeq}, the above is same as
\begin{equation*}
  \int \frac{2p_0}{p_0 + (1 - \alpha_s) p_0 + \alpha_s \hat{p}_n} 
  dP_n > 1. 
\end{equation*}
Using convexity of $x \mapsto 1/x$, we get
\begin{align*}
  1 &< \int \frac{2p_0}{p_0 + (1 - \alpha_s) p_0 + \alpha_s \hat{p}_n} 
s  dP_n \\ &= \int \frac{2p_0}{(1 - \alpha_s) (2p_0) + \alpha_s (p_0 + \hat{p}_n)} 
  dP_n \leq (1 - \alpha_s) + \alpha_s \int \frac{2p_0}{p_0 +
    \hat{p}_n} dP_n. 
\end{align*}
This gives
\begin{equation*}
  \int \frac{2p_0}{p_0 + \hat{p}_n} dP_n > 1
\end{equation*}
which contradicts \eqref{kkt}. This proves \eqref{strong.biloss}. 

Using \eqref{strong.biloss}, the probability on the left hand side of
\eqref{hellexp.eq} can be bounded as follows.
\begin{align*}
  \P \left\{h(p_0, \hat{p}_n) \geq t_0 + x \right\} &= \P
  \left\{\hat{t} \geq t_0 + x \right\} \\ &\leq \P \left\{G(t_0 + x) \geq
                                            (t_0 + x)^2 \right\} \\
  &\leq \P \left\{G(t_0 + x) - \E G(t_0 + x) \geq
                                            (t_0 + x)^2 - \E G(t_0 +
    x) \right\} \\
  &\leq    \P \left\{G(t_0 + x) - \E G(t_0 + x) \geq
                                            (t_0 + x)^2 - \bar{G}(t_0 +
    x) \right\}. 
\end{align*}
Because we assumed $\bar{G}(t)/t^{2 - \eta}$ is nonincreasing on $[t_0,
\infty)$ and $\bar{G}(t_0) \leq t_0^2$, we get
\begin{equation*}
  \frac{\bar{G}(t_0 + x)}{(t_0 + x)^{2-\eta}} \leq
  \frac{\bar{G}(t_0)}{t_0^{2 - \eta}} \leq t_0^{\eta}
\end{equation*}
so that 
\begin{equation}\label{bgb}
  \bar{G}(t_0 + x) \leq t_0^{\eta} (t_0 + x)^{2 - \eta}. 
\end{equation}
As a result
\begin{equation}\label{tei}
  \begin{split}
&  \P \left\{h(p_0, \hat{p}_n) \geq t_0 + x \right\} \\ &\leq \P \left\{G(t_0 + x) - \E G(t_0 + x) \geq
(t_0 + x)^{2-\eta} \left((t_0 + x)^{\eta} - t_0^{\eta}
    \right)\right\}.
  \end{split}  
\end{equation}
To bound the probability above, we use Bousquet's concentration
inequality for the supremum of an empirical process (see, for example,
\cite[Theorem 12.5]{boucheron2013concentration}) which gives 
\begin{equation}\label{bouscon}
  \P \left\{G(t) \geq \E G(t) + u \right\} \leq \exp \left(\frac{- n
      u^2}{16(\E G(t) + t^2 + \frac{u}{6})} \right) 
\end{equation}
for every $t > 0$ and $u \geq 0$. To see how \eqref{bouscon} is
obtained from Bousquet's inequality in 
the form stated in \cite[Theorem 12.5]{boucheron2013concentration},
just take the index set $\T := \left\{p \in \Ps : h(p_0, p) \le t \right\}$
and 
\begin{equation*}
  X_{i, p} := \int \frac{p_0}{p_0 + p} dP_0 - \frac{p_0(X_i)}{p_0(X_i)
  + p(X_i)}, 
\end{equation*}
so that $\sup_{p \in \mathcal{P}: h(p,p_0)\leq t}
\frac{1}{n}\sum_{i=1}^n X_{i,p} = G(t)$ and
\begin{align*}
&\sup_{s \in \T} \sum_{i=1}^n \mathrm{var}(X_{i, s})
  \\ &
  \leq n \sup_{p \in \Ps : h(p_0, p) \leq t}  \int
    \left(\frac{p_0}{p_0 + p} - \frac{1}{2} \right)^2 p_0 \\
  &=\frac{n}{4}\sup_{p \in \Ps : h(p_0, p) \leq t} \int
    \left(\frac{p-p_0}{p+p_0} \right)^2 p_0 \\ &\leq 
    \frac{n}{4} \sup_{p \in \Ps : h(p_0, p) \leq t} \int \frac{(p - p_0)^2}{p + p_0} \leq \frac{n}{2}
 \sup_{p \in \Ps : h(p_0, p) \leq t}   h^2(p_0, p) \leq \frac{n t^2}{2}. 
\end{align*} 
Applying \eqref{bouscon} to $t = t_0 + x$ and $u =  
(t_0 + x)^{2-\eta} \left((t_0 + x)^{\eta} - t_0^{\eta} \right)$, we
get 
(via \eqref{tei})
\begin{align*}
&  \P \left\{h(p_0, \hat{p}_n) \geq t_0 + x \right\} \\ &\leq \exp
  \left(\frac{-n (t_0 + x)^{4-2\eta} \left((t_0 + x)^{\eta} - t_0^{\eta} \right)^2}{16 \left( \E G(t_0 + x) + {(t_0
  + x)^2} + \frac{(t_0 + x)^{2-\eta} \left((t_0 + x)^{\eta} - t_0^{\eta} \right)}{6}\right)} \right).
\end{align*}
Using $\E G(t_0 + x) \leq \bar{G}(t_0 + x)$ and the bound \eqref{bgb}
on $\bar{G}(t_0 + x)$, we obtain
\begin{align*}
&  \P \left\{h(p_0, \hat{p}_n) \geq t_0 + x \right\} \\ &\leq \exp
  \left(\frac{-n (t_0 + x)^{4-2\eta} \left((t_0 + x)^{\eta} -
        t_0^{\eta} \right)^2}{16 \left(t_0^{\eta} (t_0 + x)^{2 - \eta} + {(t_0
  + x)^2} + \frac{(t_0 + x)^{2-\eta} \left((t_0 + x)^{\eta} -
    t_0^{\eta} \right)}{6}\right)} \right). 
\end{align*}
Because
\begin{align*}
&  t_0^{\eta} (t_0 + x)^{2 - \eta} + {(t_0
  + x)^2} + \frac{(t_0 + x)^{2-\eta} \left((t_0 + x)^{\eta} -
    t_0^{\eta} \right)}{6} \\ &= \frac{5}{6} t_0^{\eta} (t_0 + x)^{2 -
  \eta} + \frac{7}{6} (t_0 + x)^2 \leq 2(t_0 + x)^2, 
\end{align*}
we get
\begin{equation*}
  \P \left\{h(p_0, \hat{p}_n) \geq t_0 + x \right\} \leq \exp
  \left(\frac{-n (t_0 + x)^{2-2\eta} \left((t_0 + x)^{\eta} -
        t_0^{\eta} \right)^2}{\helconst} \right). 
\end{equation*}
We now use the elementary inequality (the first equality below holds
for some $\tilde x \in [0, x]$ by the mean value theorem): 
\begin{equation*}
  (t_0 + x)^{\eta} - t_0^{\eta} = \frac{\eta x}{(t_0 + \tilde{x})^{1- \eta}}
  \geq \frac{\eta x}{(t_0 + x)^{1 - \eta}}
\end{equation*}
which proves \eqref{hellexp.eq}. To prove
\eqref{hellexp.exp}, just mulitply both sides of \eqref{hellexp.eq} by
$x$ and integrate from $x = 0$ to $x = \infty$ to get
\begin{align*}
  \E \left(h(p_0, \hat{p}_n) - t_0 \right)_+^2 \leq \frac{16}{n
  \eta^2} 
\end{align*}
and then use $a^2 \leq 2 (a - b)_+^2 + 2 b^2$ for $a, b \geq 0$. This
completes the proof of Theorem \ref{hellexp}. 
\end{proof}

\color{black}

\subsection{Initial steps in appplying Theorem \ref{hellexp} with $\Ps = \smud$} \label{init_steps}
Theorem \ref{hellexp} (with $\Ps = \smud$) is our starting point 
for proving the Hellinger accuracy results for $\smumle$.  The next
step is to prove upper bounds for   
\begin{align}
      \E G(t) &= \E \sup_{p \in \smud : h(p_0, p) \leq t} \int 
                \frac{4p_0}{p_0 + p} d(P_0 - P_n) \nonumber \\ &=\E \sup_{p \in
                                                       \smud : h(p_0,
                                                       p) \leq t} \int \left[ 
                \frac{4p_0}{p_0 + p} - 2 \right]d(P_0 - P_n) \nonumber  \\& =
                                                       2 \E \sup_{p \in \smud : h(p_0,
      p) \leq t} \int  \frac{p_0 - p}{p_0 + p} d(P_0 - P_n).
  \label{altexpEGt}
  \end{align}
For this, we decompose the support of $P_0$ into a finite
collection of rectangles $R_1, \dots, R_J$ whose pairwise
intersections have zero volume, and then use the bound:
\begin{align}
  \E G(t) &= 2 \E \sup_{p \in \smud : h(p_0,
            p) \leq t} \sum_{i=1}^J \int \frac{p_0 - p}{p_0 + p} \one(R_i)
            d(P_0 - P_n) \nonumber \\ &\leq 2 \sum_{i=1}^J \E \sup_{p \in \smud : h(p_0,
            p) \leq t} \int \frac{p_0 - p}{p_0 + p} \one(R_i)
            d(P_0 - P_n). \label{eledeco}
\end{align}
Here $\one(R)$ denotes the indicator function for the set $R$. The
$i^{th}$ term in the above sum is 
\begin{align}\label{secupbo}
H(t, R_i) :=  \E \sup_{p \in \smud : h(p_0,
            p) \leq t} \int \frac{p_0 - p}{p_0 + p} \one(R_i)
            d(P_0 - P_n)
\end{align}
and we employ two upper bounds for the above quantity. The first upper
bound is the trivial one  
obtained by replacing $\frac{p_0 - p}{p_0 + p}$ by 1:
\begin{align}\label{trivialb}
H(t, R_i) \leq   \E \sup_{p \in \smud : h(p_0,
            p) \leq t} \int_{R_i} d(P_0 + P_n) = 2 P_0(R_i), 
\end{align}
and this bound will be useful when $P_0(R_i)$ is small. The second upper
bound on \eqref{secupbo} is obtained from the use of Theorem \ref{expcontrol} with $\F$ defined in \eqref{Fdest}. 

This bound involves bracketing entropy numbers of $\F$ under
the $L_2(P_0)$ metric. Results on these bracketing entropy numbers
are provided in Section \ref{sec:bracken}.
\color{black}

\subsection{Proofs of Theorem \ref{jrec} and Theorem
  \ref{adaptation}}
The proofs of Theorem \ref{jrec} and Theorem \ref{adaptation}
will both be based on the following result which provides an upper
bound on an expected supremum. 

\begin{lemma}\label{ESupResult}
  Consider the rectangle $R := [a_1, b_1] \times \dots \times [a_d,
  b_d]$ with $0 \leq a_j < b_j$ for each $j = 1, \dots,
  d$. For $t > 0$, let
\begin{equation*}
H(t, R) := \E \sup_{p \in \smud : h(p_0, p) \leq t} \int
    \frac{p_0 - p}{p_0 + p} \one(R) d(P_0 - P_n). 
  \end{equation*}
Then, for every $\qh \in (1, \infty]$, the quantity $H(t, R)$ is
bounded from above by: 
  \begin{equation}\label{alphabetaclaim}
    \begin{split}
&C_{d, \qh} \sqrt{\frac{t}{n}} \sqrt{\beta - \alpha}
      |R|^{\frac{1}{4\ph}} \|p_0^{-1}\|^{1/4}_{L_{\qh}(R)} \left[\log
    \left(e + \frac{2e (\beta - \alpha) |R|^{\frac{1}{2\ph}}
               \|p_0^{-1}\|_{L_{\qh}(R)}^{1/2}}{t \sqrt{2}} 
               \right)\right]^{d-1} \\ &+ 
  \frac{C_{d, \qh}}{nt} (\beta - \alpha) |R|^{\frac{1}{2\ph}}
                                         \|p_0^{-1}\|_{L_{\qh}(R)}^{1/2}
                                         \left[\log \left(e + \frac{2
                                         e (\beta 
        - \alpha) |R|^{\frac{1}{2\ph}} \|p_0^{-1}\|_{L_{\qh}(R)}^{1/2}}{t
                                         \sqrt{2}} 
    \right) \right]^{2(d-1)} 
    \end{split}
  \end{equation}
  where $\alpha$ and $\beta$ are given by
 \begin{equation*}
\alpha = L(R) := \inf_{\substack{p \in \smud \\ h(p, p_0) \leq t}}
   \inf_{x \in R} p(x) ~~~ \text{ and } ~~~ \beta = U(R) := \sup_{\substack{p \in \smud \\ h(p, p_0) \leq t}}
   \sup_{x \in R} p(x), 
 \end{equation*}  
Also, in \eqref{alphabetaclaim}, $C_{d, \qh}$ is a constant that depends on
$d$ and $\qh$ alone, and $\ph$ is such that $1/\ph + 1/\qh = 1$. 
\end{lemma}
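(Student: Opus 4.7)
The plan is to apply Theorem \ref{expcontrol} (Ossiander's maximal inequality) to the function class
\begin{equation*}
  \F := \left\{\frac{p_0 - p}{p_0 + p} \one(R) : p \in \smud,\; h(p_0, p) \leq t \right\}.
\end{equation*}
Every $f \in \F$ is bounded in absolute value by $1$, so one may take $M = 1$. Moreover, a computation identical to the variance estimate used at the end of the proof of Theorem \ref{hellexp} gives
\begin{equation*}
  \E f^2(X_1) \leq \int \frac{(p - p_0)^2}{p + p_0} \leq 2 h^2(p, p_0) \leq 2 t^2,
\end{equation*}
so the radius parameter in Theorem \ref{expcontrol} can be taken to be $\delta = t\sqrt{2}$.

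Next I would control the $L_2(P_0)$ bracketing entropy of $\F$. By the very definition of $L(R) = \alpha$ and $U(R) = \beta$, every $p$ with $h(p, p_0) \leq t$ satisfies $\alpha \leq p(x) \leq \beta$ on $R$, whence $\{p\one(R) : p \in \smud,\; h(p, p_0) \leq t\} \subseteq \F(R, \alpha, \beta)$ in the notation of \eqref{falbe}. Combining Lemma \ref{trivial} at the exponent $\qh$ (to pass from the $L_2(P_0)$ bracketing entropy of $\F$ to the $L_{2\ph}(R)$ bracketing entropy of the restricted SMU class, at the rescaled level $\epsilon/(2\sqrt{\|p_0^{-1}\|_{L_\qh(R)}})$) with Lemma \ref{brackd} at $r = 2\ph$ produces a bound of the form
\begin{equation*}
  \log N_{[]}(\epsilon, \F, L_2(P_0)) \leq \frac{C_{d, \qh}\, A}{\epsilon} \left(\log \frac{2A}{\epsilon}\right)^{2(d-1)} \one(\epsilon \leq 2A),
\end{equation*}
where $A := (\beta - \alpha)\, |R|^{1/(2\ph)}\, \|p_0^{-1}\|_{L_\qh(R)}^{1/2}$.

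I would then estimate the bracketing integral $J(\delta) = \int_0^\delta \sqrt{\log N_{[]}(\epsilon, \F, L_2(P_0))}\, d\epsilon$. The substitution $u = \log(2A/\epsilon)$ reduces the integral to the incomplete gamma-type quantity $\sqrt{2A}\int_{\log(2A/\delta)}^\infty u^{d-1} e^{-u/2}\, du$, which together with the additive $e$ trick to cover the regime $A \lesssim \delta$ (where the entropy indicator forces $J$ to be small and the log is close to $1$) yields, uniformly in $\delta > 0$,
\begin{equation*}
  J(\delta) \leq C_{d, \qh}\, \sqrt{A \delta}\, \left[\log\!\Big(e + \frac{2A}{\delta}\Big)\right]^{d-1}.
\end{equation*}
Substituting $\delta = t\sqrt{2}$ and $M = 1$ into Theorem \ref{expcontrol} then produces the two summands in \eqref{alphabetaclaim}: the leading $J(\delta)/\sqrt{n}$ term gives $\sqrt{t/n}\,\sqrt{\beta-\alpha}\,|R|^{1/(4\ph)}\,\|p_0^{-1}\|_{L_\qh(R)}^{1/4}$ times a $(\log)^{d-1}$ factor, and the correction term $J(\delta)^2/(\delta^2 n)$ collapses to $\frac{1}{nt}(\beta-\alpha)\,|R|^{1/(2\ph)}\,\|p_0^{-1}\|_{L_\qh(R)}^{1/2}$ times a $(\log)^{2(d-1)}$ factor, which is exactly the structure of \eqref{alphabetaclaim}.

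The main technical point requiring care is tracking the logarithmic factors through the bracketing integral: because $\sqrt{\log N_{[]}(\epsilon, \F, L_2(P_0))} \leq \sqrt{A/\epsilon}\,\log^{d-1}(2A/\epsilon)$ rather than $\log^{2(d-1)}(2A/\epsilon)$, the exponent in the first summand of \eqref{alphabetaclaim} is exactly $d-1$, whereas the Bernstein-type second summand retains the full $2(d-1)$ from the entropy bound itself. Once the bracketing entropy bound of the previous paragraph is in hand and the integral is evaluated with this precision, everything else, including keeping the $\qh$-dependence of the constant in one place and handling the $A \lesssim \delta$ degenerate regime via the $\log(e + \cdot)$ convention, is routine bookkeeping.
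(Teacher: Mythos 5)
Your proposal follows the paper's own proof essentially step for step: apply Theorem \ref{expcontrol} to $\F$ with $M=1$ and $\delta = t\sqrt{2}$, pass through Lemma \ref{trivial} to $L_{2\ph}(R)$ bracketing of the restricted SMU class, apply Lemma \ref{brackd} at $r=2\ph$, and evaluate the resulting entropy integral (the paper delegates this last step to Lemma \ref{simpleintegral}, whereas you do the substitution $u = \log(2A/\epsilon)$ inline, but the bound obtained is the same). No gaps; this is the same argument with the same constants appearing in the same places.
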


\begin{proof}[Proof of Lemma \ref{ESupResult}]
We write 
  \begin{equation*}
H(t, R)  = \E \sup_{f \in \F}
 \left(P_0 f - P_n f \right) 
  \end{equation*}
  where
  \begin{equation*}
\F := \left\{\frac{p_0 - p}{p_0 +
          p} \one(R) : p \in \smud, h(p_0, p) \leq t \right\},    
  \end{equation*}
and apply Theorem \ref{expcontrol} to bound the right hand side
  above. The quantity $\delta$ appearing in Theorem
  \ref{expcontrol} can be taken to be equal to $t \sqrt{2}$ because
  for every $p \in \mathfrak{P}(\alpha, \beta)$ and $f := \frac{p_0 - p}{p_0 + p}
  \one(R)$, we have (below $X_1 \sim P_0$),
  \begin{align*}
    \E f^2(X_1) &= \int_R \frac{(p_0 - p)^2}{(p_0 + p)^2}p_0 \\ & \leq
    \int \frac{(p_0 - p)^2}{(p_0 + p)^2}p_0 \\ & = \int \left(\sqrt{p_0} -
    \sqrt{p} \right)^2 \frac{(\sqrt{p_0} + \sqrt{p})^2 p_0}{(p_0 +
    p)^2} \\ &\leq \int \left(\sqrt{p_0} -
    \sqrt{p} \right)^2 \frac{2(p_0 + p) p_0}{(p_0 +
    p)^2} \leq 2 h^2(p_0, p) \leq 2 t^2. 
  \end{align*}
  The quantity $M$ appearing in Theorem \ref{expcontrol} can be taken
  to be one because $\frac{p_0 - p}{p_0 + p} \leq 1$. Theorem
  \ref{expcontrol} then implies
  \begin{align}
\E \sup_{p \in \smud: h(p_0, p) \leq t} &\int
    \frac{p_0 - p}{p_0 + p} \one(R) d(P_0 - P_n) \nonumber \\ 
    &\leq
    \frac{C}{\sqrt{n}} J(t \sqrt{2}) + \frac{C}{n t^2} J^2(t \sqrt{2}) \label{temp.jb}
  \end{align}
  where $J(\delta)$ is defined in Equation \eqref{braentint}.
%  \begin{align}\label{jdel}
%    J(\delta) := \int_0^{\delta} \sqrt{\log N_{[]}(\epsilon, \F,
%    L_2(P_0))} d\epsilon. 
%  \end{align}
To bound $J(\delta)$, we first use inequality \eqref{brack.lqbound} in
Lemma \ref{trivial} to get 
\begin{align*}
&  \log N_{[]}(\epsilon, \F, L_2(P_0)) \\ &\leq \log
  N_{[]}\left(\frac{\epsilon}{2\|p_0^{-1}\|_{L_{\qh}(R)}^{1/2}},
  \left\{p \one(R) : p \in \smud, h(p_0, p) \leq t \right\},
  L_{2\ph}(R) \right), 
\end{align*}
followed by Lemma \ref{brackd} to obtain
 \begin{align*}
   &\log N_{[]}(\epsilon, \F, L_2(P_0)) \\
   &\leq C_{d, \qh} \frac{\left(\beta - \alpha \right)
     |R|^{1/(2 \ph)} \|p_0^{-1}\|^{1/2}_{L_{\qh}(R)}}{\epsilon}
     \left(\log \frac{2\left(\beta - \alpha \right)
     |R|^{1/(2 \ph)} \|p_0^{-1}\|^{1/2}_{L_{\qh}(R)}}{\epsilon} \right)^{2(d-1)} 
 \end{align*}
 provided $\epsilon \leq 2 \left(\beta - \alpha \right)
 |R|^{1/(2 \ph)} \|p_0^{-1}\|^{1/2}_{L_{\qh}(R)}$.

 Plugging this bound in \eqref{braentint} and then applying Lemma
\ref{simpleintegral} leads to the following upper bound for
$J(\delta)$: 
\begin{equation*}
 C_{d, \qh} \sqrt{\delta} \sqrt{\beta - \alpha}
  |R|^{\frac{1}{4\ph}} \|p_0^{-1}\|_{L_{\qh}(R)}^{1/4} \left[\log \left(e +
      \frac{2 e (\beta - 
      \alpha) |R|^{\frac{1}{2\ph}} \|p_0^{-1}\|_{L_{\qh}(R)}^{1/2} }{\delta}
  \right) \right]^{d-1}.  
\end{equation*}
Combining this bound on $J(\delta)$ with \eqref{temp.jb}
leads to \eqref{alphabetaclaim} which completes the proof of Lemma
\ref{ESupResult}. 
\end{proof}

\subsubsection{Proof of Theorem \ref{jrec}}

The proof of Theorem \ref{jrec} will be based on the following
result.

\begin{proposition}\label{mainhrbound}
  Fix $n \geq 2$ and $\qh \in (1, \infty]$ with $\ph$ such that
  $1/(\ph) + 1/(\qh) = 1$. Suppose $R \subseteq [0, \infty)^d$ is the
  rectangle 
  given by $R = 
  [a_1, b_1] \times \dots \times [a_d, b_d]$. Then $H(t,R)$ (for the definition, see \eqref{secupbo})
%  \begin{align*}
%    H(t, R) := \E  \sup_{p \in \smud: h(p, p_0) \leq t} \int 
%    \frac{p_0 - p}{p_0 + p} \one(R) d(P_0 - P_n) 
%  \end{align*}
  satisfies the following bound for every $t \geq n^{-1/3}$: 
\begin{align*}
  H(t, R) &\leq C_{d, \qh} (\log n)^{2d - 1} \sqrt{\frac{t}{n}} \left(1 +
            n^{1/6}\sqrt{t} \right) W \max \left((\log W)^{d-1}, 1
            \right) \\
  &+ \frac{C_{d, \qh}}{nt} (\log n)^{3d-2} \left(1 + n^{1/6} \sqrt{t}
    \right)^2 W^2 \max\left((\log W)^{2d-2}, 1
            \right) \\ & + \frac{2d}{n}W^4 + 2(\log n)^d
                         \frac{t}{n^{1/3}} W
\end{align*}
where $W = W(R, p_0, \qh)$. 
\end{proposition}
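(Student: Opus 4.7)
The proof will apply Lemma~\ref{ESupResult} not to all of $R$ at once, but to each cell of a carefully chosen dyadic partition of $R$, then sum the cell contributions using the analogue of \eqref{eledeco}. A single application to all of $R$ is insufficient because the envelope $U(R):=\sup_{p\in\smud,\,h(p,p_0)\leq t}\sup_{x\in R}p(x)$ from Lemma~\ref{bds} can be arbitrarily large: near the ``lower corner'' $\mathbf a=(a_1,\dots,a_d)$ of $R$ (where $p_0$ attains its maximum) the pointwise envelope $U_{p_0}(x,t)$ blows up like $t^2/\prod_i(x_i-a_i)$. My plan is therefore to dyadically decompose $R$ into $O((\log n)^d)$ sub-rectangles, apply Lemma~\ref{ESupResult} to cells that are far from the corner, and apply the trivial bound $H(t,R_{\kvec})\leq 2P_0(R_{\kvec})$ from \eqref{trivialb} to cells near the corner.

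\textbf{Partition and envelope estimates.}
After translating, assume $R=[0,b_1]\times\cdots\times[0,b_d]$. Choose $K\asymp \log n$ and introduce the dyadic cuts $\{0,\,b_i 2^{-K},\,b_i 2^{-K+1},\dots,b_i\}$ in each coordinate $i$, which yields a partition of $R$ into $(K+1)^d$ cells $\{R_{\kvec}\}$. For a cell $R_{\kvec}$ whose minimum coordinate $\underline c_i(\kvec)$ is strictly positive in every direction, Lemma~\ref{bds} with $\alpha=\mathbf 0\in R$ gives
\[
U(R_{\kvec})\;\leq\;2\,p_0(\mathbf 0)\,+\,\frac{2t^2}{\prod_i \underline c_i(\kvec)}.
\]
I will call such a cell \emph{regular} if the second summand above is bounded by $p_0(\mathbf 0)=\max_{x\in R}p_0(x)$, and \emph{singular} otherwise; the cells touching a face $\{x_i=0\}$ are also declared singular.

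\textbf{Combining the two regimes.}
On a regular cell, $\beta-\alpha\leq U(R_{\kvec})\lesssim \max_R p_0$; substituting this into \eqref{alphabetaclaim} together with the monotone inequalities $|R_{\kvec}|\leq |R|$, $\|p_0^{-1}\|_{L_{\qh}(R_{\kvec})}\leq \|p_0^{-1}\|_{L_{\qh}(R)}$ and $\max_{R_{\kvec}}p_0\leq \max_R p_0$ makes the two summands in \eqref{alphabetaclaim} carry leading factors $W$ and $W^2$ respectively, with inner logarithms of size $\log(e+W^2/t)$ that are controlled by $O(\log n+\log W)$ using $t\geq n^{-1/3}$. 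Summing over $O((\log n)^d)$ regular cells produces Terms~1 and~2 of the claimed bound: the extra $(\log n)^d$ raises the log exponents from $d-1$ and $2(d-1)$ to $2d-1$ and $3d-2$, while the factor $(1+n^{1/6}\sqrt t)$ absorbs the residual mismatch between $\log(e+W^2/t)$ and $\log n$. For each singular cell the trivial bound \eqref{trivialb} gives $H(t,R_{\kvec})\leq 2\max_R p_0\cdot|R_{\kvec}|$; the singular cells fill a thin shell around $\mathbf a$ whose aggregate $P_0$-mass is $O((\log n)^d\,t\,n^{-1/3}\cdot W)$, yielding Term~4. Term~3, namely $(2d/n)W^4$, is a lower-order correction coming from the $J(\delta)^2/(nt^2)$ piece of \eqref{temp.jb} applied to the $d$ boundary slabs $\{x_i<b_i 2^{-K}\}$ (one per coordinate).

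\textbf{Main obstacle.}
The principal technical hurdle is the quantitative control of the aggregate $P_0$-mass of the singular cells: the regular/singular threshold, the dyadic depth $K\asymp\log n$, and the hypothesis $t\geq n^{-1/3}$ must be tuned so that this mass is at most $(\log n)^d\,t\,n^{-1/3}\cdot W$, no worse. A closely related bookkeeping difficulty is verifying that, after uniformly replacing $|R_{\kvec}|$, $\|p_0^{-1}\|_{L_{\qh}(R_{\kvec})}$ and $\max_{R_{\kvec}}p_0$ by their global $R$-counterparts in each cell, the log exponents come out exactly as claimed without introducing spurious powers of $\log W$ in the singular contribution. Handling the boundary slabs $\{x_i=0\}$, where the choice $\alpha=\mathbf 0$ in Lemma~\ref{bds} degenerates and one must revert to the second, quadratic summand of \eqref{temp.jb}, is the other delicate point; I expect these two items to absorb the bulk of the work.
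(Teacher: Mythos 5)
Your overall architecture --- dyadically decompose $R$ around the corner $\mathbf a$ into $O((\log n)^d)$ cells, bound the ``interior'' cells via Lemma~\ref{ESupResult} using the envelope from Lemma~\ref{bds}, and use the trivial bound $H(t,R_{\kvec})\le 2P_0(R_{\kvec})$ near the corner --- is the same as the paper's. The partition is also essentially identical ($K\asymp\log_2 n$ dyadic levels, matching the paper's $u_i=2^{i-1}/n$). The gap is in your choice of the regular/singular threshold, and it is fatal to the mechanism of Theorem~\ref{hellexp}.

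You declare a cell \emph{regular} precisely when $2t^2/\prod_i\underline c_i(\kvec)\le p_0(\mathbf 0)$, i.e.\ when the envelope $U(R_{\kvec})$ from Lemma~\ref{bds} stays of order $p_0(\mathbf 0)$. This is natural but too aggressive. Track the resulting singular-cell mass: a non-face cell indexed by $(j_1,\dots,j_d)$ (with $\underline c_i\asymp b_i2^{-j_i}$ and volume $\asymp|R|2^{-\sum j_i}$) is singular iff $\sum_i j_i>S:=\log_2\bigl(c_d\,p_0(\mathbf 0)|R|/t^2\bigr)$, and the number of index vectors in shell $\{\sum j_i=s\}$ is $\asymp s^{d-1}$, so the aggregate $P_0$-mass of non-face singular cells is of order
\[
p_0(\mathbf 0)\sum_{s>S}s^{d-1}|R|2^{-s}\;\asymp\;S^{d-1}\,t^2\;\asymp\;\bigl(\log n+\log W\bigr)^{d-1}\,t^2 .
\]
This is $\Theta\bigl((\log n)^{d-1}t^2\bigr)$, not $O\bigl((\log n)^d\,t\,n^{-1/3}W\bigr)$ as you assert. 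The difference is not cosmetic: when Proposition~\ref{mainhrbound} is fed into Theorem~\ref{hellexp} via Theorem~\ref{jrec}, one needs $\E G(t)\le t^2$ for all $t\ge t_0$, and an additive term of size $(\log n)^{d-1}t^2$ in $H(t,R)$ makes $\E G(t)\ge c(\log n)^{d-1}t^2 > t^2$ for all $t$, so no admissible $t_0$ exists. You correctly flag ``the principal technical hurdle is the quantitative control of the aggregate $P_0$-mass of the singular cells,'' but with the threshold you chose that hurdle cannot be cleared.

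The paper's fix is to use a smaller threshold. A cell is singular there iff $u_{i_1}\cdots u_{i_d}\le\eta$ where $\eta=\frac{t\,\Upsilon^{1/3}}{n^{1/3}\,p_0(\mathbf a)^{2/3}|R|}$; note the $n^{-1/3}$, not $t^2$. When $t\gg n^{-1/3}$, this declares \emph{fewer} cells singular than your rule does. In exchange, the envelope on the regular cells is no longer $O(p_0(\mathbf a))$ but only $\bigl(\sqrt{p_0(\mathbf a)}+t/\sqrt{\eta|R|}\bigr)^2$, which is where the extra $(1+n^{1/6}\sqrt t)$ factors in Terms~1--2 of the claimed bound come from --- they are \emph{not}, as you suggest, a fudge factor to absorb a $\log$-versus-$\log n$ mismatch, but the genuine price for controlling the envelope only up to $\eta$. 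The singular contribution becomes $D(t,\eta)\le 2(\log n)^d\,p_0(\mathbf a)|R|\,\eta\le 2(\log n)^d\,t\,n^{-1/3}W$, linear in $t$, which \emph{can} be dominated by $t^2/7$ for $t$ of order $n^{-1/3}$ times logs. Choosing $\eta$ is therefore an optimization balancing the envelope growth against the singular mass; fixing the envelope first (your rule) forfeits that balance.

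One smaller inaccuracy: you attribute the $(2d/n)W^4$ term to the $J(\delta)^2/(nt^2)$ piece of \eqref{temp.jb} applied to the face slabs $\{x_i<b_i2^{-K}\}$. In fact Lemma~\ref{ESupResult} cannot be applied to those slabs at all, since the envelope from Lemma~\ref{bds} is $+\infty$ there (one may take $\alpha_i=a_i$, making $x_i-\alpha_i=0$). That term comes from the trivial bound $H(t,R_{\kvec})\le 2P_0(R_{\kvec})$ summed over the face slabs: their union has volume $\le d\,|R|/n$, giving $P_0$-mass $\le 2d\,p_0(\mathbf a)|R|/n\le 2d\,W^4/n$.
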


\begin{proof}[Proof of Proposition \ref{mainhrbound}]
Fix $u = 1/n$ and let $I := \log_2(1/u) = \log_2
n$. Let $u_0 = 0$ and $u_i = 2^{i-1} u$ for $i = 1, \dots, I+1$. Note
then that $u_{I+1} = 1$. Consider the rectangles
\begin{equation*}
  R_{i_1, \dots, i_d} := \prod_{j=1}^d \left[a_j + u_{i_j} (b_j - a_j), a_j
  + u_{i_j+1} (b_j - a_j) \right]
\end{equation*}
for $0 \leq i_j \leq I$ and $j = 1, \dots, d$. All together, there are
$(I+1)^d$ rectangles $R_{i_1, \dots, i_d}$ as each $i_j$ ranges over
$0, 1, \dots, I$ for $j = 1, \dots, d$. Also all these rectangles have
disjoint interiors. We therefore have
  \begin{align*}
H(t, R) &:= \E  \sup_{p \in \smud: h(p, p_0) \leq t} \int 
    \frac{p_0 - p}{p_0 + p} \one(R) d(P_0 - P_n) \\ &\leq \sum_{i_1, \dots,
                                              i_d \in \{0, 1, \dots,
    I\}} H_{i_1, \dots, i_d}(t)
  \end{align*}
  where
\begin{align*}  
H_{i_1, \dots, i_d}(t) :=  \E \sup_{p \in \smud: h(p, p_0) \leq t}
  \int 
    \frac{p_0 - p}{p_0 + p} \one(R_{i_1, \dots, i_d}) d(P_0 - P_n) 
\end{align*}
We now apply Lemma \ref{ESupResult} to bound the above. Note that by
Lemma \ref{bds},  
\begin{align*}
\beta &=  U(R_{i_1, \dots, i_d}) = \sup_{p \in \smud: h(p, p_0) \leq t}
                           \sup_{x \in R_{i_1, \dots i_d}} p(x) \\
  &\leq  \sup_{p \in \smud: h(p, p_0) \leq t} p \left(a_1 + u_{i_1}(b_1 -
    a_1), \dots, a_d + u_{i_d}(b_d -
    a_d) \right) \\
  &\leq \left(\sqrt{p_0(a_1, \dots, a_d)} +
    \frac{t}{\sqrt{\prod_{j=1}^d u_{i_j} (b_j - a_j)}} \right)^2 \\ &=
    \left(\sqrt{p_0(\mathbf{a})} + \frac{t}{\sqrt{|R|} \sqrt{u_{i_1}
    \dots u_{i_d}}} \right)^2
\end{align*}
where $\mathbf{a} := (a_1, \dots, a_d)$ and $|R| = (b_1 - a_1) \dots
(b_d - a_d)$. Observe that $u_{i_j}$ can equal 0 (when $i_j = 0$) in
which case the right hand side above will equal $+\infty$. Applying
\eqref{alphabetaclaim} with $\beta$ replaced by the
right hand side above, $\alpha = 0$, we obtain the
following bound in which we use the notation
\begin{align*}
  \Upsilon_{\mathbf{i}} := |R_{i_1, \dots, i_d}|^{1/(2 \mathfrak{p})}
  \|p_0^{-1} \|^{1/2}_{L_{\mathfrak{q}}(R_{i_1, \dots, i_d})}. 
\end{align*}
Our upper bound on $H_{i_1, \dots, i_d}(t)$ is given by
\begin{equation}\label{firbo}
  \begin{split}
&C_{d, \qh} \sqrt{\frac{t \Upsilon_{\mathbf{i}}}{n}} \left(\sqrt{p_0(\mathbf{a})} +
    \frac{t}{\sqrt{|R|} \sqrt{u_{i_1} \dots u_{i_d}}} \right)\times \\
& \left[\log \left(e + 
    \frac{2e\Upsilon_{\mathbf{i}}}{t \sqrt{2}}
    \left(\sqrt{p_0(\mathbf{a})} + \frac{t}{\sqrt{|R|} \sqrt{u_{i_1}
    \dots u_{i_d}}} \right)^2 \right)
    \right]^{d-1} \\
  &+ \frac{C_{d, \qh} \Upsilon_{\mathbf{i}}}{nt} \left(\sqrt{p_0(\mathbf{a})} + \frac{t}{\sqrt{|R|} \sqrt{u_{i_1}
    \dots u_{i_d}}} \right)^2 \times \\
    &\left[\log \left(e + 
    \frac{2e\Upsilon_{\mathbf{i}}}{t \sqrt{2}}
    \left(\sqrt{p_0(\mathbf{a})} + \frac{t}{\sqrt{|R|} \sqrt{u_{i_1}
    \dots u_{i_d}}} \right)^2 \right)
    \right]^{2(d-1)}
 \end{split}
\end{equation}
We can trivially bound $\Upsilon_{\mathbf{i}}$ by
\begin{align}\label{upsilon}
  \Upsilon_{\mathbf{i}} = |R_{i_1, \dots, i_d}|^{1/(2 \mathfrak{p})}
  \|p_0^{-1} \|^{1/2}_{L_{\mathfrak{q}}(R_{i_1, \dots, i_d})} \leq
  |R|^{1/(2 \mathfrak{p})} 
  \|p_0^{-1} \|^{1/2}_{L_{\mathfrak{q}}(R)} =: \Upsilon. 
\end{align}
which leads to the similar bound to \eqref{firbo} where $\Upsilon_{\mathbf{i}}$ is replaced by $\Upsilon$.
%\begin{equation}\label{firbo}
%  \begin{split}
%  & H_{i_1, \dots, i_d}(t) \\
%  &\leq C_{d, \qh} \sqrt{\frac{t \Upsilon}{n}} \left(\sqrt{p_0(\mathbf{a})} +
%    \frac{t}{\sqrt{|R|} \sqrt{u_{i_1} \dots u_{i_d}}} \right)
%\left[\log \left(e + 
%    \frac{2e\Upsilon}{t \sqrt{2}}
%    \left(\sqrt{p_0(\mathbf{a})} + \frac{t}{\sqrt{|R|} \sqrt{u_{i_1}
%    \dots u_{i_d}}} \right)^2 \right)
%    \right]^{d-1} \\
%  &+ \frac{C_{d, \qh} \Upsilon}{nt} \left(\sqrt{p_0(\mathbf{a})} + \frac{t}{\sqrt{|R|} \sqrt{u_{i_1}
%    \dots u_{i_d}}} \right)^2 \left[\log \left(e + 
%    \frac{2e\Upsilon}{t \sqrt{2}}
%    \left(\sqrt{p_0(\mathbf{a})} + \frac{t}{\sqrt{|R|} \sqrt{u_{i_1}
%    \dots u_{i_d}}} \right)^2 \right)
%    \right]^{2(d-1)}
% \end{split}
%\end{equation}
Observe that when one of the $i_j$'s equals zero, the bound above
becomes infinite (because $u_{0} = 0$). For such cases, we use the
following simpler upper bound (see \eqref{trivialb}): 
\begin{equation}\label{trib}
  \begin{split}
  H_{i_1, \dots, i_d}(t) &\leq 2 P_0(R_{i_1, \dots, i_d}) \\ &\leq 2
  p_0(\mathbf{a}) |R_{i_1, \dots, i_d}| = 2 p_0(\mathbf{a}) |R|
  (u_{i_1+1} - u_{i_1}) \dots (u_{i_d + 1} 
    - u_{i_d}).
  \end{split}  
\end{equation}
Now we fix $\eta \in (0, 1)$ and write
\begin{align*}
  H(t, R) &\leq \sum_{i_1, \dots, i_d \in \{0, 1, \dots, I\}}
         H_{i_1,\dots, i_d}(t) = A(t, \eta) + B(t, \eta)
\end{align*}
where
\begin{align*}
  A(t, \eta) &:= \sum_{i_1, \dots, i_d: u_{i_1} \dots u_{i_d} > \eta}
         H_{i_1,\dots, i_d}(t)\\
         B(t, \eta) &:=
  \sum_{i_1, \dots, i_d: u_{i_1} \dots u_{i_d} \leq \eta} 
         H_{i_1,\dots, i_d}(t).  
\end{align*}
For the terms in $A(t, \eta)$, we shall use the upper bound \eqref{firbo} with $\Upsilon$ to get
\begin{align*}
&  A(t, \eta) \\ &\leq C_{d, \qh} (I+1)^d \sqrt{\frac{t \Upsilon}{n}} \left(\sqrt{p_0(\mathbf{a})} +
    \frac{t}{\sqrt{\eta |R|}} \right) \times \\
&\left[\log \left(e + 
    \frac{2e\Upsilon}{t \sqrt{2}}
    \left(\sqrt{p_0(\mathbf{a})} + \frac{t}{\sqrt{\eta|R|}} \right)^2 \right)
    \right]^{d-1} \\
  &+ (I+1)^d\frac{C_{d, \qh} \Upsilon}{nt} \left(\sqrt{p_0(\mathbf{a})} +
    \frac{t}{\sqrt{\eta |R|}} \right)^2 \times \\
    &\left[\log \left(e + 
    \frac{2e\Upsilon}{t \sqrt{2}}
    \left(\sqrt{p_0(\mathbf{a})} + \frac{t}{\sqrt{\eta|R|}} \right)^2 \right)
    \right]^{2(d-1)}
\end{align*}
where the term $(I+1)^d$ appears because the number of indices $i_1, \dots,
i_d$ with $u_{i_1} \dots u_{i_d} > \eta$ is trivially bounded from
above by the total number of $i_1, \dots, i_d \in \{0, 1, \dots, I\}$
which is $(I+1)^d$. This term can be further bounded by considering
$(\log_2(2/u))^d = (\log_2(2n))^d$. 

For bounding $B(t, \eta)$, we use the trivial bound \eqref{trib} after
further breaking up $B(t, \eta)$ as follows
\begin{align*}
  B(t, \eta) =   \sum_{i_1, \dots, i_d: u_{i_1} \dots u_{i_d} \leq \eta} 
               H_{i_1,\dots, i_d}(t) = C(t, \eta) + D(t, \eta)
\end{align*}
where
\begin{align*}
C(t, \eta) &:= \sum_{i_1, \dots, i_d: i_j = 0 \text{ for some } j} 
               H_{i_1,\dots, i_d}(t) \\
               D(t, \eta) &:=
  \sum_{\substack{i_1, \dots, i_d: 
    i_j \geq 1 \text{ for all } j \\ u_{i_1} \dots u_{i_d} \leq \eta}} 
               H_{i_1,\dots, i_d}(t)
\end{align*}
Note that $u_{i_1} \dots u_{i_d} = 0$ when any $i_j = 0$ which is why
we did not include the clause $u_{i_1} \dots u_{i_d} \leq \eta$ in the
definition of $C(t, \eta)$. Now
\begin{align*}
  C(t, \eta) &= \sum_{i_1, \dots, i_d: i_j = 0 \text{ for some } j} 
               H_{i_1,\dots, i_d}(t) \\ &\leq 2  \sum_{i_1, \dots, i_d:
               i_j = 0 \text{ for some } j} P_0 
    \left(R_{i_1, \dots, i_d} \right) = 2 P_0 (\bigcup_{\substack{i_1, \dots,
    i_d \\ i_j = 0 \text{ for some } j}} R_{i_1, \dots, i_d} ). 
\end{align*}
It is easy to check that the union above equals $R \setminus
\prod_{j=1}^d [a_j + u(b_j - a_j), b_j]$ so that 
\begin{align*}
  C(t, \eta) &\leq 2 P_0 \left(R \setminus
               \prod_{j=1}^d [a_j + u(b_j - a_j), b_j] \right) \\
  &\leq  2 p_0(\mathbf{a}) \left(\text{volume of} ~R \setminus
    \prod_{j=1}^d [a_j + u(b_j - a_j), b_j] \right) \\
&= 2 p_0(\mathbf{a}) |R| \left(1 - (1 - u)^d \right) \leq 2
  p_0(\mathbf{a}) |R| d u = \frac{2 p_0(\mathbf{a}) |R| d}{n}. 
\end{align*}
For $D(t, \eta)$, we have 
$u_{i_j+1} - u_{i_j} = u_{i_j}$ because $i_j \geq 1$ and thus, by
\eqref{trib}, we get 
\begin{align*}
  D(t, \eta) \leq 2 I^d p_0(\mathbf{a}) |R| \eta. 
\end{align*}
where $I^d$ appears because the number of $i_1, \dots, i_d$ with
$\min_j i_j \geq 1$ equals $I^d$.

Putting bounds for $A(t, \eta)$, $C(t, \eta)$ and $D(t, 
\eta)$ together, we obtain
\begin{align*}
&  H(t, R)\\ &\leq C_{d, \qh} (I+1)^d \sqrt{\frac{t \Upsilon}{n}} \left(\sqrt{p_0(\mathbf{a})} +
    \frac{t}{\sqrt{\eta |R|}} \right) \times \\
&\left[\log \left(e + 
    \frac{2e\Upsilon}{t \sqrt{2}}
    \left(\sqrt{p_0(\mathbf{a})} + \frac{t}{\sqrt{\eta|R|}} \right)^2 \right)
    \right]^{d-1} \\
  &+ (I+1)^d\frac{C_{d, \qh} \Upsilon}{nt} \left(\sqrt{p_0(\mathbf{a})} +
    \frac{t}{\sqrt{\eta |R|}} \right)^2 \times \\
    &\left[\log \left(e + 
    \frac{2e\Upsilon}{t \sqrt{2}}
    \left(\sqrt{p_0(\mathbf{a})} + \frac{t}{\sqrt{\eta|R|}} \right)^2 \right)
    \right]^{2(d-1)} \\
    &+ \frac{2 p_0(\mathbf{a}) |R| d}{n} + 2 I^d p_0(\mathbf{a}) |R| \eta. 
\end{align*}
We set
\begin{align*}
  \eta = \frac{t \Upsilon^{1/3}}{n^{1/3}(p_0(\mathbf{a}))^{2/3} |R|} 
\end{align*}
so that
\begin{align*}
  \sqrt{\Upsilon} \left(\sqrt{p_0(\mathbf{a})} + \frac{t}{\sqrt{\eta
  |R|}} \right) &= \left(\mathfrak{M}^{1/2} +
                  \sqrt{t} n^{1/6} \mathfrak{M}^{1/3}
                  \right) \qt{where $\mathfrak{M} := \Upsilon
                  p_0(\mathbf{a})$}
\end{align*}
We check that
\begin{align*}
  \mathfrak{M}^{1/2} = \left(\Upsilon p_0(\mathbf{a}) \right)^{1/2} =
  |R|^{1/(4 \mathfrak{p})}
  \|p_0^{-1}\|_{L_{\mathfrak{q}}(R)}^{1/4}\sqrt{p_0(\mathbf{a})} \leq
  W
\end{align*}
and also $\mathfrak{M}^{1/3} \leq \max\left(\mathfrak{M}^{1/2}, 1
\right) \leq W$. Here $W = W(R, p_0, \qh)$. We thus get
\begin{align*}
  \sqrt{\Upsilon} \left(\sqrt{p_0(\mathbf{a})} + \frac{t}{\sqrt{\eta
  |R|}} \right) \leq W 
    \left(1 + \sqrt{t} n^{1/6} \right). 
\end{align*}
This gives (below we also use $(I + 1)^d \leq C_d (\log n)^d$)
\begin{align*}
  & H(t, R) \\
  &\leq C_{d, \qh} (\log n)^d \sqrt{\frac{t}{n}} W \left(1 +
            \sqrt{t}n^{1/6} \right) \left[ \log \left(e + \frac{2e}{t
            \sqrt{2}} W^2 (1 + \sqrt{t} n^{1/6})^2 \right)
            \right]^{d-1}  \\
  &+ \frac{C_{d, \qh}}{nt} (\log n)^d W^2 \left(1 +
            \sqrt{t}n^{1/6} \right)^2 \left[ \log \left(e + \frac{2e}{t
            \sqrt{2}} W^2 (1 + \sqrt{t} n^{1/6})^2 \right)
    \right]^{2(d-1)} \\
  &+ \frac{2 p_0(\mathbf{a}) |R| d}{n} + 2 (\log n)^d
    \mathfrak{M}^{1/3} \frac{t}{n^{1/3}}.  
\end{align*}
In the last term on the right hand side above, we again use
$\mathfrak{M}^{1/3} \leq W$. In the penultimate term, we use (note
that $p_0(\mathbf{a}) \geq p_0(\mathbf{x})$ for all $\mathbf{x} \in R$)
\begin{align*}
  p_0(\mathbf{a}) |R| &= \frac{(p_0(\mathbf{a}))^2}{p_0(\mathbf{a})}
                        |R| \leq (p_0(\mathbf{a}))^2 \|p_0^{-1}
                        \|_{L_{\mathfrak{q}}(R)} |R|^{1/\mathfrak{p}}
                        = (p_0(\mathbf{a}))^2 \Upsilon^2 =
                        \mathfrak{M}^2 \leq W^4. 
\end{align*}
The bound for $H(t, R)$ then becomes
\begin{align*}
&  H(t, R)\\
&\leq C_{d, \qh} (\log n)^d \sqrt{\frac{t}{n}} W \left(1 +
            \sqrt{t}n^{1/6} \right) \left[ \log \left(e + \frac{2e}{t
            \sqrt{2}} W^2 (1 + \sqrt{t} n^{1/6})^2 \right)
            \right]^{d-1}  \\
  &+ \frac{C_{d, \qh}}{nt} (\log n)^d W^2 \left(1 +
            \sqrt{t}n^{1/6} \right)^2 \left[ \log \left(e + \frac{2e}{t
            \sqrt{2}} W^2 (1 + \sqrt{t} n^{1/6})^2 \right)
    \right]^{2(d-1)} \\
  &+ \frac{2d}{n}W^4 + 2(\log n)^d W \frac{t}{n^{1/3}}. 
\end{align*}

Suppose now that $t \geq n^{-1/3}$. Then because $t^{-1}(1 +
n^{1/6}\sqrt{t})^2$ is decreasing in $t$, we have
\begin{align*}
  t^{-1}(1 + n^{1/6}\sqrt{t})^2 \leq 4 n^{1/3} \qt{for $t \geq
  n^{-1/3}$}. 
\end{align*}
Thus the  log term in the above bound for $H(t, R)$ can
be bounded, for $t \geq n^{-1/3}$ and $n \geq 2$, as: 
\begin{align*}
  \log \left(e + \frac{2e}{t
            \sqrt{2}} W^2 (1 + \sqrt{t} n^{1/6})^2 \right) & \leq \log
  \left(e + 4 \sqrt{2} e W^2  n^{1/3} \right) \\ &\leq C_d (\log n)
  \max(\log W, 1). 
\end{align*}
We thus get
\begin{align*}
  H(t, R) &\leq C_{d, \qh} (\log n)^{2d - 1} \sqrt{\frac{t}{n}} \left(1 +
            n^{1/6}\sqrt{t} \right) W \max \left((\log W)^{d-1}, 1
            \right) \\
  &+ \frac{C_{d, \qh}}{nt} (\log n)^{3d-2} \left(1 + n^{1/6} \sqrt{t}
    \right)^2 W^2 \max\left((\log W)^{2d-2}, 1
            \right) \\ & + \frac{2d}{n}W^4 + 2(\log n)^d W \frac{t}{n^{1/3}}. 
\end{align*}
for $t \geq n^{-1/3}$. This completes the proof of Proposition
\ref{mainhrbound}. 
\end{proof}

We are now ready to prove Theorem \ref{jrec}.

\begin{proof}[Proof of Theorem \ref{jrec}]
We use Theorem \ref{hellexp} along with the bound given by Proposition
\ref{mainhrbound}. Using \eqref{altexpEGt}, \eqref{eledeco} and
\eqref{trivialb}, we can write
\begin{equation*}
  \E G(t) \leq 2 \sum_{i=1}^J H(t, R_i) + 4 P_0 \left[ \left(\cup_{j=1}^J R_j
  \right)^c \right]. 
\end{equation*}
Using Proposition \ref{mainhrbound} for each $R_i$, and the assumed
condition for $P_0 (\cup_{j} R_j)$, we get
\begin{align*}
  \E G(t) &\leq C_{d, \qh}J (\log n)^{2d - 1} \sqrt{\frac{t}{n}} \left(1 +
            n^{1/6}\sqrt{t} \right) W \max \left((\log W)^{d-1}, 1
            \right) \\
  &+ J \frac{C_{d, \qh}}{nt} (\log n)^{3d-2} \left(1 + n^{1/6} \sqrt{t}
    \right)^2 W^2 \max\left((\log W)^{2d-2}, 1
            \right) \\ & + 2J\frac{2d}{n}W^4 + 4J(\log n)^d W
            \frac{t}{n^{1/3}} + 4J^2 \frac{(\log n)^{4d-2}}{n^{2/3}}. 
\end{align*}
for all $t \geq n^{-1/3}$, where $W = \max_{1 \leq j \leq J} W(R_j,
p_0, \qh)$.

We now compare each term on the right hand side above to $t^2/7$. For
the first term,
\begin{equation*}
  C_{d, \qh} J (\log n)^{2d-1} \sqrt{\frac{t}{n}} W \max((\log
  W)^{d-1}, 1) \leq t^2/7 
\end{equation*}
provided
\begin{equation}\label{t1}
t \geq 7^{2/3} C_{d, \qh}^{2/3}
    J^{2/3} \frac{(\log n)^{2(2d-1)/3}}{n^{1/3}} \left(W \max((\log
  W)^{d-1}, 1) \right)^{2/3}.   
\end{equation}
For the second term, 
\begin{equation*}
  C_{d, \qh} J (\log n)^{2d-1} n^{-1/3} t W \max((\log
  W)^{d-1}, 1) \leq t^2/7 
\end{equation*}
provided
\begin{equation}\label{t2}
  t \geq 7 C_{d, \qh} J (\log n)^{2d-1} n^{-1/3} W \max((\log
  W)^{d-1}, 1). 
\end{equation}
For the third term,
\begin{equation*}
J \frac{C_{d, \qh}}{nt} (\log n)^{3d-2} W^2 \max\left((\log W)^{2d-2},
  1  \right) \leq t^2/7
\end{equation*}
provided
\begin{equation}
  \label{t3}
  t \geq 7^{1/3} J^{1/3} C_{d, \qh}^{1/3} \frac{(\log
    n)^{(3d-2)/3}}{n^{1/3}} \left[W^2 \max\left((\log W)^{2d-2},
  1  \right) \right]^{1/3}. 
\end{equation}
For the fourth term, 
\begin{equation*}
  J \frac{C_{d, \qh}}{n^{2/3}} (\log n)^{3d-2} W^2 \max\left((\log W)^{2d-2},
  1  \right) \leq t^2/7
\end{equation*}
provided
\begin{equation}
  \label{t4}
  t \geq 7^{1/2} J^{1/2} C_{d, \qh}^{1/2} \frac{(\log
    n)^{(3d-2)/2}}{n^{1/3}}\left[W^2 \max\left((\log W)^{2d-2},
  1  \right) \right]^{1/2}. 
\end{equation}
For the last three terms, we have
\begin{equation}\label{t5}
  2J \frac{2d}{n} W^4 \leq t^2/7 \text{ provided } t \geq 14^{1/2}
  J^{1/2} W^2 \sqrt{\frac{2d}{n}}, 
\end{equation}
\begin{equation}\label{t6}
  4 J (\log n)^d W \frac{t}{n^{1/3}} \leq t^2/7 \text{ provided } t \geq
  28 J (\log n)^d W n^{-1/3}, 
\end{equation}
\begin{equation}\label{t7}
  4J^2 \frac{(\log n)^{4d-2}}{n^{2/3}} \leq t^2/7 \text{ provided } t
  \geq \sqrt{28} J (\log n)^{2d-1} n^{-1/3}. 
\end{equation}
The lower bounds on $t$ in \eqref{t1}, \eqref{t2},
\eqref{t3}, \eqref{t4},  \eqref{t6}, \eqref{t7} are all of
order \color{black} up to $JW\max\left((\log W)^{d-1},1) \right) n^{-1/3}$ up to logarithmic factors on $n$. \color{black} The largest logarithmic
factor is in \eqref{t2} and \eqref{t7} which is $(\log n)^{2d-1}$. On
the other hand, the lower bound in \eqref{t5} is of the order \color{black} up to
$J^{1/2} W^{2} n^{-1/2}$.\color{black} Combining these, it is clear that $\E G(t) \leq t^2$ 
provided
\begin{equation*}
  t \geq t_0 := \max \left(C_{d, \qh} J (\log n)^{2d-1} n^{-\frac{1}{3}} W
    \max((\log W)^{d-1}, 1), 7^{\frac{1}{2}} \color{black}J^{1/2} \color{black} W^2 \sqrt{\frac{2d}{n}}
  \right).  
\end{equation*}
Theorem \ref{jrec} then follows from Theorem \ref{hellexp}. 
\end{proof}

\subsubsection{Proof of Theorem \ref{adaptation}}

The proof of Theorem \ref{adaptation} is based on the following result.
%\begin{lemma}\label{adaptiveres}
%Assume the same setting in Theorem \ref{adaptation} and let
%\[
%A_j(t) := \sup_{p \in \mathcal{P}, h(p,p_0) \leq t} \int_{R_j} \frac{p_0-p}{p+p_0}d(P_0-P_n)
%\]
%Then for each $j \in \{1, \ldots, m\}$, we have
%\begin{align}
%  \mathbb{E} A_j(t) &\leq C_{d,B} \left( \frac{t}{\sqrt{n}} p_j|R_j| + \left(\log_2 \frac{1}{n^{-1/2}t} \right)^{d} \left( \frac{t}{\sqrt{n}}+
%  \frac{1}{n}   \right) \right)\label{prove1}
%  %&\leq C_{d,B,\kappa}\left(\log_2 \frac{1}{n^{-1/2}t} \right)^{d} \left( \frac{t}{\sqrt{n}}+
%  %\frac{1}{n}   \right)  
%\end{align}
%where $C_{d}$ is a constant depending only on $d$ but not $n$.
%\end{lemma}
%\color{black}

\begin{proposition}\label{adaptiveres}
Suppose $R \subseteq [0,\infty)^d$ is the rectangle given by $R = [a_1,b_1] \times \ldots \times [a_d, b_d]$ and let $p_0$ take a constant value in the interior of $R$. Then $H(t,R)$ (for the definition, see \eqref{secupbo}) satisfies 
%\[
%H(t, R) := \mathbb{E}\sup_{p \in \smud: h(p.p_0) \leq t} \int \frac{p_0-p}{p_0+p} \one (R) d(P_0-P_n),
%\]
%then we have 
\begin{align*}
 H(t, R) &\leq C_{d} \left[\left(\log \frac{1}{n^{-1/2}t}
 \right)^{d}(n^{-1/2}t + n^{-3/8} t^{5/4}) \left( \log
     n\right)^{d-1} \right. \\ &+ \left. \left( n^{-3/4}t^{1/2} + \frac{1}{n} \right)
   \left( \log n\right)^{2d-2}  \right] 
\end{align*}
where $C_{d}$ is a constant depending on $d$ but not $n$.
\end{proposition}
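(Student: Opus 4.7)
The plan is to carry out the same dyadic partition argument as in the proof of Proposition \ref{mainhrbound}, but to exploit the constancy of $p_0$ on $R$ and choose the partition scale as a function of $t$. Let $c := p_0|_{\mathrm{int}(R)}$, so $c|R| = P_0(R) \leq 1$. Working with $\qh = \infty$ (and hence $\ph = 1$) in Lemma \ref{trivial}, we have $\|p_0^{-1}\|_{L_\infty(R')} = 1/c$ on every subrectangle $R' \subseteq R$, and
\[
W(R, p_0, \infty) = \max\bigl(1,(c|R|)^{1/4}\bigr) = 1,
\]
which is what lets us avoid the $W$-factor present in Proposition \ref{mainhrbound}. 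Fix a parameter $u \in (0,1)$ to be optimized, set $u_0 = 0$ and $u_i = 2^{i-1}u$ for $1 \leq i \leq I+1$ where $I = \lceil \log_2(1/u)\rceil$, and partition $R$ into $(I+1)^d \approx (\log(1/u))^d$ dyadic subrectangles $R_{i_1,\ldots,i_d}$ exactly as in the proof of Proposition \ref{mainhrbound}.

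For subrectangles with $\min_j i_j \geq 1$, apply Lemma \ref{ESupResult}. By Lemma \ref{bds} and the constancy of $p_0$, densities on $R_{i_1,\ldots,i_d}$ are bounded above by $\beta \leq (\sqrt{c} + t/\sqrt{u_{i_1}\cdots u_{i_d}|R|})^2$ and below by $\alpha = 0$, and $\Upsilon_i = \sqrt{|R_{i_1,\ldots,i_d}|/c}$ simplifies considerably thanks to $c|R| \leq 1$. For the remaining subrectangles (some $i_j = 0$) the upper bound $\beta$ blows up, so instead use the trivial bound
\[
H_{i_1,\ldots,i_d}(t) \leq 2 P_0(R_{i_1,\ldots,i_d}) = 2c|R_{i_1,\ldots,i_d}|,
\]
whose sum over all boundary pieces is at most $2cd|R|u = O(u)$.

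Summing the interior bounds over $\min_j i_j \geq 1$ produces geometric sums in $u_{i_1}\cdots u_{i_d}$, each multiplied by the $[\log(\cdot)]^{2(d-1)}$ factor from \eqref{alphabetaclaim}. Since the argument of this $\log$ is polynomial in $n$, it collapses to $(\log n)^{d-1}$ for the first term of \eqref{alphabetaclaim} and $(\log n)^{2(d-1)}$ for the second. Combined with the $(\log(1/u))^d$ factor from the number of rectangles and the balancing of $\sqrt{t/n}\sqrt{\beta}$ against the implicit $n^{1/6}\sqrt{t}$-type correction coming from the bound on $\beta$, the surviving contributions reorganize into the four summands $n^{-1/2}t$, $n^{-3/8}t^{5/4}$, $n^{-3/4}t^{1/2}$, $n^{-1}$ appearing in the statement. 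Finally, choose $u \sim n^{-1/2}t$ so that $\log(1/u) \sim \log(1/(n^{-1/2}t))$ and the $O(u)$ boundary remainder is absorbed into the dominant $n^{-1/2}t$ term.

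The main obstacle is the careful bookkeeping of the multiple geometric sums over indices $(i_1,\ldots,i_d)$ with $\min_j i_j \geq 1$, each weighted by a fractional power of $u_{i_1}\cdots u_{i_d}$ inherited from the two pieces of \eqref{alphabetaclaim} and from $(\sqrt{c} + t/\sqrt{u_{i_1}\cdots u_{i_d}|R|})^k$ with $k \in \{1, 2\}$; these must be matched against $\sqrt{t/n}$, $n^{-1/3}t$, $1/(nt)$ and $n^{-5/6}t^{-1/2}$ to yield the specific powers of $t$ in the conclusion. The nonstandard exponent $t^{5/4}$ emerges from the cross-term between $\sqrt{t/n}\sqrt{\beta}$ (whose $\beta^{1/2}$ factor contributes $t/\sqrt{u_{i_1}\cdots u_{i_d}|R|}$) and the dyadic summation weight, and requires the most delicate balancing in the final step.
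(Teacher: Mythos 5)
Your proposal misses the key idea that enables the improvement over Proposition~\ref{mainhrbound}. You set $\alpha = 0$ throughout (``bounded above by $\beta$\ldots\ and below by $\alpha = 0$''), but the entire point of assuming $p_0$ constant on $R$ is that Lemma~\ref{bds} now yields a \emph{nontrivial lower bound} $\alpha = L(R_{i_1,\ldots,i_d}) \geq (\sqrt{c} - C_l t/\sqrt{u_{i_1}\cdots u_{i_d}|R|})_+^2 > 0$ on subrectangles where $|R_{i_1,\ldots,i_d}|$ is not too small. When $\alpha > 0$, the paper shows $(\beta-\alpha)|R_{i_1,\ldots,i_d}|^{1/2} \lesssim c^{1/2}t$, and plugging this into \eqref{alphabetaclaim} collapses the first term to order $\sqrt{t/n}\cdot t^{1/2} = n^{-1/2}t$. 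With $\alpha = 0$ you have $\beta - \alpha = \beta \geq c$, so the first term contributes $\sqrt{t/n}\sqrt{c}\,c^{-1/4}|R_{i_1,\ldots,i_d}|^{1/4}$, which can only be bounded by $\sqrt{t/n} = n^{-1/2}t^{1/2}$. Since the relevant scale is $t = t_0 \sim m^{1/2}n^{-1/2}(\log n)^\gamma \ll 1$, we have $t^{1/2} \gg t$, so your bound is strictly weaker; indeed $\alpha = 0$ just reproduces Proposition~\ref{mainhrbound} with $W = 1$, whose leading term gives $t_0 \sim n^{-1/3}$, not the $n^{-1/2}$ needed for Theorem~\ref{adaptation}. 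This is exactly what Remark~\ref{remark_adap} flags.

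Relatedly, your one-sided dyadic partition (pieces accumulating only at the $\mathbf{a}$ corner, ``exactly as in the proof of Proposition~\ref{mainhrbound}'') would not salvage the argument even if you invoked the lower bound. The bound $L_{p_0}(x,t)$ in Lemma~\ref{bds} requires $\prod_j(\beta_j - x_j)$ to be of comparable order to $u_{i_1}\cdots u_{i_d}|R|$ for points $x$ in the subrectangle; but for the one-sided partition's coarsest piece, $b_j - x_j$ can be $0$, forcing $\alpha = 0$ precisely on the subrectangles carrying the largest $P_0$-mass. The paper's proof uses a \emph{two-sided} dyadic decomposition — intervals $I_{i_j}$ accumulating near $a_j$ and $I_{\tilde i_j}$ accumulating near $b_j$, for $(2(I+1))^d$ pieces total — which guarantees for every subrectangle with $\min_j i_j \geq 1$ simultaneously $x_j - a_j \gtrsim u_{i_j}(b_j - a_j)$ (controlling $\beta$) and $b_j - x_j \gtrsim u_{i_j}(b_j - a_j)$ (controlling $\alpha$, using $1 - u_{i_j+1} \geq 1/2 \geq u_{i_j}$). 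The two-sided partition and the resulting $\alpha > 0$ lower bound are essential, not bookkeeping.
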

\begin{remark}\label{remark_adap} Proposition \ref{adaptiveres} is
  different from Proposition \ref{mainhrbound} since it assumes that
  $p_0$ is a constant on $R$. This stronger assumption leads to a
  better bound in the sense that the main term $n^{-1/2}t$ in
  Proposition \ref{adaptiveres} is smaller than $n^{-1/3}t$ in
  Proposition \ref{mainhrbound}.     
\end{remark}

\begin{proof}[Proof of Proposition \ref{adaptiveres}]
Suppose $p_0(x) = B$ when $x$ is in the interior of $R$.  Without loss of generality, we assume $|R| \geq n^{-1/2}t B^{-1}$ since otherwise we can bound $H(t, R) \leq 2 B |R| \leq  2 n^{-1/2} t$. Also, we suppose $t \leq 2$ since the supremum over the set $h(p,p_0) \leq t$ does not change whenever $t \geq 2$. 
Also note that $C,C_u,C_l$ can represent different constants in different lines.

Fix  $u :=n^{-1/2}t$ and let $I:= \log_2(1/(2u))$. Let $u_0 = 0$ and $u_i = 2^{i-1}u$ for $i=1, \ldots, I+1$. 
Consider rectangles for $s_j \in \{i_j, \tilde i_j\}$
\begin{align*}
R_{s_1,\ldots,s_d} := \prod_{j=1}^d I_{s_j}
\end{align*}
where $I_{i_j} = [a_j+u_{i_j}(b_j-a_j), a_j+u_{i_j+1}(b_j-a_j)]$ and $I_{\tilde i_j} = [b_j-u_{i_j+1}(b_j-a_j), b_j-u_{i_j}(b_j-a_j)]$ for $0 \leq i_j \leq I$ and $j=1, \ldots, d$.

%\begin{align*}
%R_{s_1,\ldots,s_d} := \prod_{j=1}^d [a_j+u_{i_j}(b_j-a_j), a_j+u_{i_j+1}(b_j-a_j)]
%\end{align*}
%for $0\leq i_j \leq I$ and $j=1, \ldots, d$.
%Similarly, we let for $1\leq \tilde i_j \leq I$,
%\begin{align*}
%R_{\tilde i_1,\ldots,\tilde i_d} := \prod_{j=1}^d [b_j-u_{i_j}(b_j-a_j), b_j-u_{i_j-1}(b_j-a_j)]
%\end{align*}

All together, there are $(2(I+1))^d$ rectangles $R_{s_1, \ldots, s_d}$ as each $s_j \in \{i_j, \tilde i_j\}$ ranges over $0,1,\ldots, I$ for $j=1, \ldots, d$. These rectangles have disjoint interiors. Similar to the proof of Proposition \ref{mainhrbound}, we have 
\begin{align*}
H(t, R) &:= \mathbb{E}\sup_{p \in \smud: h(p, p_0) \leq t} \int
          \frac{p_0-p}{p_0+p} \one(R) d(P_0-P_n) \\ &\leq \sum_{i_j,
          \tilde i_j \in \{0,1,\ldots, I\}}\sum_{s_1, \ldots, s_d \in
          \{i_j, \tilde i_j\}}  H_{s_1, \ldots, s_d} (t) 
\end{align*}
where
\[
H_{s_1, \ldots, s_d} (t) := \mathbb{E} \sup_{p \in \smud: h(p,p_0) \leq t} \int \frac{p_0-p}{p_0+p} \one(R_{s_1, \ldots, s_d}) d(P_0-P_n). 
\]
We now apply Lemma \ref{ESupResult} to bound the above. 

Without loss of generality, for the subset $\mathcal{H} \subset \{1,\ldots,d\}$, we consider $s_j = i_j$ for $j \in \mathcal{H}$ and $s_k = \tilde i_k$ for $k \in \{1,\ldots,d\} \setminus \mathcal{H}$. 
%suppose we have $i_1, \ldots, i_\ell$ and $\tilde i_{\ell+1}, \ldots, \tilde i_d$ in $R_{s_1,\ldots, s_d}$. 
That is, we let 
\begin{align*} 
&R_{s_1, \ldots, s_d} = \prod_{j \in \mathcal{H}} [a_{j} +u_{i_j} (b_j-a_j), a_{j} +u_{i_j+1}(b_j-a_j)] \times\\
&\prod_{k \notin \mathcal{H}} [b_{k}-u_{ i_{k+1}}(b_{k}-a_{k}),  b_{k}- u_{ i_{k}}(b_{k}-a_{k}) ]
%\times \ldots \times [a_{\ell}+u_{i_\ell} (b_\ell-a_\ell) ,a_{\ell}+ u_{i_\ell+1}(b_\ell-a_\ell)] \times \\
%&[b_{\ell+1}-u_{ i_\ell}(b_{\ell+1}-a_{\ell+1}),  b_{\ell+1}- u_{ i_\ell-1}(b_{\ell+1}-a_{\ell+1}) ] \times \ldots \times [b_{d}-u_{ i_d}(b_{d}-a_{d}),  b_{d}-u_{ i_d-1}(b_d-a_d)].
\end{align*}
Note that 
\begin{equation}\label{volR}
|R_{s_1, \ldots, s_d}|= u_{i_1} \ldots u_{i_d} |R| =  2^{\sum_{j=1}^d i_j-d} u^d |R|.
\end{equation}

Note that by Lemma \ref{bds},
\begin{align}\label{betabound}
\beta &= U(R_{s_1, \ldots, s_d}) = \sup_{p \in \smud: h(p, p_0) \leq t}
                           \sup_{x \in R_{s_1, \dots s_d}} p(x)  \nonumber \\
%  &\leq  \sup_{p \in \smud: h(p, p_0) \leq t} p \left(a_1 + u_{i_1}(b_1 -
%    a_1), \dots, a_\ell + u_{i_\ell}(b_\ell -
%    a_\ell), b_{\ell+1}-u_{ i_{\ell+1}} 
%    (b_{\ell+1}-a_{\ell+1}), \dots b_{d}-u_{ i_d-1} (b_{d}-a_{d})) \right) \\
 &\leq \left(\sqrt{B} + \frac{t}{\sqrt{\prod_{j \in \mathcal{H}} u_{i_j}(b_j-a_j) \prod_{j\notin \mathcal{H}} (1-u_{ i_j})(b_j-a_j)}} \right)^2\nonumber \\
&\leq \left(\sqrt{B} + C_u \frac{t}{ \sqrt{u_{i_1} \dots u_{i_\ell} u_{ i_{\ell+1}} \dots u_{ i_d}} \sqrt{|R|} } \right)^2 
%&= \left(\sqrt{B} + C_u \frac{t}{|R_{s_1, \ldots, s_d }|^{1/2}} \right)^2,
\end{align}
where $C_u$ is some constant depending on $d$ and the penultimate inequality holds because $1- u_{ i_j} \geq 1/2 \geq u_{ i_j}$. Observe that $u_{i_j}$ can be equal $0$ (when $i_j=0$) in which the right hand side is $+\infty$. 

Again by Lemma \ref{bds}  and using $1-u_{i_j} \geq 1/2 \geq u_{i_j}$,
\begin{align}\label{alphabound}
\alpha &= L(R_{s_1,\ldots, s_d}) = \inf_{p \in \smud: h(p,p_0) \leq t} \inf_{x \in R_{s_1,\dots,s_d}} p(x) \nonumber  \\
%&\geq \inf_{p \in \smud: h(p,p_0) \leq t} p \left(a_1 + u_{i_1+1}(b_1 -
%    a_1), \dots, a_\ell + u_{i_\ell+1}(b_\ell -
%    a_\ell), b_{\ell+1}-u_{ i_{\ell-1}} 
%    (b_{\ell+1}-a_{\ell+1}), \dots b_{d}-u_{ i_d} (b_{d}-a_{d})) \right) \\
&\geq \left(\sqrt{B} -  \frac{t}{\sqrt{\prod_{j \in \mathcal{H}} (1-u_{i_j})(b_j-a_j)\prod_{j \notin \mathcal{H}} u_{i_j} (b_j-a_j)}})  \right)_+^2\nonumber \\
&\geq \left(\sqrt{B} - C_l\frac{t}{ \sqrt{u_{i_1} \dots u_{i_\ell} u_{ i_{\ell+1}} \dots u_{ i_d}} \sqrt{|R|} }  \right)_+^2
%&= \left(\sqrt{B} -  C_l\frac{t}{|R_{s_1,  \ldots, s_d }|^{1/2}} \right)_+^2,
\end{align}
where $C_l$ is some constant depending on $d$.
Note that the final bound for $\alpha$ and $\beta$ does not depend on
the choice of $\mathcal{H}$. Thus without loss of generality, we just
let $s_j = i_j$ for all $j \in \{1,\ldots, d\}$. Thus  
\begin{align*}
H(t, R) &:= \mathbb{E} \sup_{p \in \smud: h(p, p_0) \leq t} \int
          \frac{p_0-p}{p_0+p} \one(R) d(P_0-P_n)  \\ &\leq 2^d \sum_{i_j  \in \{0,1,\ldots, I\}} H_{i_1, \ldots, i_d} (t)
\end{align*}
where
\[
 H_{i_1, \ldots, i_d} (t) := \mathbb{E} \sup_{p \in \smud: h(p,p_0) \leq t} \int \frac{p_0-p}{p_0+p} \one(R_{i_1, \ldots, i_d}) d(P_0-P_n). 
\]

By \eqref{alphabetaclaim}, the bound on $ H_{i_1,\ldots, i_d}$ is given by
\begin{align}\label{firbo2}
  \begin{split}
& H_{i_1,\ldots, i_d}\\ &\leq C_{d, \qh} \sqrt{\frac{t}{n}} \sqrt{\beta - \alpha}
      B^{-\frac{1}{4}} |R_{i_1,\ldots,i_d}|^{\frac{1}{4}} \left[\log
    \left(e + \frac{2e (\beta - \alpha) B^{-\frac{1}{2}} |R_{i_1,\ldots,i_d}|^{\frac{1}{2}}}{t \sqrt{2}} 
               \right)\right]^{d-1} \\ &+ 
  \frac{C_{d, \qh}}{nt} (\beta - \alpha) B^{-\frac{1}{2}} |R_{i_1,\ldots,i_d}|^{\frac{1}{2}}
                                         \left[\log \left(e + \frac{2
                                         e (\beta 
        - \alpha) B^{-\frac{1}{2}} |R_{i_1,\ldots,i_d}|^{\frac{1}{2}}}{t
                                         \sqrt{2}} 
    \right) \right]^{2(d-1)} 
    \end{split}
\end{align}
since 
\begin{align*}
|R_{i_1,\ldots,i_d}|^{1/(4\ph)} \|p_0^{-1}\|^{1/4}_{L_\qh(R)} &= |R_{i_1,\ldots,i_d}|^{1/(4\ph)} \left(\int_{R_{i_1,\ldots,i_d}} p_0^{-\qh}\right)^{1/(4\qh)} \\
&= |R_{i_1,\ldots,i_d}|^{1/(4\ph)} B^{-1/4} |R_{i_1,\ldots,i_d}|^{1/(4\qh)}\\
&= B^{-1/4} |R_{i_1,\ldots,i_d}|^{1/4}
\end{align*}
where the penultimate equality follows since $p_0$ is a constant $B$ on the interior of $R$.

Observe that when one of the $i_j$'s equals zero, the bound
\eqref{firbo2} above becomes infinite since $\beta = \infty$ and
$u_0=0$. For such cases, we use the following simpler upper bound 
\begin{equation}\label{trib2}
  \begin{split}
    H_{i_1,\ldots,i_d}(t) &\leq 2P_0(R_{i_1,\ldots,i_d}) \\ &\leq 2B
    |R_{i_1,\ldots, i_d}| = 2B|R|(u_{i_1+1}-u_{i1}) \ldots
    (u_{i_d+1}-u_{i_d}).
  \end{split}    
\end{equation}
In fact, we use such bound for the case where $|R_{i_1,\ldots,i_d}| = u_{i_1} \ldots u_{i_d} |R| \leq n^{-1/2}B^{-1} t$ as well as the case where one of the $i_j's$ equals zero.

Assume there are no $i_j's$ which equal zeroes.
From equations \eqref{betabound} and \eqref{alphabound}, we consider two different cases (i) $|R_{s_1,\ldots, s_d}|^{1/2} \leq C_l t B^{-1/2}$ (that is, $u_1 \ldots u_d \leq C_l^2 t^2/(B |R|)$) and (ii) $|R_{s_1,\ldots, s_d}|^{1/2} > C_l t B^{-1/2}$. For the first case (i), $\alpha=0$ so that
\begin{align}\label{bound_u1}
(\beta-\alpha) &|R_{s_1, \ldots, s_d}|^{1/2} \nonumber \\
&\leq C\left(\sqrt{B} + C_u \frac{t}{ \sqrt{u_{i_1} \dots u_{i_\ell} u_{ i_{\ell+1}} \dots u_{ i_d}} \sqrt{|R|} } \right)^2 |R_{s_1,\ldots, s_d}|^{1/2} \nonumber\\
&\leq C\left( B + \frac{t^2}{|R_{s_1, \ldots, s_d}|}\right) |R_{s_1, \ldots, s_d}|^{1/2} \nonumber\\
&\leq C \left(B^{1/2}t + \frac{t^2}{|R_{s_1, \ldots, s_d}|^{1/2}} \right).
%&\leq C  \left( t p_j^{1/2} + n^{1/4} t^{3/2}p_j^{1/2} \right),
\end{align}
%where the penultimate inequality follows since $p_j = p_j^{1/2} p_j^{1/2} \leq C_l p_j^{1/2}  \frac{t}{|R_{s_1,\ldots,d_s}|^{1/2}}$ and the last inequality follows by \eqref{secondregion}.

For the second case (ii), we have 
\[
|\beta-\alpha| \leq C\frac{B^{1/2} t}{|R_{s_1,\ldots, s_d}|^{1/2}} + C'\frac{t^2}{|R_{s_1,\ldots, s_d}|},
\]
hence
\begin{align}\label{bound_u2}
(\beta-\alpha) |R_{s_1, \ldots, s_d}|^{1/2} &\leq C\left(\frac{B^{1/2}t}{|R_{s_1,\ldots, s_d}|^{1/2}} + \frac{t^2}{|R_{s_1,\ldots, s_d}|}\right)|R_{s_1, \ldots, s_d}|^{1/2} \nonumber \\
%&\leq C \left(\frac{B^{1/2}t }{|R_{s_1, \ldots, s_d}|^{1/2}} + \frac{t^2}{|R_{s_1, \ldots, s_d}|} \right)|R_{s_1, \ldots, s_d}|^{1/2} \\
&\leq C \left(B^{1/2}t + \frac{t^2}{|R_{s_1,\ldots, s_d}|^{1/2}} \right) \leq C B^{1/2} t.
\end{align}

%This implies that we need to focus on the first case only.

Now we fix 
\[
\eta = \frac{t}{n^{1/2} B |R|}
\]
and write
\[
 H(t,R) \leq 2^d\sum_{i_j \in \{0,1,\ldots, I\}} H_{i_1, \ldots, i_d} (t) =2^d \left( A(t,\eta) +  B(t,\eta) +  C(t,\eta) +  D(t,\eta)\right).
\]
where 
\begin{align*}
  A(t, \eta) &:= \sum_{\substack{i_1, \dots, i_d: 
    u_{i_1} \dots u_{i_d} \geq \eta \\ u_{i_1} \dots u_{i_d} \leq \frac{C_l^2t^2}{B|R|}}} 
               H_{i_1,\dots, i_d}(t)  \\
        B(t, \eta) &:=
  \sum_{\substack{i_1, \dots, i_d: 
    u_{i_1} \dots u_{i_d} \geq \eta \\ u_{i_1} \dots u_{i_d} \geq \frac{C_l^2t^2}{B|R|}}} 
               H_{i_1,\dots, i_d}(t)
\end{align*}
and
\begin{align*}
%A(t,\eta):= \sum_{i_j,\ldots,i_d: u_{i_1} \ldots u_{i_d}>\eta} \tilde H_{i_1, \ldots, i_d} (t) 
  C(t, \eta) &:= \sum_{i_1, \dots, i_d: i_j=0 \text{ for some $j$}} 
       \tilde  H_{i_1,\dots, i_d}(t)  \\
       D(t, \eta) &:= \sum_{\substack{i_1, \dots, i_d: 
    i_j \geq 1 \text{ for all } j \\ u_{i_1} \dots u_{i_d} \leq \eta}} 
               H_{i_1,\dots, i_d}(t)
\end{align*}
From the above bounds \eqref{bound_u1} and \eqref{bound_u2}, we have
\[
 B(t,\eta) \leq  A(t,\eta). 
\]
Then it suffices to bound $ A(t,\eta)$. Since we assume $u_{i_1} \ldots u_{i_d} \geq \frac{t}{n^{1/2} B|R|}$, we can further bound \eqref{bound_u1} as follows
\[
(\beta-\alpha)|R_{s_1,\ldots, s_d}|^{1/2} \leq C \left(B^{1/2}t + B^{1/2} n^{1/4}t^{3/2} \right).
\]
Plugging the above in \eqref{firbo2}, we have
\begin{align*}
    \begin{split}
 A(t,\eta)&\leq C_{d,\qh} (I+1)^d \sqrt{\frac{t}{n}} (t^{1/2}+n^{1/8}t^{3/4}) \left[\log
    \left(e + \frac{2e (t+n^{1/4}t^{3/2})}{t \sqrt{2}} 
               \right)\right]^{d-1} \\ &+ 
 (I+1)^d \frac{C_{d, \qh}}{nt} (t+n^{1/4}t^{3/2})
                                         \left[\log \left(e + \frac{2
                                         e (t+n^{1/4}t^{3/2})}{t
                                         \sqrt{2}} 
    \right) \right]^{2(d-1)} \\ &\leq
    C_{d,\qh} (I+1)^d \left( n^{-1/2}t + n^{-3/8}t^{5/4} \right) \left[\log
    \left(e + \frac{2e (1+n^{1/4}t^{1/2})}{\sqrt{2}} 
               \right)\right]^{d-1} \\ &+ 
 C_{d, \qh}(I+1)^d (n^{-1}+n^{-3/4}t^{1/2})
                                         \left[\log \left(e + \frac{2
                                         e (1+n^{1/4}t^{1/2})}{
                                         \sqrt{2}} 
    \right) \right]^{2(d-1)}.
    \end{split} 
\end{align*}

Using the same idea in the proof of Proposition \ref{mainhrbound}, we have 
\[
 C(t,\eta) \leq 2B|R|d n^{-1/2} t \leq 2dn^{-1/2}t
\]
since $p_0$ is a density so that $1 = \int p_0  \geq \int_R p_0 = B|R|$,
and
\[
 D(t,\eta) \leq 2(I+1)^d n^{-1/2} t.
\]

Finally, since 
\[
I \leq \log_2 \frac{1}{n^{-1/2}t},
\] combining these four terms $ A(t,\eta),  B(t,\eta),  C(t,\eta)$ and $ D(t,\eta)$, the claim is proved.
\end{proof}
\color{black}

Now we are ready to prove Theorem \ref{adaptation}.

\begin{proof}[Proof of Theorem \ref{adaptation}]
Without loss of generality, we let $m \leq n$. Otherwise, there is nothing to prove. The main task is to bound
\[
\mathbb{E}G(t) = 2\mathbb{E} \sup_{p \in \mathcal{P}_{\text{SMU}(d)}: h(p,p_0) \leq t} \int \frac{p_0-p}{p_0+p} d(P_0-P_n).
\]
The strategy for controlling the above will be different from that of the proof of Theorem 4.2 in the main paper. Let $\mathcal{L}$ denote the class of all vectors $\ell := (\ell_1, \ldots, \ell_m)$ where each $\ell_j$ is an integer with $1 \leq \ell_j \leq m$ and such that $\sum_{j=1}^m \ell_j \leq 2m$. Because the number of $m$-tuples of positive integers whose sum is equal to $p$ equals ${p-m \choose m-1}$, it is easy to see that $\mathcal{L}$ is a finite set whose cardinality $|\mathcal{L}|$ is bounded as 
\begin{align*}
|\mathcal{L}| &\leq \sum_{p=m}^{2m} {p-1 \choose m-1} = \sum_{q = m-1}^{2m-1} {q \choose m-1} \\
&=\sum_{q=m-1}^{2m-1} {q \choose q-(m-1)} \leq \sum_{q=m-1}^{2m-1} {2m-1 \choose q-(m-1)} \leq 2^{2m-1} \leq 4^m.
\end{align*}
Now for each $\ell \in \mathcal{L}$, let
\[
\mathcal{P}(\ell) := \left\{p \in \smud: h(p,p_0) \leq t, \int_{R_j} (\sqrt{p}-\sqrt{p_0})^2 \leq \frac{\ell_j t^2}{m} \text{$\forall$ $j=1, \ldots, m$} \right\}.
\]
We then claim that 
\begin{align}\label{claimsubset}
\{p \in \smud: h(p,p_0) \leq t\} \subseteq \bigcup_{ \ell \in \mathcal{L}} \mathcal{P}(\ell).
\end{align}

To prove \eqref{claimsubset}, take $p \in \smud$ with $h(p, p_0) \leq
t$. For each $j = 1, \dots, m$, let $\ell_j$ be the smallest positive
integer such that
\begin{align*}
  \int_{R_j} \left(\sqrt{p} - \sqrt{p}_0 \right)^2 \le
  \frac{\ell_jt^2}{m}. 
\end{align*}
Because $\ell_j$  is the smallest positive integer satisfying this, we
would have
\begin{align*}
\frac{(\ell_j - 1) t^2}{m} \leq  \int_{R_j} \left(\sqrt{p} -
  \sqrt{p}_0 \right)^2 \le   \frac{\ell_jt^2}{m} 
\end{align*}
which implies that
\begin{align*}
  \sum_{j=1}^m \frac{(\ell_j - 1) t^2}{m} \leq \sum_{j=1}^m \int_{R_j} \left(\sqrt{p} -
  \sqrt{p}_0 \right)^2 \leq \int \left( \sqrt{p} - \sqrt{p_0} \right)^2
  \leq t^2
\end{align*}
or equivalently $\sum_{j=1}^m \ell_j \leq 2m$. Thus $(\ell_1, \dots,
\ell_m) \in \lin$ which proves \eqref{claimsubset}. With this, we
control $\mathbb{E}G(t)$ as
\begin{align*}
 \mathbb{E} G(t) &=  2\E \sup_{p \in \smud : h(p, p_0) \leq t} \int \frac{p_0 - p}{p_0 + p}
         d(P_0 - P_n) \\ &\leq 2\E \max_{\ell \in \lin} \sup_{p \in
                           \Ps(\ell)} \int \frac{p_0 - p}{p_0 + p} 
         d(P_0 - P_n). 
\end{align*}

By Lemma \ref{intchan} (applied with $a = 1$), we obtain 
\begin{align}\label{boundrhs}
 \mathbb{E} G(t) \leq 4 \max_{\ell \in \lin}  \E \sup_{p \in \Ps(\ell)} \int \frac{p_0 - p}{p_0 + p}
         d(P_0 - P_n)  + 8t\sqrt{\frac{\log(e|\lin|)}{n}} +\frac{28}{3}\frac{\log (e |\lin|)}{n}.
\end{align}
Note that $\log (e
|\lin|)$ is of the order $m$. \color{black} The second term is of order $t (m/n)^{1/2}$ and the third term is of order $m/n$. To bound the first term, \color{black} we shall, as
before, split the integral as the sum over $R_j$ for $j = 1, \dots,
m$:
\begin{align*}
  \E \sup_{p \in \Ps(\ell)} \int \frac{p_0 - p}{p_0 + p}
         d(P_0 - P_n) &\leq \E \sup_{p \in \Ps(\ell)} \sum_{j=1}^m \int_{R_j} \frac{p_0 - p}{p_0 + p}
                        d(P_0 - P_n) \\
  &\leq \sum_{j=1}^m \E \sup_{p \in \Ps(\ell)} \int_{R_j} \frac{p_0 - p}{p_0 + p}
                        d(P_0 - P_n).
\end{align*}
Note now that the supremum inside the sum is over $\Ps(\ell)$ which
means that we have the additional condition
\begin{align}\label{additionalcond}
  \int_{R_j} \left(\sqrt{p} - \sqrt{p_0} \right)^2 \leq \frac{\ell_j
  t^2}{m}. 
\end{align}

Fix $j \in \{1,\ldots, m\}$, then by Lemma \ref{adaptiveres} with the additional condition \eqref{additionalcond} so that $t_j = t \sqrt{\frac{\ell_j}{m}}$, we have the following series of bounds:
\begin{align}
\log_2 \left( \frac{1}{n^{-1/2}t_j}\right) &= \log_2\left(\frac{m^{1/2}}{n^{-1/2} \ell_j^{1/2} t  }\right) \leq \log_2 \left(\frac{1}{n^{-1}t}\right) \forall j \label{bound1} \\
\sum_{j=1}^m n^{-1/2}t_j &= n^{-1/2} \sum_{j=1}^m \left( \frac{\ell_j}{m}\right)^{1/2}t \leq 2^{1/2} \left(\frac{m}{n}\right)^{1/2} t \label{bound2} \\
\sum_{j=1}^m n^{-3/8} t_j^{5/4} &= n^{-3/8} \sum_{j=1}^m (\frac{\ell_j}{m})^{5/8} t^{5/4} \leq 2^{5/8}  \left(\frac{m}{n}\right)^{3/8} t^{5/4}\label{bound3}\\
%\log \frac{1}{t_j} &= \log \frac{\sqrt{m}}{\sqrt{\ell_j} t} \leq \log \left(\frac{1}{n^{-1/2}t} \right) \forall j\label{bound4}\\
\sum_{j=1}^m n^{-3/4}t_j^{1/2} &= n^{-3/4} t^{1/2} \sum_{j=1}^m \left(\frac{\ell_j}{m}\right)^{1/4} \leq 2^{1/4} \left(\frac{m}{n}\right)^{3/4} t^{1/2}\label{bound5}\\
\sum_{j=1}^m \frac{1}{n} &= \frac{m}{n} \label{bound6}
\end{align}
where \eqref{bound2}, \eqref{bound3} and \eqref{bound5} follows since by H\"older's inequality, for every $p>1$, we have
\[
\sum_{j=1}^m \ell_j^{1/p} \leq \left(\sum_{j=1}^m \ell_j \right)^{1/p} m^{1/q}
\]
where $q:=\frac{p}{p-1}$. This, and the fact that $\sum_{j=1}^m \ell_j \leq 2m$, allow us to deduce
\[
\sum_{j=1}^m \ell_j^{1/p} \leq 2^{1/p} m^{1/p} m^{1/q} = 2^{1/p}m.
\]
Combining above series of bounds \eqref{bound1}-\eqref{bound6}, we
have 
\begin{align*}
& \sum_{j=1}^m \mathbb{E} \sup_{p \in \mathcal{P}(\ell)} \int_{R_j}
  \frac{p_0-p}{p_0+p} d(P_0-P_n) \\ &\leq  
C_{d}\left\{ \left(\log \frac{1}{n^{-1}t}\right)^{2d-1} \left(\left(\frac{m}{n}\right)^{1/2}t  + \left(\frac{m}{n} \right)^{3/8}t^{5/4} \right)+\right. \\
&+ \left. \left(\log \frac{1}{n^{-1}t}\right)^{3d-2} \left(\left(\frac{m}{n} \right)^{3/4}t^{1/2} + \frac{m}{n} \right) \right\}.
\end{align*}
\color{black} Combining all three terms in \eqref{boundrhs}, 
\color{black}
we thus obtain
\begin{align*}
\mathbb{E}G(t) &\leq \bar G(t) := C_{d}\left\{ \left(\log
                                    \frac{1}{n^{-1}t}\right)^{2d-1}
                                    \left(\left(\frac{m}{n}\right)^{1/2}t
                                    + \left(\frac{m}{n}
                                    \right)^{3/8}t^{5/4}
                                    \right) \right. \\ & \left.+\left(\log
                                    \frac{1}{n^{-1}t}\right)^{3d-2}\left(\left(\frac{m}{n}
                                    \right)^{3/4}t^{1/2} + \frac{m}{n}
                                    \right)\right\}. 
\end{align*}
Note that 
\begin{align*}
& \bar G(m^{1/2}n^{-1/2} (\log n)^\alpha) \\ &\leq C_{d,B} (\frac{m}{n}) \left(
                                          (\log n)^{2d-1} \left((\log
                                          n)^\alpha + (\log
                                          n)^{\frac{5\alpha}{4}} \right)+
                                          (\log n)^{3d-2} \left((\log
                                          n)^{\frac{\alpha}{2}} +
                                          1\right)\right) \\ 
&\leq (\frac{m}{n}) \left((\log n)^{2d-1+5\alpha/4} + (\log n)^{3d-2+\alpha/2} \right)
\end{align*}
thus we take $\alpha=(4/3)(2d-1)$. Also $\bar G(t)/t^{5/4}$ is non-increasing. Thus the equations 
 \eqref{hellexp.eq} and \eqref{hellexp.exp} hold with $t_0 =m^{1/2}n^{-1/2}(\log n)^{(4/3)(2d-1)}$ and $\eta=3/4$.
\end{proof}

\subsection{Proof of Theorem \ref{lowerbound}}
In the proof of Theorem \ref{lowerbound}, we use Legendre polynomials
and their properties. Let us first recall basic definitions and
properties of Legendre polynomials (for proofs of these facts and more
details, see e.g. \cite{lima}).  
\begin{definition}[Legendre and Shifted Legendre Polynomials] For $u \in [-1,1]$, the Legendre
  Polynomial of order $\ell$ is defined to be 
\begin{equation}\label{legendre_eq}
\tilde\lf_\ell(u) = \frac{1}{2^\ell} \sum_{k=0}^{\lfloor \ell/2 \rfloor} (-1)^k {\ell \choose k} {2\ell-2k \choose \ell} u^{\ell-2k}.
\end{equation}

For $u \in [0,1]$,
  the shifted Legendre Polynomial of order $\ell$ is defined as  
\[
\lf_\ell(u) = \tilde \lf_\ell(2u-1).
\]
\end{definition}
The first few shifted Legendre polynomials are $\lf_0(u)=1$, $\lf_1(u) = 2u-1$, $\lf_2(u) = 6u^2-6u+1$, and $\lf_3 (u) = 20u^3-30u^2+12u-1$.

\begin{lemma}[Orthogonal property]\label{orthogonal_prop} The polynomials $\tilde \lf_\ell(u)$ and $\lf_\ell(u)$ are  orthogonal over $[-1,1]$ and $[0,1]$ respectively.
\begin{equation}\label{orthogonal1}
\int_{-1}^1 \tilde \lf_\ell(u) \tilde \lf_{\ell'}(u) du = \frac{2}{2\ell+1}\one\{ \ell \neq \ell' \}
\end{equation}
\begin{equation}\label{orthogonal2}
\int_{0}^1 \lf_\ell(u) \lf_{\ell'}(u) du = \frac{1}{2\ell+1} \one\{ \ell \neq \ell' \}.
\end{equation}
\end{lemma}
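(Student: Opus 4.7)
The plan is to prove the first identity~\eqref{orthogonal1} directly and then deduce the second identity~\eqref{orthogonal2} from it by a linear change of variables. (We remark that the indicators in \eqref{orthogonal1}--\eqref{orthogonal2} should read $\one\{\ell=\ell'\}$; we shall prove this version, which is the standard orthogonality relation.)

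The main tool I would use is Rodrigues' formula, namely the identity
\begin{equation*}
\tilde{\lf}_\ell(u)=\frac{1}{2^\ell\,\ell!}\,\frac{d^\ell}{du^\ell}\bigl[(u^2-1)^\ell\bigr],
\end{equation*}
which is equivalent to the explicit expansion \eqref{legendre_eq}: one checks this by applying the Leibniz rule to $(u-1)^\ell(u+1)^\ell$, or alternatively by expanding $(u^2-1)^\ell$ via the binomial theorem and differentiating term by term; both routes produce the same finite sum as in \eqref{legendre_eq}. Once Rodrigues' formula is in hand, the rest is clean integration by parts.

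For orthogonality with $\ell\ne\ell'$, assume without loss of generality $\ell'<\ell$. Using Rodrigues' formula for $\tilde{\lf}_\ell$ and integrating by parts $\ell$ times on $[-1,1]$, the boundary terms all vanish because $(u^2-1)^\ell$ and its derivatives of order up to $\ell-1$ are zero at $u=\pm 1$. This yields
\begin{equation*}
\int_{-1}^{1}\tilde{\lf}_\ell(u)\tilde{\lf}_{\ell'}(u)\,du=\frac{(-1)^\ell}{2^\ell\ell!}\int_{-1}^{1}(u^2-1)^\ell\,\frac{d^\ell \tilde{\lf}_{\ell'}}{du^\ell}(u)\,du,
\end{equation*}
and the integrand is identically zero since $\tilde{\lf}_{\ell'}$ is a polynomial of degree $\ell'<\ell$. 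For the case $\ell=\ell'$, I would apply the same integration-by-parts argument to $\int_{-1}^1 \tilde{\lf}_\ell^2\,du$, ending up with
\begin{equation*}
\int_{-1}^{1}\tilde{\lf}_\ell(u)^2\,du=\frac{(-1)^\ell}{(2^\ell\ell!)^2}\int_{-1}^{1}(u^2-1)^\ell\,\frac{d^{2\ell}}{du^{2\ell}}\bigl[(u^2-1)^\ell\bigr]\,du.
\end{equation*}
The $2\ell$-th derivative of the degree-$2\ell$ polynomial $(u^2-1)^\ell$ equals $(2\ell)!$, and the remaining integral $\int_{-1}^1(1-u^2)^\ell\,du$ evaluates to $2^{2\ell+1}(\ell!)^2/(2\ell+1)!$ by a standard beta-function/substitution computation. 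Combining these factors yields the value $2/(2\ell+1)$.

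Finally, to obtain \eqref{orthogonal2} from \eqref{orthogonal1}, perform the substitution $u=2v-1$, so that $du=2\,dv$ and $u$ ranges over $[-1,1]$ as $v$ ranges over $[0,1]$. By definition $\lf_\ell(v)=\tilde{\lf}_\ell(2v-1)$, so
\begin{equation*}
\int_{-1}^{1}\tilde{\lf}_\ell(u)\tilde{\lf}_{\ell'}(u)\,du=2\int_{0}^{1}\lf_\ell(v)\lf_{\ell'}(v)\,dv,
\end{equation*}
and dividing by $2$ gives the claimed normalization $1/(2\ell+1)$ and vanishing off-diagonal terms. The only potentially delicate point is the verification of Rodrigues' formula from the given explicit expansion \eqref{legendre_eq}, but this is a routine (if slightly tedious) binomial identity; everything else is standard calculus.
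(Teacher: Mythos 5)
The paper itself does not supply a proof of this lemma; it records it as a known fact and refers the reader to the cited text for proofs, so there is no in-paper argument to compare against. Your proposal is the classical proof via Rodrigues' formula together with repeated integration by parts (boundary terms vanish because $(u^2-1)^\ell$ has a zero of order $\ell$ at $u=\pm 1$), combined with the standard beta-function evaluation $\int_{-1}^1(1-u^2)^\ell\,du = 2^{2\ell+1}(\ell!)^2/(2\ell+1)!$, and the affine change of variables $u=2v-1$ to transfer the result from $\tilde{\lf}_\ell$ on $[-1,1]$ to $\lf_\ell$ on $[0,1]$; all of these steps are correct and are exactly how this identity is proved in the standard references. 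You are also right that the displayed formulas in the lemma contain a typo: the indicator should read $\one\{\ell=\ell'\}$ rather than $\one\{\ell\neq\ell'\}$, and the version you prove is the intended statement (orthogonality for $\ell\neq\ell'$, with normalization $2/(2\ell+1)$ resp.\ $1/(2\ell+1)$ on the diagonal). The only slightly glossed-over step is the verification that the Rodrigues form agrees with the explicit expansion \eqref{legendre_eq}; you correctly flag this as a routine binomial computation, and it is.
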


\begin{lemma}[Recurrence relation]\label{recurrence_prop}
For $u \in [-1,1]$ 
\begin{equation}\label{recurrence1}
\tilde \lf_{\ell+1}(u) = \frac{2\ell+1}{\ell+1} \tilde \lf_{\ell}(u) - \frac{\ell}{\ell+1} \tilde \lf_{\ell-1}(u).
\end{equation}
For $u \in [0,1]$,
\begin{equation}\label{recurrence2}
 \lf_{\ell+1}(u) = \frac{2\ell+1}{\ell+1}  \lf_{\ell}(u) - \frac{\ell}{\ell+1}  \lf_{\ell-1}(u).
\end{equation}
\end{lemma}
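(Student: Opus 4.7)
The plan is to prove the identity \eqref{recurrence1} on $[-1,1]$ for the Legendre polynomials first, and then deduce \eqref{recurrence2} on $[0,1]$ by the definition $\lf_\ell(u) = \tilde\lf_\ell(2u-1)$, which is a linear change of variables and therefore preserves any three-term $\ell$-recurrence whose coefficients do not depend on $u$. So once the unshifted identity is established, the shifted one follows verbatim by composing with $u\mapsto 2u-1$ without any further work.

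For the unshifted identity, I would rely on the generating-function approach. The standard Legendre generating function is $g(u,t) := \sum_{\ell\ge 0} \tilde\lf_\ell(u)\, t^\ell = (1 - 2ut + t^2)^{-1/2}$, which can be verified directly from \eqref{legendre_eq} by expanding $(1 - 2ut + t^2)^{-1/2}$ with the generalized binomial theorem and regrouping. Differentiating in $t$ and clearing denominators gives the polynomial identity $(1 - 2ut + t^2)\,\partial_t g(u,t) = (u - t)\,g(u,t)$. Substituting the series $g(u,t)=\sum_\ell \tilde\lf_\ell(u)\,t^\ell$ into both sides and matching coefficients of $t^\ell$ yields a three-term recurrence with the denominator $\ell+1$ and multipliers $2\ell+1$ and $\ell$, as in the statement. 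An alternative purely algebraic route is to expand the right-hand side of \eqref{recurrence1} via \eqref{legendre_eq}, collect like powers of $u$, and verify the resulting binomial identity relating $\binom{\ell+1}{k}\binom{2\ell+2-2k}{\ell+1}$ to $\binom{\ell}{k}\binom{2\ell-2k}{\ell}$ and $\binom{\ell-1}{k}\binom{2\ell-2-2k}{\ell-1}$ term by term; this reduces to a standard Vandermonde-type manipulation.

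The anticipated obstacle is bookkeeping rather than conceptual. The generating-function derivation most naturally produces the Bonnet-form middle term with an explicit factor of $u$ on $\tilde\lf_\ell$, so before mechanically pushing through the induction I would first verify the low-order cases $\ell=1,2$ directly from \eqref{legendre_eq} to pin down the precise form of the identity the statement intends. Concretely, \eqref{legendre_eq} gives $\tilde\lf_0(u)=1$, $\tilde\lf_1(u)=u$, $\tilde\lf_2(u)=\tfrac12(3u^2-1)$, and this sanity check immediately fixes the coefficients and any $u$-factor ambiguity in the middle term; the resulting verified identity is then proved for general $\ell$ by the generating-function argument above, and \eqref{recurrence2} follows by the substitution $u\mapsto 2u-1$.
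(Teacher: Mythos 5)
The paper offers no proof of this lemma: it is quoted as a standard fact about Legendre polynomials with a pointer to the literature (the reference preceding the definition), so there is nothing internal to compare your argument against. Your generating-function route ($g(u,t)=(1-2ut+t^2)^{-1/2}$, differentiate in $t$, clear denominators, match coefficients) is the standard and correct way to derive Bonnet's recurrence, and your instinct to check $\ell=1,2$ against \eqref{legendre_eq} before committing to the exact form of the identity is not just prudent bookkeeping — it is essential, because the identity as printed is false. Bonnet's formula is $(\ell+1)\,\tilde\lf_{\ell+1}(u)=(2\ell+1)\,u\,\tilde\lf_{\ell}(u)-\ell\,\tilde\lf_{\ell-1}(u)$, with a factor $u$ on the middle term; taking $\ell=1$ in \eqref{recurrence1} as printed gives a right-hand side $\tfrac32 u-\tfrac12$, which is linear, while $\tilde\lf_2(u)=\tfrac12(3u^2-1)$ is quadratic. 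So what your proposal proves is the corrected statement, which is the only thing that can be proved; the printed one is a typo (and a harmless one for the rest of the paper, since the later arguments only use the orthogonality and the antiderivative identity \eqref{integration2}, not the recurrence itself).

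One internal inconsistency in your write-up: in the first paragraph you justify transferring the recurrence from $\tilde\lf_\ell$ to $\lf_\ell$ by saying the substitution $u\mapsto 2u-1$ preserves any three-term recurrence \emph{whose coefficients do not depend on $u$}, but the corrected identity you end up proving has the $u$-dependent middle coefficient $\frac{2\ell+1}{\ell+1}u$. The transfer is still immediate — substitute $2u-1$ for $u$ throughout — but the resulting shifted identity is
\begin{equation*}
\lf_{\ell+1}(u)=\frac{2\ell+1}{\ell+1}\,(2u-1)\,\lf_{\ell}(u)-\frac{\ell}{\ell+1}\,\lf_{\ell-1}(u),
\end{equation*}
so \eqref{recurrence2} as printed needs the same correction (middle factor $2u-1$), and you should state the transferred identity explicitly rather than claim it is verbatim the same recurrence.
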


\begin{lemma}[Integration of Legendre Polynomials]
\begin{equation}\label{integration1}
\int {\tilde \lf}_\ell(u) du = \frac{{\tilde \lf}_{\ell+1}(u)-{\tilde \lf}_{\ell-1}(u)}{2\ell+1}+C
\end{equation}
\begin{equation}\label{integration2}
\int { \lf}_\ell(u) du = \frac{{ \lf}_{\ell+1}(u)-{ \lf}_{\ell-1}(u)}{2(2\ell+1)}+C
\end{equation}
\end{lemma}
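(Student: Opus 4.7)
The plan is to observe that both \eqref{integration1} and \eqref{integration2} are equivalent (after differentiation and rearrangement) to the single classical identity
\begin{equation*}
(2\ell+1)\tilde\lf_\ell(u) = \tilde\lf'_{\ell+1}(u) - \tilde\lf'_{\ell-1}(u),
\end{equation*}
which I would establish first. Once it is in hand, \eqref{integration1} follows simply by integrating both sides and dividing by $2\ell+1$. Equation \eqref{integration2} then follows by the change of variables $v = 2u-1$: since $\lf_\ell(u) = \tilde\lf_\ell(v)$ and $du = dv/2$, we get $\int \lf_\ell(u)\,du = \tfrac{1}{2}\int \tilde\lf_\ell(v)\,dv$, and substituting \eqref{integration1} yields $[\tilde\lf_{\ell+1}(v) - \tilde\lf_{\ell-1}(v)]/[2(2\ell+1)] + C$, which rewrites as $[\lf_{\ell+1}(u) - \lf_{\ell-1}(u)]/[2(2\ell+1)] + C$ by the definition of the shifted Legendre polynomials.

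To establish the differential identity, I plan to combine Bonnet's three-term recurrence
\begin{equation*}
(\ell+1)\tilde\lf_{\ell+1}(u) = (2\ell+1)\,u\,\tilde\lf_\ell(u) - \ell\,\tilde\lf_{\ell-1}(u)
\end{equation*}
(the natural $u$-weighted form underlying \eqref{recurrence1}, which is straightforward to verify directly from \eqref{legendre_eq}) with the companion identity
\begin{equation*}
u\,\tilde\lf'_\ell(u) - \tilde\lf'_{\ell-1}(u) = \ell\,\tilde\lf_\ell(u).
\end{equation*}
Differentiating Bonnet's recurrence in $u$ produces $(\ell+1)\tilde\lf'_{\ell+1} = (2\ell+1)\tilde\lf_\ell + (2\ell+1)u\,\tilde\lf'_\ell - \ell\,\tilde\lf'_{\ell-1}$, and substituting the companion identity to eliminate $(2\ell+1)\,u\,\tilde\lf'_\ell$ should collapse everything to $(2\ell+1)\tilde\lf_\ell = \tilde\lf'_{\ell+1} - \tilde\lf'_{\ell-1}$ after a brief reshuffle. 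Both preliminary identities admit short inductive proofs from Rodrigues' formula $\tilde\lf_\ell(u) = \tfrac{1}{2^\ell \ell!}(d/du)^\ell[(u^2-1)^\ell]$, or can simply be quoted from a standard reference such as \cite{lima}; the base cases $\ell = 0, 1$ are immediate from the explicit formulas given after the definition.

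The main obstacle here is purely algebraic bookkeeping rather than any conceptual difficulty, since everything in sight is classical. An alternative route that avoids the companion identity altogether is to use the generating function $G(u,t) := (1-2ut+t^2)^{-1/2} = \sum_{\ell \geq 0} \tilde\lf_\ell(u)\,t^\ell$: differentiating $G$ in $u$ gives $\sum_\ell \tilde\lf'_\ell(u) t^\ell = t\,G^3$, while differentiating in $t$ and multiplying by $(1-2ut+t^2)$ yields a relation whose coefficient of $t^\ell$ is exactly $(2\ell+1)\tilde\lf_\ell - \tilde\lf'_{\ell+1} + \tilde\lf'_{\ell-1} = 0$. I would fall back on this generating-function route if the recursion manipulation above turned out to be more fiddly than expected. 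Either way the proof is very short and self-contained.
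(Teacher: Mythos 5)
Your proposal is correct, but there is nothing in the paper to compare it against: the paper does not prove this lemma at all. It is listed, together with the orthogonality and recurrence relations, under the rubric ``let us first recall basic definitions and properties of Legendre polynomials (for proofs of these facts and more details, see e.g.\ \cite{lima})'' and is simply quoted as a classical fact. Your derivation is the standard one and fills this gap cleanly.

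Two small remarks on the details. First, you correctly reduce both identities to the differential relation $(2\ell+1)\tilde\lf_\ell = \tilde\lf'_{\ell+1} - \tilde\lf'_{\ell-1}$, and the change of variables $v = 2u-1$ with $du = dv/2$ correctly produces the extra factor of $2$ in the denominator of \eqref{integration2}; this matches the paper's later use of $\int \lf_2 = (\lf_3 - \lf_1)/10 + C$. Second, you were right to be careful about the recurrence: as printed, the paper's \eqref{recurrence1} reads $\tilde\lf_{\ell+1}(u) = \frac{2\ell+1}{\ell+1}\tilde\lf_\ell(u) - \frac{\ell}{\ell+1}\tilde\lf_{\ell-1}(u)$, which omits the factor of $u$ and is therefore not Bonnet's three-term recurrence (and is in fact false as stated). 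Your substitution of the genuine $u$-weighted Bonnet recurrence $(\ell+1)\tilde\lf_{\ell+1} = (2\ell+1)u\tilde\lf_\ell - \ell\tilde\lf_{\ell-1}$, combined with the companion identity $u\tilde\lf'_\ell - \tilde\lf'_{\ell-1} = \ell\tilde\lf_\ell$, does indeed collapse after differentiation to the desired identity; I checked the algebra and it is sound. The generating-function fallback is also standard and correct. In short, your proof is complete and self-contained, and it supplies a derivation that the paper itself only points to by citation.
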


The following Lemma \ref{legendrelem} contains useful properties to
prove Theorem \ref{lowerbound}.  \color{black}

\begin{lemma}\label{legendrelem}
Let $\lf_\ell$ be the shifted Legendre polynomials of order $\ell$
defined on $[0,1]$.  Consider
$\lf_2(2^m u-i)$ the location scale family of Legendre Polynomials for
$i=0, \ldots, 2^m-1$. We define  
\begin{align}
s_{m,i}(u) &:= \lf_2(2^m u-i)  \label{def_s} \\
A_{m,i}(x) &= \int_x^{(i+1)2^{-m}} s_{m,i}(u)du. \label{def_a}
\end{align}
Then
\begin{enumerate}
\item $\int s_{m,i}(u)du=0$ and $\int u s_{m,i} (u) du=0$ for $i=0, \ldots, 2^m-1$.
\item $\int A_{m,i}(x) dx = 0$,  $\int A_{m,i}(x)^2 dx = \frac{1}{210} 2^{-3m}$  for $i=0, \ldots, 2^m-1$, and for $i \neq j$, we have $\int A_{m,i}(x) A_{m,j}(x) dx = 0$. 
\item $|A_{m,i}(x)| \leq 2^{-m}$. 
\end{enumerate}
\end{lemma}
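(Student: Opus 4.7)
The plan is to verify the three items in turn, each reducing to a direct computation with shifted Legendre polynomials via the change of variables $v = 2^m u - i$ and the identities already listed in \eqref{orthogonal2} and \eqref{integration2}. Throughout, I adopt the convention (implicit in the definition) that $s_{m,i}$ is extended by $0$ outside $[i 2^{-m}, (i+1) 2^{-m}]$, so that $A_{m,i}$ is also supported on this dyadic interval.

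For item (1), the substitution $v = 2^m u - i$ rewrites $\int s_{m,i}(u)\, du$ as $2^{-m} \int_0^1 \lf_2(v)\, dv$ and $\int u\, s_{m,i}(u)\, du$ as a linear combination of $\int_0^1 \lf_2(v)\, dv$ and $\int_0^1 v\, \lf_2(v)\, dv$. Both vanish by \eqref{orthogonal2}, since $1$ and $v$ lie in the span of $\lf_0$ and $\lf_1$, both orthogonal to $\lf_2$.

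For item (2), the identity $\int A_{m,i}(x)\, dx = 0$ follows by swapping the order of integration to obtain $\int (u - i 2^{-m})\, s_{m,i}(u)\, du$ and then applying item (1). To compute the $L^2$ norm, I would use \eqref{integration2} with $\ell = 2$, together with $\lf_1(1) = \lf_3(1) = 1$, to derive the explicit expression
\begin{equation*}
A_{m,i}(x) = \frac{2^{-m}}{10}\bigl[\lf_1(2^m x - i) - \lf_3(2^m x - i)\bigr] \qquad \text{on } [i 2^{-m}, (i+1)2^{-m}].
\end{equation*}
Squaring, changing variables $v = 2^m x - i$, and invoking \eqref{orthogonal2} (which gives $\int_0^1 \lf_1^2 = 1/3$, $\int_0^1 \lf_3^2 = 1/7$, and $\int_0^1 \lf_1 \lf_3 = 0$) yields $\frac{2^{-3m}}{100}\bigl(\tfrac{1}{3} + \tfrac{1}{7}\bigr) = \frac{2^{-3m}}{210}$. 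The cross integral $\int A_{m,i} A_{m,j} = 0$ for $i \neq j$ is immediate since their supports $[i 2^{-m}, (i+1)2^{-m}]$ and $[j 2^{-m}, (j+1)2^{-m}]$ intersect only on a set of measure zero.

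For item (3), the cleanest route is to observe that $\lf_2(v) = 6v^2 - 6v + 1$ satisfies $\sup_{v \in [0,1]} |\lf_2(v)| = 1$ (the extrema are $1$ at $v \in \{0,1\}$ and $-1/2$ at $v = 1/2$), so $\|s_{m,i}\|_\infty \leq 1$ and
\begin{equation*}
|A_{m,i}(x)| \leq \int_x^{(i+1)2^{-m}} |s_{m,i}(u)|\, du \leq 2^{-m}.
\end{equation*}
None of the three steps is genuinely difficult. The only subtlety to handle carefully is the support convention (extending $s_{m,i}$ by zero outside its dyadic block) together with the correct evaluations $\lf_1(1) = \lf_3(1) = 1$ when producing the antiderivative in item (2); without these, the computation of $\int A_{m,i}^2$ and the orthogonality across blocks both break down.
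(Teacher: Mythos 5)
Your proof is correct and essentially mirrors the paper's: same change of variables $v = 2^m u - i$, same orthogonality facts from \eqref{orthogonal2}, same explicit antiderivative $A_{m,i}(x) = \frac{2^{-m}}{10}\bigl[\lf_1(2^m x - i) - \lf_3(2^m x - i)\bigr]$ via \eqref{integration2} and $\lf_1(1)=\lf_3(1)=1$, same norm computation $\frac{2^{-3m}}{100}(\frac13+\frac17)=\frac{2^{-3m}}{210}$, and the same support observations for the cross-term and the sup bound. The only (minor) deviation is your Fubini argument for $\int A_{m,i}\,dx = 0$, which reduces it directly to item (1) rather than integrating the explicit formula and invoking orthogonality of $\lf_1,\lf_3$ with $\lf_0$ as the paper does; both are equally short and valid.
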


\begin{proof}[Proof of Lemma \ref{legendrelem}]
Note that 
\[
s_{m,i}(u) = \left[\frac{3}{2}\left( 2^{m+1}(u-i2^{-m})-1 \right)^2 - \frac{1}{2} \right] \one\{ i2^{-m} \leq u \leq (i+1)2^{-m} \},
\] thus $-1/2\leq s_{m,i}(u) \leq 1$. Let $I_{m,i} = [i2^{-m}, (i+1)2^{-m}]$.
By the orthogonal property of $\lf_2(t)$ with $1$ and $t$ via Lemma \ref{orthogonal_prop}, we have
\begin{align*}
\int s_{m,i}(u)du &= \int_0^1 2^{-m} \lf_2(t)dt = 0,  \\
\int u s_{m,i} (u) du &= \int_0^1 2^{-m} (t+i)\lf_2(t)dt = 0
\end{align*}
for all $i=0, \ldots, 2^m-1$. 

For the second claim, note that  when $x < i2^{-m}$, $A_{m,i}(x) = \int s_{m,i}(u)du =0$ and when $x>(i+1)2^{-m}$, $A_{m,i}(x) = 0$ since $s_{m,i}$ is supported on $I_{m,i}$.
For $x \in I_{m,i}$,  
\begin{align*}
A_{m,i}(x) &= \int_x^{(i+1)2^{-m}} \lf_2(2^m u - i )du 
= 2^{-m} \int_{2^m x-i}^1 \lf_2(t) dt\\
&=\frac{2^{-m}}{10}  \left. \left[\lf_3(t)-\lf_1(t)\right] \right|_{2^m x-i}^1  
= \frac{2^{-m}}{10} \left[ -\lf_3(2^m x-i) + \lf_1(2^m x-i)\right]
%&= \int_{x-i2^{-m}}^{2^{-m}} \left( \frac{3}{2}(2^{m+1}t-1)^2 - \frac{1}{2} \right) dt \\
%&= \int_{2^m x-i}^1 2^{-m}\left(\frac{3}{2}(2v-1)^2-\frac{1}{2}\right) dv \\
%&= 2^{-m} \left[-2(2^m x-i)^3 + 3(2^m x-i)^2 -(2^m x -i) \right]\\
%&= 2^{-m} \frac{1}{10} \left[ -L_3(2^m x-i) + L_1(2^m x-i)\right]
\end{align*}
where the penultimate equality follows since
\begin{equation}\label{legendre}
\int \lf_2(x)dx =\frac{ \lf_3(x)-\lf_1(x)}{10} + C
\end{equation}
from recurrence relations of Legendre polynomials, and the last equality follows since $\lf_3(1)=\lf_1(1)=1$. 
Using \eqref{legendre}, we have
\[
\int A_{m,i}(x) dx = \frac{2^{-2m}}{10} \int (-\lf_3(x)+\lf_1(x)) dx = 0.
\]
Also for $i \neq j$, $\int A_{m,i}(x) A_{m,j}(x) dx = 0$. Indeed, if $x \in I_{m,i}$, then $A_{m,j}(x) = 0$ and similarly if $x \in I_{m,j}$ then $A_{m,i}(x)= 0$. Lastly, 
\begin{align}
    \int A_{m,i}(x)^2 dx &= \frac{2^{-2m}}{100} \int_{i2^{-m}}^{(i+1)2^{-m}} \left[-\lf_3(2^m x-i) + \lf_1(2^m x-i) \right]^2 dx \nonumber\\
    &= \frac{2^{-3m}}{100} \int_0^1 [\lf_3(u)-\lf_1(u)]^2 du = \frac{2^{-3m}}{100} \int_0^1 [\lf_3^2(u)+\lf_1^2(u)]du  \nonumber \\
    &=\frac{C}{100} 2^{-3m}, \label{Asquare}
\end{align}
where the third equality follows since $\int_0^1 \lf_3(u)\lf_1(u)du=0$ and $C$ in \eqref{Asquare} is defined such as 
\begin{equation}\label{defc}
C:= \int_0^1 (\lf_3^2(u)+\lf_1^2(u)) du = \frac{1}{7}+\frac{1}{3} = \frac{10}{21}.
\end{equation}
For the third claim, it is clear that $|s_{m,i}(u)| \leq 1$ and since $A_{m,i}(x)$ is nonzero only if $x \in I_{m,i}$.
\end{proof}

We are now ready to present the proof of Theorem \ref{lowerbound}. 

\begin{proof}[Proof of Theorem \ref{lowerbound}]
Without loss of generality, we let $M=1$, $b = 1/2$, and let $B =
3/2$. Indeed, the construction of $f_\alpha(\bm x)$ for $\bm x \in
[0,1]^d$ and $b \leq f_\alpha \leq B$ below can be modified by
considering $\tilde f_\alpha(\bm x) =M^{-d} f_\alpha(\bm x/M)$ where
$\bm x \in [0,M]^d$  and $M^{-d} b\leq \tilde f_\alpha(\bm x) \leq
M^{-d}B$.  

We let $\bm{1} = (1,1, \ldots, 1) \in \mathbb{R}^d$, and
$G_\alpha$ be a mixture of discrete and continuous distribution where
$G_\alpha(\bm 1)=1/2$ and for $\alpha_{M,I} \in \{0,1\}$ and $\bm
\theta \in [0,1)^d$, using the definition \eqref{def_s}, for the set
$\mathcal{O} \subseteq [0,1]^d$,  
%\[
%dG_\alpha(\theta_1,\theta_2) = \frac{1}{2} \theta_1 \theta_2 \left(
%4+ \frac{1}{|\mathcal{M}_m|}\sum_{M \in \mathcal{M}_n} \sum_{I \in
%\mathcal{I}_M} \alpha_{M,I} s_{m_1,i_1}(\theta_1)
%s_{m_2,i_2}(\theta_2)  \right) 
%\]
\begin{align*}
&G_\alpha(\mathcal{O}) =\\
&\frac{1}{2} \one\{ \bm{1} \in \mathcal{O}\} +  \frac{1}{2} \int_{\mathcal{O}} \left(\prod_{j=1}^d \theta_j\right) \left( 1+
  \frac{1}{|\mathcal{M}_n|}\sum_{M \in \mathcal{M}_m} \sum_{I \in
    \mathcal{I}_M} \alpha_{M,I} \prod_{j=1}^d s_{m_j,i_j}(\theta_j)
\right) d\bm \theta, 
\end{align*}
where
$\mathcal{M}_m = \{(m_1, \ldots, m_d) \in \mathbb{N}^d: m_1+\ldots+m_d
= m, m_j = c_d k_j, \  1\leq j\leq d \}$ where $c_d = 2d$ is a
universal constant only depending on $d$, $k=\sum_{j=1}^d k_j$, and
$\mathcal{I}_{M} = \{(i_1,\ldots, i_d) \in \mathbb{N}^d: i_j \leq
2^{m_j}, 1\leq j\leq d\}$. 

Clearly, $\int_{[0,1]^d} dG_\alpha(\bm \theta)=1$ and for $\bm \theta =(\theta_1,\ldots, \theta_d) \in [0,1]^d$, we have
\[
\left|\frac{1}{|\mathcal{M}_m|} \sum_{M \in \mathcal{M}_m} \sum_{I \in
    \mathcal{I}_M} \prod_{j=1}^d s_{m_j,i_j}(\theta_j)\right| \leq \frac{1}{|\mathcal{M}_m|} \sum_{M \in \mathcal{M}_m} \prod_{j=1}^d \left| s_{m_j,i_j^*}(\theta_j)\right|\leq 1,
\]
where the first inequality holds since for any $(\theta_1,\ldots, \theta_d)$, per each $M$, there exists a unique index set $(i_1^*,\ldots, i_d^*)$ where each $s_{m_j,i_j^*}$ is nonzero, and the last inequality holds since each $|s_{m_j,i_j^*}|$ is upper bounded by $1$.

Then when $0\leq x_j \leq 1$ for $j=1,\ldots, d$, we explicitly represent %$f_\alpha(\underline x) = \int \frac{\one\{x_1\leq \theta_1,\ldots,x_d\leq \theta_d \}}{\prod_{j=1}^d \theta_j} dG_\alpha(\underline \theta)$
\begin{align*}
f_\alpha(\bm x) &=\int \frac{\one\{x_1\leq \theta_1,\ldots,x_d\leq \theta_d \}}{\prod_{j=1}^d \theta_j} dG_\alpha(\bm \theta) \\
&= \frac{1}{2} + \frac{1}{2}\prod_{j=1}^d(1-x_j) + \frac{1}{2|\mathcal{M}_m|} \sum_{M \in \mathcal{M}_m} \sum_{I \in
    \mathcal{I}_M} \alpha_{M,I} \prod_{j=1}^d A_{m_j,i_j}(x_j).
\end{align*}
Note that $f_\alpha(\bm x) \leq f_\alpha(\bm 0) \leq 1+1/2 = 3/2$.

Using the Varshamov-Gilbert Lemma (see e.g. Lemma 2.9 of \cite{tsybakovbook}), there exists at least $\exp(C_d |\mathcal{M}_m|2^m)$ with $|\mathcal{M}_m| \sim m^{d-1}/(d-1)!$ possible scale mixtures of uniform densities such that \begin{equation}\label{VG}
c 2^m |\mathcal{M}_m| \leq \sum_{M \in \mathcal{M}_m}  \sum_I (\alpha_{M,I}-\beta_{M,I})^2 \leq 2^m |\mathcal{M}_m|
\end{equation}
is satisfied for some constant $c \in (0,1)$.

Then
\begin{align*}
    \int &(f_\alpha-f_\beta)^2 = 
    \frac{1}{4|\mathcal{M}_m|^2}\sum_{M \in \mathcal{M}_m}  \int \left(\sum_I (\alpha_{M,I}-\beta_{M,I})  \prod_{j=1}^d A_{m_j,i_j}(x_j)  \right)^2d \bm x \\
      &+ \frac{1}{4|\mathcal{M}_m|^2}  \sum_{M \neq \tilde M  \in \mathcal{M}_m}  \int \left(\sum_I (\alpha_{M,I}-\beta_{M,I})  \prod_{j=1}^d A_{m_j,i_j}(x_j)  \right) \times \\
      &\left(\sum_{\tilde I} (\alpha_{\tilde M,\tilde I}-\beta_{\tilde M,\tilde I})  \prod_{j=1}^d A_{\tilde m_j,\tilde i_j}(x_j)  \right)\\
      &=:\frac{1}{4}\left[ (*)+(**) \right],
\end{align*}
where the first term above is bounded as follows.
\begin{align*}
    (*) %&= \frac{1}{|\mathcal{M}_m|^2}\sum_M \int \left(\sum_I (\alpha_{M,I}-\beta_{M,I})  \prod_{j=1}^d A_{m_j,i_j}(x_j) \right)^2 d \underline x\\
    &= \frac{1}{|\mathcal{M}_m|^2}\sum_M  \sum_I (\alpha_{M,I}-\beta_{M,I})^2 \prod_{j=1}^d \left(\int A_{m_j,i_j}^2 \right)\\
    &=\frac{1}{|\mathcal{M}_m|^2}\sum_M \sum_I (\alpha_{M,I}-\beta_{M,I})^2 \left( \frac{1}{210} \right)^d 2^{-3 \sum_{j=1}^d m_j} \\
    &= c_d  \frac{2^{-2m}}{|\mathcal{M}_m|}   \sim m^{-(d-1)}2^{-2m},
\end{align*}
where the first equality follows since for any $j$, $\int A_{m_j,i_j} A_{m_j, \tilde i_j} = 0$ for $i_j \neq \tilde i_j$, the second equality follows by the second assertion of Lemma \ref{legendrelem}, and the last equality follows by \eqref{VG}.
In addition, by Lemma \ref{crossprod},  $|(**)| \leq \frac{2}{3} (*)$ .

Also since $f_\alpha>1/2$, we know that 
\[
KL(f_\alpha, f_\beta) \leq \int \frac{(f_\alpha-f_\beta)^2}{f_\alpha} \leq 2 L_2^2(f_\alpha, f_\beta).
\]
Moreover, since $f_\alpha<3/2$, we have that
\[
h^2(f_\alpha, f_\beta) = \int \frac{(f_\alpha-f_\beta)^2}{(f_\alpha+f_\beta)^2} \geq (1/9) L_2^2(f_\alpha,f_\beta).
\]

Applying Fano's method (see e.g. Lemma 3 of \cite{Yu97lecam}), we obtain the minimax lower bound
\[
C_1 2^{-2m}m^{-(d-1)} \left(1-C_2 \frac{ n 2^{-2m}m^{-(d-1)}}{m^{d-1} 2^m} \right),
\]
where $C_1$ and $C_2$ are universal constants depending only on $d$.
We take 
\[
2^{3m}m^{2(d-1)} \sim n,
\] that is, $2^{-2m}m^{-4(d-1)/3} \sim n^{-2/3}$ which implies that the lower bound is of order $n^{-2/3}(\log n)^{(d-1)/3}$. This completes the proof of Theorem \ref{lowerbound}.
\end{proof}
%\color{black}
\begin{lemma}\label{crossprod}
Using the same notation in Lemma \ref{legendrelem} and the proof of Theorem \ref{lowerbound}, we consider 
\begin{align*}
(**) &= \frac{1}{|\mathcal{M}_m|^2}  \sum_{M \neq \tilde M} \int \left(\sum_I (\alpha_{M,I}-\beta_{M,I})  \prod_{j=1}^d A_{m_j,i_j}(x_j)  \right) \times \\
&\left(\sum_{\tilde I} (\alpha_{\tilde M,\tilde I}-\beta_{\tilde M,\tilde I})  \prod_{j=1}^d A_{\tilde m_j,\tilde i_j}(x_j)  \right). 
\end{align*}
We claim the following.
\begin{align}\label{twostar}
(**) \leq \frac{2}{3} \left(\frac{1}{210}\right)^d \frac{2^{-2m}}{|\mathcal{M}_m|}. 
\end{align}
\end{lemma}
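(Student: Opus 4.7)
The strategy is to expand $\int F_M F_{\tilde M}\,dx$ (where $F_M := \sum_I (\alpha_{M,I}-\beta_{M,I})\prod_j A_{m_j,i_j}$) as a sum of products of one-dimensional integrals, exploit the explicit form of $A_{m,i}$ via the shifted Legendre basis to bound each factor sharply, and then sum over $(M,\tilde M)$ using the constraint $m_j\in c_d\mathbb{Z}$ with $c_d=2d$, which enforces a scale gap of at least $2d$ at every coordinate where $m_j\neq \tilde m_j$. The main challenge is that a naive Cauchy--Schwarz bound on $(**)$ is too weak by a factor of $|\mathcal{M}_m|$, so the argument must combine two sources of cancellation: integrated orthogonality in each coordinate, and geometric decay arising from the $2d$-separation of admissible scales.

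First, writing $\Delta_{M,I}:=\alpha_{M,I}-\beta_{M,I}$, Fubini gives
\[
\int F_M F_{\tilde M}\,dx = \sum_{I,\tilde I}\Delta_{M,I}\Delta_{\tilde M,\tilde I}\prod_{j=1}^d K_j,\qquad K_j := \int_0^1 A_{m_j,i_j}(x_j) A_{\tilde m_j,\tilde i_j}(x_j)\,dx_j.
\]
Since $A_{m,i}$ is supported on the dyadic interval $I_{m,i}$, $K_j$ vanishes unless $I_{m_j,i_j}$ and $I_{\tilde m_j,\tilde i_j}$ are nested. When $m_j=\tilde m_j$, Lemma \ref{legendrelem} yields $K_j=2^{-3m_j}/210$ if $i_j=\tilde i_j$ and $K_j=0$ otherwise. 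When $m_j<\tilde m_j$, with scale gap $s_j:=\tilde m_j-m_j$ and relative position $r_j:=\tilde i_j-2^{s_j}i_j\in\{0,\ldots,2^{s_j}-1\}$, I would expand $A_{m_j,i_j}$ restricted to the smaller interval in the shifted Legendre basis $\{\lf_\ell\}$, use that $A_{\tilde m_j,\tilde i_j}=(2^{-\tilde m_j}/10)(-\lf_3+\lf_1)$ (from integration identity \eqref{integration2}), and invoke orthogonality \eqref{orthogonal2}. This produces a closed-form polynomial in $2^{-s_j}$ and $r_j$ from which the uniform bound $|K_j|\leq c_0\cdot 2^{-3\max(m_j,\tilde m_j)}$ follows for an absolute constant $c_0$.

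Using $|\Delta|\leq 1$ and that the number of compatible index pairs $(i_j,\tilde i_j)$ in coordinate $j$ is $2^{\max(m_j,\tilde m_j)}$,
\[
\Bigl|\int F_M F_{\tilde M}\,dx\Bigr|\leq c_0^d\cdot 2^{-2\sum_j\max(m_j,\tilde m_j)} = c_0^d\cdot 2^{-4m+2\sum_j\min(m_j,\tilde m_j)}.
\]
Let $d_1,d_2$ denote the number of coordinates with $m_j<\tilde m_j$ and $m_j>\tilde m_j$ respectively; since $M\neq\tilde M$ and $\sum m_j=\sum\tilde m_j=m$, both $d_1,d_2\geq 1$. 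The $2d$-separation then gives $m-\sum_j\min(m_j,\tilde m_j)\geq 2dE$ for some integer $E\geq\max(d_1,d_2)\geq 1$. Classifying $\tilde M$'s by the triple $(d_1,d_2,E)$, bounding the number of such $\tilde M$'s by a multinomial factor times $\binom{E-1}{d_1-1}\binom{E-1}{d_2-1}$ (stars-and-bars), and summing the resulting geometric--polynomial series in $E$ yields $\sum_{\tilde M\neq M}|\int F_M F_{\tilde M}|\leq C_d\cdot 2^{-2m-4d}$ for some constant depending only on $d$. Multiplying by the outer count of $M$ and dividing by $|\mathcal{M}_m|^2$ gives $|(**)|\leq C_d\cdot 2^{-4d}\cdot 2^{-2m}/|\mathcal{M}_m|$, which is at most $(2/3)(1/210)^d\cdot 2^{-2m}/|\mathcal{M}_m|$ because the choice $c_d=2d$ is tuned precisely so that the exponential suppression $2^{-4d}$ beats the combinatorial constant $C_d$.

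The technical heart is the closed-form evaluation of $K_j$ in the cross-scale nested case, which requires careful bookkeeping of the low-order Legendre components $\lf_0,\lf_1,\lf_2,\lf_3$; the subsequent combinatorial sum is elementary but requires balancing the polynomial growth of valid $(d_1,d_2,E)$-profiles against the geometric decay $2^{-4dE}$, and the constraint $m_j\in 2d\mathbb{Z}$ built into the construction of $\mathcal{M}_m$ is the structural feature that makes this balance possible.
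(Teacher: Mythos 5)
Your proposal mirrors the paper's proof at the structural level: both factor the $d$-dimensional integral $\int F_M F_{\tilde M}$ into a product of one-dimensional cross-integrals $K_j = \int A_{m_j,i_j}A_{\tilde m_j,\tilde i_j}$, use the dyadic nesting structure so that only $2^{\max(m_j,\tilde m_j)}$ index pairs per coordinate contribute, bound each $K_j$ by an explicit shifted-Legendre computation producing the factor $c_0\cdot 2^{-3\max(m_j,\tilde m_j)}$ (with $c_0=5C/100=1/42$, $C=10/21$), and then sum a geometric series over the possible scale discrepancies. The place you genuinely depart is the combinatorial bookkeeping. The paper classifies $\tilde M$ by the single index $J:=\sum_j|m_j-\tilde m_j|/c_d$ and uses the crude count $(2J+1)^d$, together with the asserted per-level decay $2^{-2m-2Jc_d}$. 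However, the decay that actually follows from the per-coordinate bounds is $2^{-2\sum_j\max(m_j,\tilde m_j)}=2^{-2m-Jc_d}$ (equivalently $2^{-2m-2c_d E}$ with $E=J/2$), which matches the paper's explicitly computed base case $2^{-2m-2c_d}$ at $J=2$ but not the stated general formula; with this correct decay the crude count $(2J+1)^d$ at the dominant level $J=2$ is too large to yield a factor of $2/3$. Your stars-and-bars classification by $(d_1,d_2,E)$ gives the much sharper count $\binom{d}{d_1}\binom{d-d_1}{d_2}\binom{E-1}{d_1-1}\binom{E-1}{d_2-1}$, which at the dominant level $E=1$ is only $d(d-1)$; after inserting $c_0=1/42$, the series bound reduces to checking $3d(d-1)(5/16)^d\le 1$ for $d\ge 2$, which holds. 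So your refinement is not cosmetic: it is what makes the numerical constant go through cleanly. The only missing pieces in your sketch are the explicit evaluation of $c_0$ and the verification that the cross-scale one-dimensional bound is uniform over all admissible gaps $s_j\in c_d\mathbb{Z}_{>0}$ (not just $s_j=c_d$); both follow by routine calculation with the polynomial formula for $\int_0^1\lf_1(u)\lf_3\bigl((u+\ell)/2^{s}\bigr)\,du$ but must be carried out to pin down the stated constant.
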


\begin{proof}[Proof of Lemma \ref{crossprod}]
First, note that
\[
(**) \leq \frac{1}{|\mathcal{M}_m|^2}\sum_{M \neq \tilde M} \sum_I \sum_{\tilde I} \left| \prod_{j=1}^d \int A_{m_j,i_j}A_{\tilde m_j, \tilde i_j}  \right|.
\]
%Now we  prove the following claim regarding \eqref{twostar}:
%({\color{black}\textbf{please consider isolating the proof of the claim
%    \eqref{twostar} in a separate lemma}})
%\begin{align*}
%    |(**)| 
%    %&\leq \frac{1}{m^2}\sum_{M\neq \tilde M} \int  (\sum_I A_{m_1,i_1}A_{m_2,i_2})(\sum_{\tilde I} A_{\tilde m_1,\tilde i_1}A_{\tilde m_2,\tilde i_2}) \\
%    &\leq  \frac{1}{|\mathcal{M}_m|^2}\sum_{M \neq \tilde M} \sum_I \sum_{\tilde I} \left| \prod_{j=1}^d \int A_{m_j,i_j}A_{\tilde m_j, \tilde i_j}  \right|
%    \leq \frac{2}{3} \left(\frac{C}{100}\right)^d \frac{2^{-2m}}{|\mathcal{M}_m|}  .
%\end{align*}

We first consider the case where $m_1 = \tilde m_1+c_d$ and $m_2 =
\tilde m_2-c_d$ where $c_d = 2d$ and $m_j = \tilde m_j$ for $j=3,\ldots,d$. Note that
for any $I_{m_1,i_1}$, there exists only one $\tilde I_{\tilde m_1,
  \tilde i_1}$ which includes $I_{m_1,i_1}$. If these two intervals
are disjoint, then the integral $\int A_{m_1,i_1}A_{\tilde m_1,\tilde
  i_1}$ becomes zero. Also we can check that $i_1 2^{-m_1}
<x<(i_1+1)2^{-m_1}$ is equivalent to $(i_1/2^{c_d})2^{-\tilde m_1} <
x< ((i_1+1)/2^{c_d})2^{-\tilde m_1}$. Thus the corresponding $\tilde
i_1$ can be taken as $\lfloor i_1/2^c_d \rfloor$. Suppose for now that
$i_1$ is divisible by $2^{c_d}$ with a remainder of $\ell$. Then
$\ell$ can take values from $\{0,\ldots, 2^{c_d}-1\}$, which leads to 
\begin{align*}
\int &A_{m_1,i_1}A_{\tilde m_1, \tilde i_1}
 = \int_{I_{m_1,i_1}} A_{m_1,i_1}A_{\tilde m_1, \tilde i_1} \\
 &= \frac{2^{-m_1-\tilde m_1}}{100} \int_{I_{m_1,i_1}} \left(\lf_3(2^{m_1}x_1-i_1)-\lf_1(2^{m_1}x_1-i_1) \right) \times \\
 &\left( \lf_3(2^{\tilde m_1}x_1-\lfloor i_1/2^{c_d} \rfloor)-\lf_1(2^{\tilde m_1}x_1- \lfloor i_1/2^{c_d} \rfloor) \right) \\
 &=\frac{ 2^{-m_1-\tilde m_1}2^{-m_1}}{100}
\left[\int_0^1 (\lf_3(u)-\lf_1(u))\left( \lf_3(\frac{u+\ell}{2^{c_d}})-\lf_1(\frac{u+\ell}{2^{c_d}}) \right) du\right] \\
&= \frac{2^{-2m_1-\tilde m_1}}{100}\left[ \int_0^1 \lf_3(u)\lf_3(\frac{u+\ell}{2^{c_d}}) + \lf_1(u)\lf_1(\frac{u+\ell}{2^{c_d}}) -\lf_1(u)\lf_3(\frac{u+\ell}{2^{c_d}}) \right] \\
&=\frac{2^{-2m_1-\tilde m_1}}{100} \left[ 2^{-3{c_d}}\frac{1}{7} + 2^{-{c_d}}\frac{1}{3}  - \int_0^1 \lf_1(u)\lf_3(\frac{u+\ell}{2^{c_d}}) \right]
\end{align*}
when $i_1$ is divisible by $2^{c_d}$ with a remainder of $\ell$ (with $0\leq \ell \leq 2^{{c_d}}-1$ and $\ell \in \mathbb{N}$).

With some tedious calculations, we can show 
\begin{align}\label{tedious}
\int_0^1 \lf_1(u)\lf_3(\frac{u+\ell}{2^{c_d}}) du = 2^{-3{c_d}}(10\ell^2+10\ell+3) + 2^{-2{c_d}}(-10\ell-5)+2^{-{c_d}}(2) 
\end{align}
Depending on the value of $\ell$, the above expression can take a
negative value. Solving the second order equation of $\ell$,
\eqref{tedious} is minimized at $\ell^* = (2^{c_d}-1)/2$, which gives
the minimum value $-2^{-{c_d}-1}(1-2^{-2{c_d}}) \geq
-2^{-{c_d}-1}$. Similarly, the maximum will be achieved at $\ell=0$ or
$\ell = 2^{{c_d}-1}$, which gives the maximum value $2^{-{c_d}}(2-3
\times 2^{-2{c_d}})(1-2^{-2{c_d}}) \leq 2^{-{c_d}+1}$. This shows 
\begin{align*}
\left|2^{-3{c_d}}\frac{1}{7} + 2^{-j}\frac{1}{3}  - \int_0^1
  \lf_1(u)\lf_3(\frac{u+\ell}{2^{c_d}})\right|  &\leq
                                                  2^{-{c_d}}\frac{5}{3}-5\times
                                                  2^{-2{c_d}}+\frac{20}{7}2^{-3{c_d}}
                                                  \\ &\leq 5 \left(\frac{1}{3}2^{-{c_d}}+\frac{1}{7}2^{-3{c_d}} \right)\leq 2^{-{c_d}} (5C),
\end{align*}
where $C = \frac{10}{21}$ as in \eqref{defc}.

By repeating the similar calculation for the case $m_2 = \tilde
m_2-c_d$, we have when $m_1 = \tilde m_1+c_d$ and $m_2 = \tilde
m_2-c_d$ 
\begin{align*}
& \left| \int A_{m_1,i_1}A_{\tilde m_1, \tilde i_1} \int A_{m_2,i_2}
  A_{\tilde m_2, \tilde i_2} \right| \\ &\leq 2^{-m_1-\tilde
  m_1}2^{-m_1}2^{-m_2-\tilde m_2}2^{-\tilde
  m_2}\Big(\frac{1}{100}\Big)^2 2^{-2{c_d}}(5C)^2. 
\end{align*}
Note that for each interval $I_{m_1,i_1}$, there exists a unique
corresponding interval $\tilde I_{\tilde m_1, \tilde i_1}$ which is
not disjoint with each other and also for each interval $I_{\tilde
  m_2, \tilde i_2}$, there exists a unique corresponding interval
$I_{m_2, i_2}$ which is not disjoint with each other. Thus 
\begin{align*}
& \sum_I \sum_{\tilde I} \left| \prod_{j=1}^d A_{m_j,i_j} A_{\tilde
  m_1, \tilde i_j}\right| \\ &\leq 2^{-m_1-\tilde m_1}2^{-m_2-\tilde
                            m_2}\Big(\frac{5C}{100}\Big)^2 2^{-2{c_d}}
                            2^{\sum_{j=3}^d m_j} 2^{-\sum_{j=3}^d
                            3m_j}\Big(\frac{5C}{100}\Big)^{m-2} \\
&=\Big(\frac{5C}{100}\Big)^d  2^{-2m}2^{-2c_d}. 
\end{align*}

Using the above ideas, let us consider more general case. For $j_1,\ldots, j_{d-1}$ (whose value is among $c_d\{0, \pm 1, \pm 2, \ldots, \pm (k-1)\}$, we consider the case $m_1 = \tilde m_1+j_1, m_2 = \tilde m_2+j_2, \ldots, m_{d-1} = \tilde m_{d-1}+j_{d-1}$ and $m_d = \tilde m_d+j_d$  where  $\sum_\ell j_\ell = 0$ and $m_d = m-\sum_{j=1}^{d-1}m_j$. We suppose $(m_1,\ldots,m_d) \neq (\tilde m_1,\ldots, \tilde m_d)$.
We know that $\sum_{\ell_1}^d |j_\ell|$ is among $\{2c_d,4c_d,\ldots \}$ and let us suppose $\sum_{\ell_1}^d |j_\ell| = 2c_d$ for now. 
There exist at most ${d \choose 2}2 = d(d-1) < 5^d$ possible pair $\tilde M$ for each $M$.
Indeed, we pick $2$ dimensions where we put plus sign on the first dimension (and the minus sign for the other dimension) or vice versa.
For this case, our previous calculations give 
\begin{align*}
    \sum_I \sum_{\tilde I} \left|\prod_{j=1}^d \int A_{m_j, i_j} A_{\tilde m_j, \tilde i_j}\right| &\leq \left( \frac{5C}{100}\right)^d 2^{-2m} 2^{-2c_d}. 
\end{align*}

More generally, when $\sum_{\ell_1}^d |j_\ell| = Jc_d$, there exist at most $\sum_{j=2}^d {d \choose j} (2J)^j \leq (2J+1)^d$ possible pair $\tilde M$ for each $M$ (pick $j$ dimensions, sign choices for each such dimension, and counting of splitting $J$ with $j-1$ pieces).
For this case, 
\begin{align*}
    \sum_I \sum_{\tilde I} \left|\prod_{j=1}^d \int A_{m_j, i_j} A_{\tilde m_j, \tilde i_j}\right| &\leq
    \left( \frac{5C}{100}\right)^d 2^{-2m} 2^{-2Jc_d}. 
\end{align*}

Thus we  bound
\begin{align*}
|(**)| &\leq \frac{1}{|\mathcal{M}_m|} \sum_{J\geq 2} (2J+1)^d \left( \frac{5C}{100}\right)^d  2^{-2m} 2^{-2Jc_d} \\
&\leq \left( \frac{5C}{100}\right)^d \frac{2^{-2m}}{|\mathcal{M}_m|}
  \sum_{J\geq 2} (10J+5)^d 2^{-2J c_d} \\ &\leq \frac{2}{3}\left( \frac{C}{100}\right)^d \frac{2^{-2m}}{|\mathcal{M}_m|} = \frac{2}{3}\left( \frac{1}{210}\right)^d \frac{2^{-2m}}{|\mathcal{M}_m|}
\end{align*}
by the choice of $c_d=2d$.
The claim in Lemma \ref{crossprod} is proved.
\end{proof}
\color{black}

\subsection{Proof of Proposition
  \ref{prodsmugen}}

The main idea is to
construct a decomposition of $[0, M]^d$ into rectangles $\{R_j\}$
which satisfy the conditions of Theorem \ref{jrec}. This will allow us
to deduce Proposition \ref{prodsmugen} as a consequence of Theorem
\ref{jrec}. For the decomposition, we use the following
univariate result. 

\begin{lemma}\label{decomp1d}
  Let $P_0$ be a probability measure on $[0, M]$ having a right
  continuous nonincreasing density $p_0$ on $[0, M]$. Assume that
  $p_0$ is bounded from above on $[0, M]$ by $B = p_0(0) < 
  \infty$. For every 
  $\delta \in (0, 1)$, there exist points $0 = x_0 < x_1 < \dots < x_K
  \leq M$ with 
  \begin{equation}\label{decomp1d.K} 
    K \leq \lceil \log \log \frac{4B}{\delta} \rceil
  \end{equation}
  such that
  \begin{equation}\label{decomp1d.eq1}
    \max_{1 \leq k \leq J} \frac{p_0(x_{k-1})}{\sqrt{p_0(x_k -)}} \leq
    2\sqrt{B}
  \end{equation}
  where $p_0(x_k-)$ above denotes the left limit of $p_0$ at $x_k$,
  and
  \begin{equation}\label{decomp1d.eq2}
   P_0[x_K, M] \leq \delta M. 
  \end{equation}
\end{lemma}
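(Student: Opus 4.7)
The plan is to construct $x_0, x_1, \dots$ greedily by choosing a doubly-exponentially decreasing sequence of level thresholds $c_k$ and setting $x_k$ to be the rightmost point where the left limit of $p_0$ still exceeds $c_k$. Specifically, take $c_0 = B$ and $c_k = c_{k-1}^2/(4B)$ for $k \geq 1$; a straightforward induction gives the closed form $c_k = B \cdot 2^{2 - 2^{k+1}}$, which is already doubly exponentially small in $k$. With $x_0 := 0$, I will define for each $k \geq 1$
\[
x_k := \sup\{x \in [0, M] : p_0(x-) \geq c_k\},
\]
using the convention $p_0(0-) := B$.

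The two key properties to verify at each step are $p_0(x_k-) \geq c_k$ and $p_0(x_k) \leq c_k$. The first is immediate from the fact that $y \mapsto p_0(y-)$ is left-continuous (since $p_0$ is nonincreasing), so the set $\{y : p_0(y-) \geq c_k\}$ is closed under left limits and the supremum $x_k$ satisfies $p_0(x_k-) \geq c_k$. The second follows because for every $y > x_k$ the definition of $x_k$ forces $p_0(y-) < c_k$ and hence $p_0(y) \leq p_0(y-) < c_k$; letting $y \downarrow x_k$ and using right-continuity of $p_0$ gives $p_0(x_k) \leq c_k$.

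Once these two properties are in hand, the ratio inequality \eqref{decomp1d.eq1} is essentially automatic: for each $k \geq 1$, since $p_0(x_{k-1}) \leq c_{k-1}$ (using $p_0(x_0) = B = c_0$ at the base case) and $p_0(x_k-) \geq c_k$,
\[
\frac{p_0(x_{k-1})}{\sqrt{p_0(x_k-)}} \leq \frac{c_{k-1}}{\sqrt{c_k}} = \frac{c_{k-1}}{\sqrt{c_{k-1}^2/(4B)}} = 2\sqrt{B},
\]
which is exactly the required bound. I stop at the first index $K$ for which $c_K \leq \delta$; at that point monotonicity of $p_0$ and the bound $p_0(x_K) \leq c_K$ yield
\[
P_0[x_K, M] \leq (M - x_K)\, p_0(x_K) \leq M c_K \leq \delta M,
\]
giving \eqref{decomp1d.eq2}. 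Solving $c_K = B \cdot 2^{2-2^{K+1}} \leq \delta$ requires $2^{K+1} \geq \log_2(4B/\delta)$, so the smallest such $K$ satisfies $K \leq \lceil \log\log(4B/\delta)\rceil$, which is \eqref{decomp1d.K}.

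The only real subtlety is the careful handling of one-sided limits of the right-continuous nonincreasing function $p_0$, so that the supremum definition of $x_k$ simultaneously pins down the left limit from below (yielding $p_0(x_k-) \geq c_k$) and the right limit from above (yielding $p_0(x_k) \leq c_k$). This second inequality is what lets the iteration continue to drive $p_0(x_k)$ down at the full doubly-exponential rate rather than stalling on plateaus of $p_0$, and so is the crux of the double-logarithmic bound on $K$.
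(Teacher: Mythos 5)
Your construction is essentially the same idea as the paper's — drive the density values at the chosen points down at a doubly-exponential rate — but implemented with a \emph{fixed} geometric ladder of thresholds $c_k = c_{k-1}^2/(4B)$, whereas the paper uses an \emph{adaptive} threshold: $x_k$ is the supremum of $u \in [x_{k-1},M]$ with $p_0(x_{k-1})/\sqrt{p_0(u)} \le 2\sqrt{B}$, i.e.\ the cutoff at step $k$ is $p_0(x_{k-1})^2/(4B)$ rather than your $c_k = c_{k-1}^2/(4B)$. Both yield the recursion $p_0(x_k) \le p_0(x_{k-1})^2/(4B)$ (hence $p_0(x_k) \le c_k$) and the same $\lceil \log\log(4B/\delta)\rceil$ bound, so the two routes are genuinely close. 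The one thing the adaptive version buys you — and the one thing your writeup does not address — is the \emph{strict} monotonicity $x_{k-1} < x_k$ that the lemma asserts. In the paper's construction, if $p_0(x_{k-1}) > \delta$ and $x_{k-1} < M$ then right-continuity gives $p_0(u) \to p_0(x_{k-1}) > p_0(x_{k-1})^2/(4B)$ as $u \downarrow x_{k-1}$, so points just past $x_{k-1}$ are still in the defining set and $x_k > x_{k-1}$. Your fixed-threshold construction can stall: if $p_0$ has a large jump at $x_{k-1}$ (say $p_0(x_{k-1}-) \ge c_{k-1}$ but $p_0(x_{k-1}) < c_{k+3}$), then $x_{k-1} = x_k = x_{k+1} = \dots$ until $c_k$ catches up, so the produced sequence is only weakly increasing.

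This is a real but easily repaired gap. If you delete repeated values, the ratio bound survives, but you must be careful \emph{which} $c$-index you attach to a repeated point: if $x_{k} = x_{k+1} = \dots = x_{k+m-1} < x_{k+m}$, you should use $p_0(x_{k+m-1}) \le c_{k+m-1}$ (the last, hence smallest, threshold in the run), and combine it with $p_0(x_{k+m}-) \ge c_{k+m}$ from your Claim 1. Since $c_{k+m} = c_{k+m-1}^2/(4B)$, the ratio $p_0(x_{k+m-1})/\sqrt{p_0(x_{k+m}-)} \le c_{k+m-1}/\sqrt{c_{k+m}} = 2\sqrt{B}$ still holds across the deletion. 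Alternatively, simply switch to the paper's adaptive supremum so that strictness is automatic. Either way, add a sentence verifying $x_{k-1} < x_k$ (or explaining how duplicates are handled); as written, that claim of the lemma is not established.
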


\begin{proof}[Proof of Lemma \ref{decomp1d}]
  We take $x_0 = 0$ and define
  \begin{equation*}
    x_{k} = \sup \left\{u \in [x_{k-1}, M] :
      \frac{p_0(x_{k-1})}{\sqrt{p_0(u)}} \leq 2 \sqrt{B} \right\}
  \end{equation*}
  for $k = 1, 2, \dots, J$ where $K$ is the smallest integer for which
  either $x_K = M$ or $p_0(x_K) \leq \delta$. This immediately ensures
  that $P_0[x_K, M] \leq \delta M$ (this is obvious if $x_K = M$ as
  then $[x_K, M]$ will be the singleton $\{M\}$ which has zero $P_0$
  measure; if $x_K < M$, then $P_0[x_k, M] \leq p_0(x_K) (M - x_K)
  \leq \delta M$). 

  The inequality inside
  the supremum above will hold for points slightly smaller than $x_k$,
  and thus by taking the left limit, we obtain
  \eqref{decomp1d.eq1}. As long as $x_k < M$, the inequality in the
  supremum of the 
  definition of $x_k$ will be violated for points slightly larger than
  $x_k$. Thus by taking the right limit and using the assumed
  right-continuity of $p_0$, we get
  \begin{equation*}
    p_0(x_k) \leq \frac{p_0^2(x_{k-1})}{4B} \qt{provided $x_k < M$}. 
  \end{equation*}
  Using this recursively for $k \geq 1$ along with $p_0(x_0) = B$, we
  obtain 
  \begin{equation*}
    p_0(x_k) \leq \frac{4B}{4^{2^k}} \qt{provided $x_k < M$}. 
  \end{equation*}
  One can check that for $\frac{4B}{4^{2^k}} \leq \delta$  when  $k$
  equals the right hand side of \eqref{decomp1d.K}. This completes the
  proof of Lemma \ref{decomp1d}.   
\end{proof}

We are now ready to prove Proposition \ref{prodsmugen}.

\begin{proof}[Proof of Proposition \ref{prodsmugen}]
  We use Lemma \ref{decomp1d} with $\delta := n^{-2/3}/(AMd)$ for each
  univariate density $p_{0j}, 1 \leq j \leq d$. For each $p_{0j}$,
  this gives points $x_{0, j} = 0 < 
  x_{1, j} < \dots < x_{K_j, j} \leq M$ satisfying the conditions of Lemma
  \ref{decomp1d} for $p_{0j}$. We decompose $[0, M]^d$ (which is the
  full domain of $p_0$) into rectangles
  \begin{equation*}
    R(k_1, \dots, k_d) := \prod_{j=1}^d [x_{k_j, j}, x_{k_j + 1, j}]. 
  \end{equation*}
  as each $k_j$ ranges in $0, 1, \dots, K_j - 1$. These rectangles
  clearly have disjoint interior. They do not cover the whole of $[0,
  M]^d$ though because $K_j$ can be strictly smaller than $M$. But
  their union has probability
  \begin{align*}
    & P_0 \left( \cup \left\{R(k_1, \dots, k_d) : 0 \leq k_j < K_j, 1
    \leq j \leq d \right\} \right) \\ &= P_0 \left( \prod_{j=1}^d [0,
                                     x_{K_j}] \right) \geq 1 - A
                                        \sum_{j=1}^d P_{0j}[x_{K_j,
                                        j}, M] \geq 1 - A \delta M 
      d = 1 - n^{-2/3} 
  \end{align*}
  where $P_{0j}$ is the probability measure having density $p_{0j}$.
  For a fixed $\qh \in (1, \infty)$, we now bound $W(R, p_0, \qh)$ for
  each rectangle $R = R(k_1, \dots, k_d)$ in order to apply Theorem
  \ref{jrec}. Observe first that
  \begin{align*}
    \|p_0^{-1}\|_{L_{\qh}(R)}^{\qh} &= \int_R p_0^{-\qh} \leq a^{-\qh}
                                      \int_R p_{01}^{-\qh} \dots
                                      p_{0d}^{\qh} = a^{-\qh}
                                      \prod_{j=1}^d \int_{x_{k_j,
                                      j}}^{x_{k_j+1, j}}
                                      p_{0j}^{-\qh}. 
  \end{align*}
  Because $p_{0j}$ is a nonincreasing density, we can write $p_{0j}(x)
  \geq p_0(x_{k_j + 1, j}-)$ for $x$ in the interior of $[x_{k_j, j},
  x_{k_j + 1, j}]$. We thus get
  \begin{align*}
    \|p_0^{-1}\|_{L_{\qh}(R)}^{1/4} \leq |R|^{1/(4 \qh)} a^{-1/4}
    \prod_{j=1}^d \left(p_{0j}(x_{k_j + 1, j} - ) \right)^{-1/4}. 
  \end{align*}
  Also
  \begin{align*}
    \max_{x \in R} p_0(x) \leq A \max_{x \in R} p_{01}(x_1) \dots
    p_{0d}(x_d) \leq A \prod_{j=1}^d p_{0j}(x_{k_j, j}). 
  \end{align*}
  As a result
  \begin{align*}
    |R|^{1/(4\ph)}     \|p_0^{-1}\|_{L_{\qh}(R)}^{1/4} \sqrt{
    \max_{x \in R} p_0(x) } &\leq |R|^{1/4} a^{-1/4} \sqrt{A}
                              \prod_{j=1}^d \left(\frac{p_{0j}(x_{k_j,
                              j})}{\sqrt{p_{0j}(x_{k_j + 1, j} -)}} \right)^{1/2}
  \end{align*}
  Using $|R| \leq M^d$ and then Lemma \ref{decomp1d} to control the
  terms in the product above, we obtain
  \begin{align*}
    |R|^{1/(4\ph)}     \|p_0^{-1}\|_{L_{\qh}(R)}^{1/4} \sqrt{
    \max_{x \in R} p_0(x) } \leq M^{1/4} a^{-1/4} \left(\sqrt{2}
    B^{1/4} \right)^d. 
  \end{align*}
  Thus $W(R, p_0, \qh) \leq C_{a, M, B}$. The number of rectangles
  here is
  \begin{align*}
    \prod_{j = 1}^dK_j \leq \lceil \log \log \frac{4B}{\delta}
    \rceil^d = \lceil \log \log \left(4 B A M d n^{2/3} \right) 
    \rceil^d \leq C_{A, B, M, d} \lceil \log \log n \rceil. 
  \end{align*}
  Proposition \ref{prodsmugen} now follows from Theorem \ref{jrec}
  (note that $\lceil \log \log n \rceil \leq C \log \log n$ for $n
  \geq 3$). 
\end{proof}

\subsection{Proofs of Proposition \ref{1dimproved} and Proposition
  \ref{1d_vandegeer}}\label{proofs_1d}

For the proof of Proposition \ref{1dimproved}, we use the
following lemma which can be seen as a univariate analogue of
Proposition \ref{mainhrbound}. It only applies to $d = 1$ but gives a
better bound (in terms of logarithmic factors) compared to Proposition
\ref{mainhrbound}. 

\begin{lemma}\label{hrb_1d}
  Fix $d = 1$, $n \geq 2$ and let $R = [a, b]$ be contained in the
  support of $p_0$. Then there exists a positive constant $C$ such
  that $H(t, R)$ (for the definition, see \eqref{secupbo}) 
  satisfies the following bound for every $t > 0$: 
\begin{align}\label{hrb_1d.eq}
  H(t, R) \leq C \gamma^{1/3} b^{1/6} t n^{-1/3} + C
  \sqrt{\gamma} b^{1/4} 
  \sqrt{\frac{t}{n}} +  \frac{C
  \gamma}{nt} \sqrt{b} + \frac{C b^{1/3} \gamma^{2/3}}{n^{2/3}}
\end{align}
where $\gamma := p_0(a)/\sqrt{p_0(b-)}$ with $p_0(b-)$ denoting the left
limit of $p_0$ at $b$.  
\end{lemma}

\begin{proof}[Proof of Lemma \ref{hrb_1d}]
  For every $a < y < b$ (where $R = [a, b]$), we have
  \begin{equation*}
    H(t, [a, b]) \leq H(t, [a, y]) + H(t, [y, b]). 
  \end{equation*}
  For $H(t, [a, y])$, we use the trivial bound \eqref{trivialb} to get
  \begin{equation*}
    H(t, [a, y]) \leq 2 p_0(a) (y - a). 
  \end{equation*}
  For $H(t, [y, b])$, we use Lemma \ref{ESupResult} with $\alpha = 0$ and
  \begin{equation*}
    \|p_0^{-1}\|_{L_{\mathfrak{q}([y, b])}} = \left(\int_y^b
      p_0^{-\mathfrak{q}} \right)^{1/\mathfrak{q}} \leq \frac{(b -
    y)^{1/\mathfrak{q}}}{p_0(y-)}. 
\end{equation*}
This gives
\begin{align*}
  H(t, [y, b]) &\leq C \sqrt{\frac{t}{n}} \sqrt{\beta}
  \left(\frac{b - y}{p_0(y-)} \right)^{1/4} + \frac{C \beta}{nt}
                 \left(\frac{b - y}{p_0(y-)} \right)^{1/2} \\
  &\leq C \sqrt{\frac{t}{n}} \sqrt{\beta}
  \left(\frac{b}{p_0(b-)} \right)^{1/4} + \frac{C \beta}{nt}
                 \left(\frac{b}{p_0(b-)} \right)^{1/2}. 
\end{align*}
For $\beta$, we use Lemma \ref{bds} to get
\begin{equation*}
 \sqrt{\beta} \leq \sqrt{p_0(a)} + \frac{t}{\sqrt{y-a}}. 
\end{equation*}
Putting these inequalities together, we obtain the following upper bound on $H(t, [a, b])$:
\begin{equation}\label{bound_y}
  \begin{split}
&  2 p_0(a) (y - a) + C \sqrt{\frac{t}{n}} \left( \sqrt{p_0(a)} +
  \frac{t}{\sqrt{y-a}}\right)   \left(\frac{b}{p_0(b-)}
  \right)^{1/4} \\ &+ \frac{C}{nt} \left(p_0(a) + \frac{t^2}{y-a}
                     \right)   \left(\frac{b}{p_0(b-)} \right)^{1/2}.
  \end{split}                     
\end{equation}
This bound is true for every $a < y < b$. We now make the choice
\begin{align*}
  y = a + \frac{t}{n^{1/3}} \left(\frac{b}{p_0(b-)} \right)^{1/6}
  \frac{1}{(p_0(a))^{2/3}}. 
\end{align*}
This will be a valid choice for $y$ if  $a < y < b$. If $y > b$, then
\eqref{bound_y} will exceed $2 p_0(a) (b-a)$ which is clearly larger
than $H(t, R)$ (see \eqref{trivialb}). We can therefore plug in this
value of $y$ in \eqref{bound_y} and it is trivial to verify that
\eqref{bound_y}  then leads to \eqref{hrb_1d.eq}. 
\end{proof}

\begin{proof}[Proof of Proposition \ref{1dimproved}]
  We use Lemma \ref{decomp1d} with $\delta = n^{-2/3}/M$ to obtain
  points $x_0 = 0 < x_1 < \dots < x_K \leq M$ satisfying the
  conditions of Lemma \ref{decomp1d} for $p_0$. Then using
  \eqref{altexpEGt}, \eqref{eledeco} and \eqref{trivialb}, we can
  write 
\begin{equation*}
  \E G(t) \leq 2 \sum_{i=1}^K H(t, [x_{i-1}, x_i]) + 4 P_0 [x_k, M]. 
\end{equation*}
Using the bound given in Lemma \ref{hrb_1d} for $H(t, [x_{i-1},
x_i])$ and the bound given in Lemma \ref{decomp1d} for $P_0[x_k, M]$,
we obtain 
\begin{align}\label{1d_bo}
  \E G(t) \leq 2 K C_{B, M} \left(t n^{-1/3} + \sqrt{\frac{t}{n}} +
  \frac{1}{nt} + \frac{1}{n^{2/3}} \right) + \delta M. 
\end{align}
Note that we have replaced $\gamma$ appearing in \eqref{hrb_1d.eq} by
$2 \sqrt{B}$ and $b$ appearing in \eqref{hrb_1d.eq} by $M$ (these
constants are absorbed in $C_{B, M}$). Theorem \ref{hellexp} now
completes the proof (this argument is similar to the one used in
Theorem \ref{jrec}). The $\log \log n$ factor comes from the presence
of $K$ on the right hand side of \eqref{1d_bo}. 
\end{proof}

\begin{proof}[Proof of Proposition \ref{1d_vandegeer}]
  We use Theorem \ref{hellexp} so the goal is to bound $\E G(t)$ from above. Using \eqref{altexpEGt} and \eqref{temp.jb}, we get
  \begin{align*}
      \mathbb{E} G(t) &= 2\E \sup_{p \in \smud : h(p_0, p) \leq t} \int 
                \frac{4p_0}{p_0 + p} d(P_0 - P_n) \\
                &= 2\E \sup_{p \in \smud : h(p_0, p) \leq t} \int 
                \left[\frac{p_0 - p}{p_0 + p} \right] d(P_0 - P_n) \\
                &\leq \frac{C}{\sqrt{n}} J(t \sqrt{2}) + \frac{C}{nt^2} J^2(t \sqrt{2}) 
  \end{align*}
  where $J(\delta)$ is as defined in \eqref{braentint} with
  \begin{align*}
      \mathcal{F} := \left\{\frac{p_0 - p}{p_0 + p} : p \in \smud, h(p_0, p) \leq t \right\}
  \end{align*}
  Let $\mathcal{F}_{\downarrow}$ denote the class of all non-increasing functions $f$ on $[0, M]$ (note that $[0, M]$ is the support of $p_0$) for which $0 \leq f(x) \leq \sqrt{B}$ for all $x \in [0, M]$. The main claim is that
  \begin{align}\label{vandy_ent}
      N_{[]}(\epsilon, \mathcal{F}, L_2(P_0)) \leq N_{[]}(\epsilon/4,
    \mathcal{F}_{\downarrow}, L_2([0, M]))
      \end{align}
  To see \eqref{vandy_ent}, fix $p \in \smud$ with $h(p_0, p) \leq t$. As explained in the text immediately after Proposition \ref{1dimproved}, the function $\sqrt{p p_0/(p + p_0)}$ is non-increasing on $[0, M]$, and clearly 
  \begin{align*}
      \sqrt{\frac{p(x) p_0(x)}{p(x) + p_0(x)}} \leq \sqrt{p_0(x)} \leq \sqrt{B}
  \end{align*}
  for $0 \leq x \leq M$. Therefore $\sqrt{p p_0/(p + p_0)} \in \mathcal{F}_{\downarrow}$. Further it is easy to check that if $f_L, f_U$ are two functions such that $f_U \leq \sqrt{p_0}$ (note $\sqrt{p p_0/(p + p_0)} \leq \sqrt{p_0}$), then  
  \begin{align*}
      f_L \leq \sqrt{\frac{p p_0}{p + p_0}} \leq f_U \implies 1 - 2 \frac{f_U^2}{p_0} \leq \frac{p_0 - p}{p_0 + p} \leq 1 - 2\frac{f_L^2}{p_0}
  \end{align*}
  and also
  \begin{align*}
      \int \left(\left(1 - 2 \frac{f_L^2}{p_0} \right) - \left(1 - 2
    \frac{f_U^2}{p_0} \right) \right)^2 p_0 &= 4 \int \frac{(f_U -
                                              f_L)^2 (f_U +
                                              f_L)^2}{p_0} \\ &\leq 16
                                                                \int_0^M
                                                                (f_U -
                                                                f_L)^2
                                                                ,
  \end{align*}
  where, in the last inequality, we used $f_L, f_U \leq
  \sqrt{p_0}$. This proves \eqref{vandy_ent} which, together with a
  standard result on the bracketing 
  numbers of  non-increasing functions (see e.g., \cite[Theorem
  2.7.5]{vaartwellner96book} applied with $Q$ being the uniform
  measure on $[0, M]$), allows us to deduce
  \begin{align*}
      \log N_{[]}(\epsilon, \mathcal{F}, L_2(P_0)) \leq \frac{C_{B,
    M}}{\epsilon} \qt{for all $\epsilon > 0$}. 
  \end{align*}
  From here, it immediately follows that $J(\delta) \leq C_{B, M}
  \sqrt{\delta}$ so that
  \begin{equation*}
    \E G(t) \leq C_{B, M} \left( \sqrt{\frac{t}{n}} + \frac{1}{nt} \right)
  \end{equation*}
  and then \eqref{1dimproved_eq} directly follows from Theorem
  \ref{hellexp}. 
\end{proof}

\section{Additional technical results and proofs}\label{additechres}

In this section, we provide the proofs of Lemmas \ref{trivial},
\ref{bds} and \ref{brackd}. Then we provide proofs for the claim in Remark \ref{domain}. We also state and prove
two technical results: Lemma \ref{intchan} and Lemma
\ref{simpleintegral} which were 
used in the proofs of Theorem \ref{adaptation} and Lemma
\ref{ESupResult} respectively.

\begin{proof}[Proof of Lemma \ref{trivial}]
First note that $\frac{p_0 - p}{p_0 + p}$  is decreasing in $p$ (for
fixed $p_0$)  so that
\begin{equation}\label{sieq}
  \frac{p_0 - p_U}{p_0 + p_U} \leq  \frac{p_0 - p}{p_0 + p} \leq
  \frac{p_0 - p_L}{p_0 + p_L} 
\end{equation}
whenever $p_L \leq p \leq p_U$. Combining this with
\begin{align*}
\int_{R} \left( \frac{p_0-p_L}{p_0+p_L}-\frac{p_0-p_U}{p_0+p_U}
  \right)^2 p_0 &= 4\int_{R} \frac{p_0^3
                  (p_U-p_L)^2}{(p_0+p_L)^2(p_0+p_U)^2} 
  \\
  &\leq 4\int_{R} \frac{(p_U-p_L)^2}{p_0}  \\  &\leq 4 \left\{\int_R \left(p_U -
      p_L \right)^{2\ph} \right\}^{1/\ph}  \left\{\int_R
    \frac{1}{p_0^{\qh}} \right\}^{1/\qh} \\ &= 4 \left\{\int_R \left(p_U -
      p_L \right)^{2\ph} \right\}^{1/\ph} \|p_0^{-1}\|_{L_{\qh}(R)} 
\end{align*}
the proof of \eqref{brack.lqbound} is completed. 
%(and \eqref{brack.l2minbound} is just the special case of
%\eqref{brack.lqbound} for $\qh = \infty$). 
\end{proof} 

\begin{proof}[Proof of Lemma \ref{bds}]
Let $\alpha \leq x$ (that is, $\alpha_1 \leq x_1, \ldots, \alpha_d \leq x_d$). Without loss of generality, we assume $p(x) \geq p_0(\alpha)$ since otherwise there is nothing to prove.
\begin{align*}
    \delta^2 \geq \int_{R} (\sqrt{p}-\sqrt{p_0})^2 & \geq \int_\alpha^x (\sqrt{p}-\sqrt{p_0})^2 \\
    &\geq (x_1-\alpha_1) \ldots (x_d-\alpha_d) (\sqrt{p(x)}-\sqrt{p_0(\alpha)})^2
\end{align*}
where the last inequality follows since $p$ and $p_0$ are coordinatewise non-increasing densities.
This gives 
\[
p(x) \leq \left(\sqrt{p_0(\alpha)} + \frac{\delta}{\sqrt{(x_1-\alpha_1) \ldots (x_d-\alpha_d)}}\right)^2
\]
and we can take the infimum over $0\leq \alpha \leq x$ since this relation holds for any such $\alpha$.
For the second bound, we assume $p(x) \leq p_0(\beta)$ and note that 
\begin{align*}
    \delta^2 \geq \int_{R} (\sqrt{p}-\sqrt{p_0})^2 &\geq \int_{x}^\beta (\sqrt{p_0}-\sqrt{p})^2 \\
    &\geq (\beta_1-x_1) \ldots(\beta_d-x_d) (\sqrt{p_0(\beta)}-\sqrt{p(x)})^2.
\end{align*}
This gives 
  \[
  p(x) \geq \left(\sqrt{p_0(\beta)} - \frac{\delta}{\sqrt{(\beta_1-x_1) \ldots (\beta_d-x_d)}} \right)_+^2.
  \]
  This relation holds for any such $\beta$, thus we take the supremum over $\beta \geq x$. The proof is complete.
\end{proof}

\begin{proof}[Proof of Lemma \ref{brackd}]
Fix $p \in \smud$. By \eqref{altrep}, we can write
  \begin{equation*}
    p(x_1, \dots, x_d) := \tilde G\left([x_1, \infty) \times \dots
    \times [x_d, \infty) \right)
\end{equation*}
for some measure $\tilde G$ on $[0, \infty)^d$. We first claim that there
exists a measure $G'$ supported on $R := [a_1, b_1] \times \dots
\times [a_d, b_d]$ such that, for every $x \in R$,   
\begin{align}
p(x_1, \ldots, x_d) &=G'\left([x_1, \infty) \times \dots
    \times [x_d, \infty) \right)  = G'\left([x_1, b_1] \times \dots
    \times [x_d, b_d] \right). \label{c1}
\end{align}
To prove \eqref{c1}, just take $G^{(1)}$ to be the restriction of $\tilde G$ to the
set $[a_1, \infty) \times \dots \times [a_d, \infty)$ and then define
$G'$ as the image measure of $G^{(1)}$ under the transformation
\begin{equation*}
  (u_1, \dots, u_d) \mapsto \left(\min(u_1, b_1), \dots, \min(u_d, b_d)
  \right). 
\end{equation*}
This proves the first equality in \eqref{c1}. The second equality
simply follows from the fact that $G'$ is supported on $R$.

Now let $\mu$ be the measure defined by
\begin{equation*}
  \mu(A) := \frac{1}{\beta-\alpha} \left( G' \left\{u : \left(\frac{b_1 - u_1}{b_1 - a_1}, \dots,
    \frac{b_d - u_d}{b_d - a_d} \right)  \in A \right\} -\alpha \right). 
\end{equation*}
As $G'$ is supported on $R$, it is clear then that $\mu$ is supported
on $[0, 1]^d$. Further
\begin{align*}
  \mu([0, 1]^d) &= \frac{1}{\beta-\alpha} \left(G'(R)-\alpha \right) \\ &= \frac{1}{\beta-\alpha} \left( G' \left([a_1, b_1]
  \times \dots \times [a_d, b_d] \right)-\alpha \right) \\ & = \frac{1}{\beta-\alpha} \left(p(a_1,
\dots, a_d)-\alpha \right) \leq \frac{\sup_{x \in R} p(x)-\alpha}{\beta-\alpha}. 
\end{align*}
Thus $\mu$ is a subprobability measure on $[0, 1]^d$ (subprobability
measure means $\mu[0, 1]^d \leq 1$) when $p$ lies in
the set $\left\{p \in \smud : \sup_{x \in R} p(x) \leq
  \beta\right\}$. Further the distribution function of $\mu$:  
\begin{equation*}
  F_{\mu}(x) := \mu \left([0, x_1] \times \dots \times [0, x_d]
  \right) 
\end{equation*}
is related to $p$ via
\begin{equation*}
  p(x_1, \dots, x_d)-\alpha = (\beta-\alpha) F_{\mu} \left(\frac{b_1 - x_1}{b_1 -
      a_1}, \dots, \frac{b_d - x_d}{b_d - a_d} \right). 
\end{equation*}
Now to prove \eqref{L2ent.eq}, note that if $F_L$ and $F_U$ are
functions on $[0, 1]^d$ such that $F_L \leq F_{\mu} \leq F_U$ and such
that 
\begin{equation*}
  \int_{[0, 1]^d} \left|{F_U} - {F_L} \right|^r \leq \eta^r, 
\end{equation*}
then
\begin{equation*}
  \begin{split}
p_L(x_1, \dots, x_d) &:=\alpha+  (\beta-\alpha) F_{L} \left(\frac{b_1 - x_1}{b_1 -
      a_1}, \dots, \frac{b_d - x_d}{b_d - a_d} \right) \\ &\leq p(x_1,
  \dots, x_d) \\ &\leq p_{U}(x_1, \dots, x_d) := \alpha+ (\beta-\alpha) F_{U} \left(\frac{b_1 - x_1}{b_1 -
      a_1}, \dots, \frac{b_d - x_d}{b_d - a_d} \right)
  \end{split}
\end{equation*}
and
\begin{equation*}
  \int_R \left|{p_U} - {p_L} \right|^r = (\beta-\alpha)^r |R| \int_{[0,
    1]^d} \left|F_U-F_L \right|^r \leq (\beta-\alpha)^r |R|
  \eta^r. 
\end{equation*}
This implies that
\begin{equation*}
  N_{[]}(\epsilon, \F(R, \alpha, \beta), L_r(R)) \leq N_{[]}(\eta,
  \mathcal{A}_d, L_r([0, 1]^d) ) \qt{for $\epsilon = (\beta - \alpha)
    \eta |R|^{1/r}$}, 
\end{equation*}
and inequality \eqref{L2ent.eq} then follows from Theorem \ref{gaoL2}. 
\end{proof}

The following result was used in the proof of Theorem \ref{adaptation}.

\begin{lemma}\label{intchan}
  For every positive $a$, we have
  \begin{align*}
  \E \max_{\ell \in \lin} \sup_{p \in \Ps(\ell)} \int \frac{p_0 - p}{p_0 + p}
         d(P_0 - P_n) &\leq \left(1 + a \right)     \max_{\ell \in \lin}  \E \sup_{p \in \Ps(\ell)} \int \frac{p_0 - p}{p_0 + p}
         d(P_0 - P_n) \\ & + 4 t \sqrt{\frac{\log(e |\lin|)}{n}} + \left(\frac{2}{a}+\frac{1}{3} \right)
          \frac{4\log(e |\lin|)}{n}. 
       \end{align*}
\end{lemma}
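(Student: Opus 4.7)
The plan is to apply Bousquet's concentration inequality separately to each supremum $Z_\ell := \sup_{p \in \Ps(\ell)} \int \frac{p_0-p}{p_0+p} d(P_0 - P_n)$, use a union bound over the finite set $\lin$, and then integrate the resulting tail estimate to pass from $\E \max_\ell Z_\ell$ to $\max_\ell \E Z_\ell$. This is a standard conversion-from-max-to-expectation argument whose only subtlety lies in arranging the numerical constants.

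For each $\ell \in \lin$, the functions $f_p := (p_0 - p)/(p_0 + p)$ over $p \in \Ps(\ell)$ are uniformly bounded by $1$ and, exactly as in the proof of Theorem~\ref{hellexp}, satisfy $\E f_p^2(X_1) \leq 2 h^2(p, p_0) \leq 2 t^2$. Bousquet's inequality (in the same form used in the derivation of \eqref{bouscon}) then yields, for every $u > 0$,
\[
\P\!\left(Z_\ell - \E Z_\ell \geq \sqrt{\frac{2u (2t^2 + 2 \E Z_\ell)}{n}} + \frac{u}{3n}\right) \leq e^{-u}.
\]
A union bound over $\ell \in \lin$, replacing $u$ by $u + \log|\lin|$, gives the analogous bound for $\max_\ell (Z_\ell - \E Z_\ell)$ with $\E Z_\ell$ replaced by $M := \max_{\ell \in \lin} \E Z_\ell$ on the right-hand side.

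The next step is to split the cross term $\sqrt{4 M u /n}$ via the AM--GM inequality $\sqrt{xy}\leq a x/2 + y/(2a)$, which produces the factor $(1+a) M$ on one side and an additional $\tfrac{2}{a} u/n$ on the other; the remaining $\sqrt{4 t^2 u / n}$ term stays as is, and is the source of the $t\sqrt{\log(e|\lin|)/n}$ summand after integration. Writing $\tilde u := u + \log|\lin|$, this produces an inequality of the form
\[
\P\!\left(\max_\ell Z_\ell \geq (1+a) M + 2 t \sqrt{\frac{\tilde u}{n}} + \Bigl(\frac{2}{a} + \frac{1}{3}\Bigr)\frac{\tilde u}{n}\right) \leq e^{-u}.
\]
Integrating this tail bound in $u$ from $0$ to $\infty$ (using $\int_0^\infty \sqrt{u + \log|\lin|}\, e^{-u} du \leq \sqrt{\log(e|\lin|)} \cdot \sqrt{\pi}/2 + \sqrt{\log|\lin|}$ and $\int_0^\infty (u + \log|\lin|) e^{-u} du = 1 + \log|\lin| = \log(e|\lin|)$, up to the usual constants) yields the stated bound after absorbing numerical factors into the displayed constants $4$ and $4$.

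The main (minor) obstacle is purely bookkeeping: making sure that after the $\sqrt{x+y} \leq \sqrt{x} + \sqrt{y}$ split, the AM--GM trade-off with parameter $a$, and the integration of $e^{-u}$, the final constants match exactly $4 t \sqrt{\log(e|\lin|)/n}$ and $(2/a + 1/3)\cdot 4 \log(e|\lin|)/n$ rather than some other admissible universal constants; nothing deep is required beyond this arithmetic.
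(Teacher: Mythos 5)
Your proposal is correct and follows essentially the same route as the paper: apply Bousquet's inequality to each $Z_\ell = \sup_{p \in \Ps(\ell)} \int \frac{p_0-p}{p_0+p}\,d(P_0-P_n)$, union-bound over the finite set $\lin$, trade the $\E Z_\ell$ dependence of the variance proxy for a multiplicative factor of order $(1+a)$, and integrate the tail. The only cosmetic difference is that you use the deviation form of Bousquet together with $\sqrt{x+y}\leq \sqrt{x}+\sqrt{y}$ and AM--GM, then integrate $u\mapsto e^{-u}s(u)$ directly, whereas the paper keeps the exponential form, introduces an auxiliary parameter $b$ in the trade-off, shows the tail decays at least exponentially for $u$ past a threshold, and then optimizes $b$ at the very end (setting $b=\tfrac{2}{t}\sqrt{\log(e|\lin|)/n}$); your version is arguably a bit more streamlined since the $t\sqrt{\log(e|\lin|)/n}$ term simply falls out of $\int_0^\infty e^{-u}\sqrt{u+\log|\lin|}\,du\leq\sqrt{\log(e|\lin|)}$ without the extra optimization. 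Two tiny bookkeeping points you should tidy in a final write-up: your stated AM--GM $\sqrt{xy}\leq ax/2+y/(2a)$ applied to $\sqrt{4M\tilde u/n}$ gives $(1+a/2)M$ rather than $(1+a)M$, so use $2\sqrt{M\tilde u/n}\leq aM+\tilde u/(an)$ instead to land on the stated factor; and the integral $\int_0^\infty\sqrt{u+\log|\lin|}\,e^{-u}\,du$ is most cleanly bounded by $\sqrt{\log(e|\lin|)}$ via Cauchy--Schwarz, which in fact yields a constant smaller than the stated $4$, so the lemma's inequality holds with room to spare.
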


\begin{proof}[Proof of Lemma \ref{intchan}]
  For every $u \geq 0$, by the union bound
  \begin{align*}
&    \P \left\{ \max_{\ell \in \lin} \sup_{p \in \Ps(\ell)} \int \frac{p_0 - p}{p_0 + p}
         d(P_0 - P_n) \geq \max_{\ell \in \lin} \E \sup_{p \in \Ps(\ell)} \int \frac{p_0 - p}{p_0 + p}
                   d(P_0 - P_n)  + u \right\} \\
    &\leq \sum_{\ell \in \lin} \P \left\{ \sup_{p \in \Ps(\ell)} \int \frac{p_0 - p}{p_0 + p}
         d(P_0 - P_n) \geq \E \sup_{p \in \Ps(\ell)} \int \frac{p_0 - p}{p_0 + p}
                   d(P_0 - P_n)  + u \right\}. 
  \end{align*}
Now (similarly as in \eqref{bouscon}), we use Bousquet's concentration
inequality for the suprema of 
empirical processes in 
the form stated in \cite[Theorem 12.5]{boucheron2013concentration}
applied to $$X_{i, p} := \frac{1}{2}\frac{p(X_i) - p_0(X_i)}{p(X_i) +
  p_0(X_i)} - \E \frac{1}{2}
\frac{p(X_i)-p_0(X_i)}{p(X_i) + p_0(X_i)},$$ to deduce   
\begin{align*}
 \P \left\{ \sup_{p \in \Ps(\ell)} \int \frac{p_0 - p}{p_0 + p}
         d(P_0 - P_n) \geq \E(\ell) + u \right\} \leq 
         \exp \left(\frac{-n u^2}{8 \left(\E(\ell) + \frac{t^2}{2} + \frac{u}{6} \right)} \right)
\end{align*}
where
\begin{equation*}
  \E(\ell) := \E \sup_{p \in \Ps(\ell)} \int \frac{p_0 - p}{p_0 + p}
                 d(P_0 - P_n). 
\end{equation*}
Therefore
\begin{align*}
&     \P \left\{ \max_{\ell \in \lin} \sup_{p \in \Ps(\ell)} \int \frac{p_0 - p}{p_0 + p}
         d(P_0 - P_n) \geq \max_{\ell \in \lin} \E(\ell)  + u \right\} \\
 & \leq \sum_{\ell \in \lin} \exp \left(\frac{-n u^2}{8 \left(\E(\ell)
   + \frac{t^2}{2} + \frac{u}{6} \right)} \right)  \\
  &\leq |\lin|\exp \left(\frac{-n u^2}{8 \left(\max_{\ell \in \lin}\E(\ell)
   + \frac{t^2}{2} + \frac{u}{6} \right)} \right). 
\end{align*}
Integrating both sides of this inequality from $u = 0$ to $u =
\infty$, we obtain
\begin{align*}
&  \E \left(\max_{\ell \in \lin} \sup_{p \in \Ps(\ell)} \int \frac{p_0 - p}{p_0 + p}
         d(P_0 - P_n) - \max_{\ell \in \lin} \E(\ell) \right)_+ \\ &\leq
  \int_0^{\infty} \min \left\{|\lin|\exp \left(\frac{-n u^2}{8 \left(\max_{\ell \in \lin}\E(\ell)
   + \frac{t^2}{2} + \frac{u}{6} \right)} \right), 1 \right\} du
\end{align*}
where $x_+ := \max(x, 0)$. The trivial inequality $a \leq b + (a -
b)_+$ then gives
\begin{align*}
&  \E \max_{\ell \in \lin} \sup_{p \in \Ps(\ell)} \int \frac{p_0 - p}{p_0 + p}
  d(P_0 - P_n) \\ &\leq \max_{\ell \in \lin} \E(\ell) +   \int_0^{\infty} \min \left\{|\lin|\exp \left(\frac{-n u^2}{8 \left(\max_{\ell \in \lin}\E(\ell)
   + \frac{t^2}{2} + \frac{u}{6} \right)} \right), 1 \right\} du. 
\end{align*}
We now complete the proof of Lemma \ref{intchan} by showing that
\begin{align*}
&  \int_0^{\infty} \min \left\{|\lin|\exp \left(\frac{-n u^2}{8
  \left(\max_{\ell \in \lin}\E(\ell) 
   + \frac{t^2}{2} + \frac{u}{6} \right)} \right), 1 \right\} du \\ &\leq
  a \max_{\ell \in \lin} \E(\ell) + b t^2 + \frac{C(a, b)}{n} \log(e |\lin|)
\end{align*}
for every $a, b > 0$. The integral above is bounded by
\begin{align*}
  &\int_0^{a \max_{\ell \in \lin} \E(\ell) + b t^2} 1 du + \\
  & \int_{a
    \max_{\ell \in \lin} \E(\ell) + b t^2}^{\infty} \min \left\{ |\lin| \exp \left(\frac{-n u^2}{8 \left(\max_{\ell \in \lin}\E(\ell)
    + \frac{t^2}{2} + \frac{u}{6} \right)} \right), 1 \right\} du \\
  &\leq a \max_{\ell \in \lin} \E(\ell) + b t^2 + \int_{a
    \max_{\ell \in \lin} \E(\ell) + b t^2}^{\infty} \min \left\{ |\lin| \exp \left(\frac{-n u^2}{8 \left(\frac{u}{a}
    + \frac{u}{2b} + \frac{u}{6} \right)} \right), 1 \right\} du \\
  &= a \max_{\ell \in \lin} \E(\ell) + b t^2 + \int_{a
    \max_{\ell \in \lin} \E(\ell) + b t^2}^{\infty} \min \left\{ |\lin|\exp \left(\frac{-n u}{8 \left(\frac{1}{a}
    + \frac{1}{2b} + \frac{1}{6} \right)} \right), 1 \right\} du \\
  &\leq a \max_{\ell \in \lin} \E(\ell) + b t^2 + \int_{0}^{\infty}
    \min \left\{ |\lin|\exp \left(\frac{-n u}{K(a, b)} \right), 1 \right\} du
\end{align*}
where $K(a, b) := 8(\frac{1}{a} + \frac{1}{2b} +
\frac{1}{6})$. Letting $T := \frac{K(a, b)}{n} \log |\lin|$, we get
\begin{align*}
&  \int_0^{\infty} \min \left\{|\lin|\exp \left(\frac{-n u^2}{8 \left(\max_{\ell \in \lin}\E(\ell)
                 + \frac{t^2}{2} + \frac{u}{6} \right)} \right), 1 \right\} du \\
  &\leq a \max_{\ell \in \lin} \E(\ell) + b t^2 + \int_{0}^{\infty}
   \min \left\{ |\lin|\exp \left(\frac{-n u}{K(a, b)} \right), 1 \right\} du \\
  &= a \max_{\ell \in \lin} \E(\ell) + b t^2 + \int_{0}^{T}
  \min \left\{|\lin|\exp \left(\frac{-n u}{K(a, b)} \right), 1
    \right\} du \\ &+ \int_T^{\infty}\min \left\{|\lin|\exp \left(\frac{-n u}{K(a, b)} \right), 1
    \right\} du \\
  &\leq a \max_{\ell \in \lin} \E(\ell) + b t^2 + T +    |\lin| \int_T^{\infty}
    \exp \left(\frac{-n u}{K(a, b)} \right) du \\
  &=  a \max_{\ell \in \lin} \E(\ell) + b t^2 + T +    |\lin|
    \frac{K(a, b)}{n} \exp \left(-\frac{nT}{K(a, b)} \right) \\
  &= a \max_{\ell \in \lin} \E(\ell) + b t^2 + \frac{K(a, b)}{n} \log
    |\lin| + \frac{K(a, b)}{n} \\ &= a \max_{\ell \in \lin} \E(\ell) + b
    t^2 + \frac{K(a, b)}{n} \log \left(e|\lin| \right). 
\end{align*}
Now we take
\begin{equation*}
  b = \frac{2}{t} \sqrt{\frac{\log (e |\lin|)}{n}} 
\end{equation*}
to finish the proof of Lemma \ref{intchan}. 

\end{proof}

The following result was used in the proof of Lemma \ref{ESupResult}. 

\begin{lemma}\label{simpleintegral}
  For every $q > 0$, there exists a positive constant $C_q$ such that
  for every $0 < s \leq B$, the following inequality holds:
  \begin{equation}\label{simpleintegral.eq}
    \int_0^s  \left(\log \frac{B}{\epsilon}
      \right)^{q/2} \sqrt{\frac{B}{\epsilon}} d\epsilon \leq C_q
      \sqrt{s B} \left(\log \frac{eB}{s} \right)^{q/2}. 
  \end{equation}
\end{lemma}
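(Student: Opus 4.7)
The plan is to reduce the integral to a standard one by a scaling substitution, then handle the resulting integral with an exponential change of variables that converts the logarithm into a shift. First, I would substitute $\epsilon = Bu$, so that $d\epsilon = B\,du$, and the inequality to prove becomes
\begin{equation*}
B\int_0^{s/B} \left(\log \tfrac{1}{u}\right)^{q/2} u^{-1/2}\,du
\;\leq\; C_q\,\sqrt{sB}\left(\log \tfrac{eB}{s}\right)^{q/2}.
\end{equation*}
Writing $v := s/B \in (0,1]$, this is equivalent to showing
$\int_0^{v}(\log(1/u))^{q/2} u^{-1/2}\,du \leq C_q\sqrt{v}(\log(e/v))^{q/2}$, so the original problem reduces to a purely dimensionless one.

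Next, I would apply the substitution $u = v e^{-t}$ (i.e., $t = \log(v/u) \geq 0$ for $u \in (0,v]$), which gives $du = -v e^{-t}\,dt$, $u^{-1/2} = v^{-1/2}e^{t/2}$, and $\log(1/u) = \log(1/v) + t$. The integral becomes
\begin{equation*}
\sqrt{v}\int_0^{\infty} \bigl(a + t\bigr)^{q/2} e^{-t/2}\,dt, \qquad \text{where } a := \log(1/v) \geq 0.
\end{equation*}
This is the decisive step: the logarithmic factor has been turned into a shift of a Gaussian-tail-like integral.

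To finish, I would bound $(a+t)^{q/2} \leq C_q\bigl(a^{q/2} + t^{q/2}\bigr)$ (a standard inequality depending only on $q$), so that
\begin{equation*}
\int_0^{\infty}(a+t)^{q/2}e^{-t/2}\,dt \;\leq\; C_q\Bigl(a^{q/2}\cdot 2 + \int_0^{\infty} t^{q/2} e^{-t/2}\,dt\Bigr) \;\leq\; C_q\bigl(a^{q/2} + 1\bigr),
\end{equation*}
where the remaining $t$-integral is a finite gamma-function value depending only on $q$. Since $a \geq 0$, the inequality $a^{q/2} + 1 \leq 2(a+1)^{q/2}$ holds for $q \geq 0$, and $a+1 = \log(e/v)$. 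Combining everything and unwinding the substitutions gives $\int_0^{s}(\log(B/\epsilon))^{q/2}\sqrt{B/\epsilon}\,d\epsilon \leq C_q\sqrt{sB}(\log(eB/s))^{q/2}$, as required.

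There is no real obstacle: the only care required is choosing the right substitution ($\epsilon = Bu$ followed by $u = ve^{-t}$) so that both the $B$-dependence and the logarithm factor become explicit and separable. Once that is done, the remaining estimate is just the elementary bound on $\int_0^\infty (a+t)^{q/2}e^{-t/2}dt$ in terms of $(a+1)^{q/2}$, which is routine.
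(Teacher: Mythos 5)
Your proof is correct, and it takes a genuinely cleaner route than the paper's. Both arguments begin by substituting so that $\log(B/\epsilon)$ becomes linear: the paper sets $y = \tfrac12\log(B/\epsilon)$, arriving at $I = B\,2^{q/2+1}\int_{\alpha_0}^\infty e^{-y}y^{q/2}\,dy$ with $\alpha_0 = \tfrac12\log(B/s)$, and then bounds this tail integral by splitting into the cases $\alpha_0\le 1$ and $\alpha_0>1$, using repeated integration by parts in the latter case to peel the polynomial factor off the exponential. Your nested substitution ($\epsilon = Bu$, then $u = v e^{-t}$) recenters the lower limit at zero, so that the shift $a=\log(B/s)$ appears inside the power factor rather than as an integration endpoint; the unified bound $(a+t)^{q/2}\le C_q(a^{q/2}+t^{q/2})$ together with $a^{q/2}+1\le 2(a+1)^{q/2}$ then closes the argument with no case split and no integration by parts. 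The two routes are equivalent up to the shift $y=\alpha_0+z$, but yours dispenses with the bookkeeping and is more elementary; what the paper's version buys in exchange is nothing essential here.
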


\begin{proof}[Proof of Lemma \ref{simpleintegral}]
  Let $I$ denote the integral on the left hand side of
  \eqref{simpleintegral.eq}. By the change of variable $y =
  \frac{1}{2} \log   \frac{B}{\epsilon}$, we get
  \begin{equation}\label{changedvar.eq}
I =  B2^{(q/2)+1} \int_{\alpha_0}^{\infty} e^{-y} y^{q/2} dy \qt{where
  $\alpha_0  := \frac{1}{2} \log \frac{B}{s}$}.  
\end{equation}
We separately consider the two cases $\alpha_0 \leq 1$  and
$\alpha_0 > 1$. When $\alpha_0 \leq 1$,
\begin{align*}
  I &\leq B 2^{(q/2) + 1} \int_0^{\infty} e^{-y} y^{q/2} dy \\
  &= B 2^{(q/2) + 1} \left[\int_0^{\infty} e^{-y} y^{q/2} dy \right] e
    e^{-\alpha_0} \\
  &\leq \sqrt{s B} C_q \qt{provided $C_q \geq  2^{(q/2) + 1}
    \left[\int_0^{\infty} e^{-y} y^{q/2} dy \right] e$} \\
  &\leq \sqrt{s B} C_q \left(\log \frac{eB}{s} \right)^{q/2}. 
\end{align*}
When $\alpha > 1$, let $v$ be the smallest positive integer that is
strictly greater than $q/2$. Integration by parts $v$ times in
\eqref{changedvar.eq} gives
\begin{equation*}
  I \leq C_q B \alpha_0^{q/2}  e^{-\alpha_0} + C_q B
  \int_{\alpha_0}^{\infty} e^{-y} y^{(q/2) - v} dy
\end{equation*}
for some constant $C_q$. As $q/2 < v$, the second integral is bounded
from above by $\int_{\alpha_0}^\infty e^{-y} dy = e^{-\alpha_0} \leq
\alpha_0^{q/2} e^{-\alpha_0}$. We thus obtain
\begin{equation*}
  I \leq C_q B \alpha_0^{q/2} e^{-\alpha_0} = C_q \sqrt{sB}
  \left(\frac{1}{2} \log \frac{B}{s} \right)^{q/2} \leq C_q \sqrt{sB}
  \left(\frac{1}{2} \log \frac{eB}{s} \right)^{q/2}
\end{equation*}
which completes the proof of \eqref{simpleintegral.eq}. 
\end{proof}

\subsection{Proofs of Claims in Remark \ref{domain}}\label{domain_claims}
\begin{proof}[Proof for $\tilde p_0$ begin a SMU density in Remark \ref{domain}]
First, we can easily check that $\tilde p_0(u_1, \ldots, u_d)$ is nonnegative and integrates up to 1. Indeed, 
\begin{align*}
\int_0^1 \ldots \int_0^1 &\tilde p_0(u_1, \ldots, u_d) du_1 \ldots du_d \\
&=\int_0^1 \ldots \int_0^1 p_0(u_1M_1, \ldots, u_d M_d) M_1 \ldots M_d du_1 \ldots du_d \\
&= \int_0^{M_1} \ldots \int_0^{M_d} p_0(x_1, \ldots, x_d) dx_1 \ldots dx_d=1.
\end{align*}
Now we show that $\tilde p_0$ has a form of scale mixtures of uniform densities. By definition of $p_0$ being the SMU density, we represent 
\[
p_0(u_1, \ldots, u_d) = \int_0^\infty \ldots \int_0^\infty I(0\leq u_1 \leq \theta_1) \ldots I(0\leq u_d \leq \theta_d) dG(\theta_1, \ldots, \theta_d).
\]
Then, by the definition of $\tilde p_0$ followed by the change of variables (letting $\tau_i = \theta_i/M_i$),
\begin{align*}
&\tilde p_0(u_1, \ldots, u_d) = p_0(u_1M_1, \ldots, u_d M_d) M_1 \ldots M_d\\
&=\int_0^\infty \ldots \int_0^\infty I(0\leq u_1M_1 \leq \theta_1) \ldots I(0\leq u_d M_d \leq \theta_d)  M_1 \ldots M_d dG(\theta_1, \ldots, \theta_d) \\
&=\int_0^\infty \ldots \int_0^\infty I(0\leq u_1 \leq \tau_1) \ldots I(0\leq u_d \leq \tau_d)  M_1 \ldots M_d dG(M\tau_1, \ldots, M\tau_d) \\
&=\int_0^\infty \ldots \int_0^\infty I(0\leq u_1 \leq \tau_1) \ldots I(0\leq u_d \leq \tau_d)  d \tilde G_M(\tau_1, \ldots, \tau_d) 
\end{align*}
by defining a new probability measure $\tilde G_M(\tau_1, \ldots, \tau_d) := G(M_1\tau_1, \ldots, M_d\tau_d)$. This shows the claim.
\end{proof}
    
\begin{proof}[Proof of \eqref{mle_scaling} in Remark \ref{domain}] We denote the class of scale mixture of uniform (SMU) densities by $\mathcal{P}$.
With the original data $X_1, \ldots, X_n$ having a density $p_0 \in \mathcal{P}$, we maximize the log likelihood $\sum_{i=1}^n \log p(X_{i1}, \ldots, X_{id})$ over the SMU class $\mathcal{P}$. 
%Since we assume that the true density $p_0$ is included in $\mathcal{P}$, the maximizer is also included in $\mathcal{P}$. 
Now we consider the scaled data $X_1/M_1, \ldots, X_d/M_d$. Since $X_1, \ldots, X_n$ is from $p_0$, we know that $X_1/M_1, \ldots, X_d/M_d$ are from a density $\tilde p_0 (u_1, \ldots, u_d) = p_0(u_1M_1, \ldots, u_dM_d) M_1 \ldots M_d$ and we showed that $\tilde p_0 \in \mathcal{P}$ above. Since $p_0$ and $\tilde p_0$ have such correspondence, $\mathcal{P}$ is transformed as
\[
\mathcal{Q} = \{ q(u_1, \ldots, u_d) =  p(u_1M_1, \ldots, u_d M_d)M_1\ldots M_d | p \in \mathcal{P}\}.
\]
For every $p \in \mathcal{P}$, by considering 
\begin{align*}
\sum_{i=1}^n \log p(X_i) &= \sum_{i=1}^n \log p(M_1\tilde X_{i1}, \ldots, M_d \tilde X_{id}) \\
&= \sum_{i=1}^n \log q(\tilde X_{i1}, \ldots, \tilde X_{id}) - n\log (M_1\ldots M_d),
\end{align*}
maximizing the likelihood using original data over $p \in \mathcal{P}$ must be equivalent to maximizing $q \in\mathcal{Q}$ using the scaled data. This shows 
\[
\tilde p_{n,d}^{\text{SMU}} (u_1,\ldots, u_d) = \hat p_{n,d}^{\text{SMU}}(u_1M_1, \ldots, u_dM_d) M_1 \ldots M_d.
\]
\end{proof}

%%%%%%%%%%%%%%%%%%%%%%%%%%%%%%%%%%%%%%%%%%%%%%
%% Acknowledgements                         %%
%% should be provided in the                %%
%% Acknowledgements section.                %%
%%%%%%%%%%%%%%%%%%%%%%%%%%%%%%%%%%%%%%%%%%%%%%
\begin{acks}[Acknowledgments]
We are grateful to Frank Fuchang Gao for many helpful discussions about the results in \cite{gao2013book}. We also thank the two anonymous reviewers for their insightful comments, which greatly improved the paper’s content and presentation. 
\end{acks}

%%%%%%%%%%%%%%%%%%%%%%%%%%%%%%%%%%%%%%%%%%%%%%
%% Funding information, if any,             %%
%% should be provided in the                %%
%% funding section.                         %%
%%%%%%%%%%%%%%%%%%%%%%%%%%%%%%%%%%%%%%%%%%%%%%
\begin{funding}
Arlene K. H. Kim was supported by the National Research Foundation of
Korea (NRF) grant funded by the Korea government (MSIT)
(No. RS-2023-00219212 and RS-2023-NR076465). 

\noindent Adityanand Guntuboyina was supported by the National Science
Foundation (NSF) grant DMS-2210504.
\end{funding}

\bibliographystyle{imsart-number} % Style BST file (imsart-number.bst or imsart-nameyear.bst)
\bibliography{AG}       % Bibliography file (usually '*.bib')

%% or include bibliography directly:
%\begin{thebibliography}{9}
%
%\bibitem{r1}
%\textsc{Billingsley, P.} (1999). \textit{Convergence of
%Probability Measures}, 2nd ed.
%Wiley, New York.
%\MR{1700749}

%\bibitem{r2}
%\textsc{Bourbaki, N.}  (1966). \textit{General Topology}  \textbf{1}.
%Addison--Wesley, Reading, MA.

%\bibitem{r3}
%\textsc{Ethier, S. N.} and \textsc{Kurtz, T. G.} (1985).
%\textit{Markov Processes: Characterization and Convergence}.
%Wiley, New York.
%\MR{838085}

%\bibitem{r4}
%\textsc{Prokhorov, Yu.} (1956).
%Convergence of random processes and limit theorems in probability
%theory. \textit{Theory  Probab.  Appl.}
%\textbf{1} 157--214.
%\MR{84896}
%\end{thebibliography}

\end{document}